\newtheorem{thm}{Theorem}[section]
\newtheorem{lem}[thm]{Lemma}
\newtheorem{cor}[thm]{Corollary}
\newtheorem{prop}[thm]{Proposition}
\theoremstyle{definition}
\newcommand{\bi}{\begin{itemize}}
\newcommand{\ei}{\end{itemize}}
\newcommand{\be}{\begin{enumerate}}
\newcommand{\ee}{\end{enumerate}}
\newcommand{\bc}{\begin{center}}
\newcommand{\ec}{\end{center}}
\newcommand{\bt}{\begin{tabular}}
\newcommand{\et}{\end{tabular}}
\newcommand{\ba}{\begin{array}}
\newcommand{\ea}{\end{array}}
\newcommand{\N}{\mathbb N}
\newcommand{\ff}{F}
\newcommand{\quot}{/\hspace{-1mm}/}
\def\ident{e}
\def\lcm{\operatorname{lcm}}
\newcommand{\GalJanHNN}{MR2003149}
\newcommand{\Bass}{MR1239551}
\newcommand{\Serre}{MR1954121}
\newcommand{\ShalomW}{ShalomW}
\newcommand{\Patty}{MR1199571}
\newcommand{\WillStructure}{MR1299067}
\newcommand{\WillTidy}{MR2052362}
\newcommand{\Moller}{MR1913920}
\newcommand{\HewittRoss}{MR551496}
\newcommand{\Tzanev}{MR2015025}
\newcommand{\Schlichting}{MR583752}
\newcommand{\GlockWillis}{MR2032838}
\newcommand{\ElderA}{MR2776987}
\newcommand{\ElderB}{MR2142503}
\newcommand{\ElderC}{MR2126733}
\newcommand{\ElderEEO}{EEO}
\newcommand{\WillFurther}{MR1813900}
\newcommand{\LS}{MR0577064}
\newcommand{\vanD}{MR1556954}
\newcommand{\ColinProfinite}{ColinProfinite}
\newcommand{\BurgerMozes}{MR1839488}
\newcommand{\RibesZ}{MR2599132}
\newcommand{\BaWi}{MR2085717}
\newcommand{\HG}{MR2243237}
\begin{document}
%%%%%%%%%%%%%%%%

\title[Totally disconnected  Baumslag-Solitar groups]{Totally disconnected groups from Baumslag-Solitar groups}
%%ALT TITLE    
%\title[Embedding Baumslag-Solitar groups]{Embedding Baumslag-Solitar groups into totally disconnected locally compact groups}
\author{Murray Elder}
\address{School of Mathematical and Physical Sciences, 
The University of Newcastle,
Callaghan NSW 2308, Australia}
\email{Murray.Elder@newcastle.edu.au}
\author{George Willis}
\address{School of Mathematical and Physical Sciences, 
The University of Newcastle,
Callaghan NSW 2308, Australia}
\email{George.Willis@newcastle.edu.au}
\keywords{Totally disconnected locally compact group; commensurated subgroup; 
Baumslag-Solitar group; scale; minimizing subgroup; flat rank}
\subjclass[2010]{22D05; 20F65}
\date{\today}
\thanks{
Research supported by the Australian Research Council projects  DP0984342, DP120100996  and FT110100178}

\begin{abstract}
For each Baumslag-Solitar group $\mathrm{BS}(m,n)$ ($m,n\in \mathbb{Z}\setminus\{0\}$), a totally disconnected, locally compact group, $G_{m,n}$,
is constructed so that $\mathrm{BS}(m,n)$ is identified with a dense subgroup of $G_{m,n}$. The scale function on $G_{m,n}$, a structural invariant for the topological group, is seen to distinguish the parameters $m$ and $n$ to the extent that the set of scale values is 
$$
\left\{  \left(\frac{\lcm(m,n)}{|m|}\right)^{\rho}, \left(\frac{\lcm(m,n)}{|n|}\right)^{\rho} \ \vert \ \rho\in \N\right\}.
$$
It is also shown that $G_{m,n}$ has flat rank 1 when $|m|\neq  |n|$ and 0 otherwise, and that $G_{m,n}$ has a compact, open subgroup isomorphic to the product  $\left\{(\mathbb{Z}_p,+) \mid p\hbox{ is a prime divisor of the scale}\right\}$.
\end{abstract}

\maketitle

\section{Introduction}
\label{sec:intro}

In this article we investigate a family of totally disconnected,   
locally compact groups $G_{m,n}$ built from  Baumslag-Solitar groups, which are the groups  $\mathrm{BS}(m,n)$ with presentations
$\displaystyle  \left\langle a,t \ | \ ta^mt^{-1}=a^n\right\rangle$
for  non-zero integers $m$ and $n$.

Recall that a topological group $G$ is a group that is also a topological space such that  the map $(x,y)\mapsto xy^{-1}:G\times G\rightarrow G$ 
 is continuous. 
 A topological space is {\em Hausdorff} if for any two points $x,y$ there are disjoint open sets ${U},{V}$, with $x\in {U}$ and $y\in {V}$,
  {\em  totally disconnected} if the connected component containing any point is just the point itself, or equivalently that for any two points $x,y$ there are disjoint open sets ${U},{V}$, with $x\in {U}$ and $y\in {V}$ and whose union is the whole space, and  {\em locally compact} if each point has a compact neighbourhood. The study of locally compact groups naturally splits into the study of connected and totally disconnected groups, since for every locally compact group $G$, the connected component $G_0$ of the identity is a closed normal subgroup, and $G/G_0$ is totally disconnected.

The
 subgroup $\left\langle a\right\rangle$ is 
{\em commensurated} by $\mathrm{BS}(m,n)$, and we follow a construction described in \cite{\GlockWillis, \ColinProfinite, \Schlichting, \ShalomW, \Tzanev}  that uses a commensurated subgroup of an abstract group to construct  a totally disconnected, locally compact group 
that is the completion of the abstract group with respect to a topology defined on it in terms of cosets of the commensurated subgroup.
The group thus obtained from $\mathrm{BS}(m,n)$ is denoted $G_{m,n}$.

In \cite{\GalJanHNN} Gal and Januszkiewicz prove that Baumslag-Solitar groups are a-T-menable, by embedding them into the  topological group of automorphisms of their corresponding {\em Bass-Serre} tree. The closure of $\mathrm{BS}(m,n)$ in this group coincides with the group $G_{m,n}$ we construct.

A key invariant of a totally disconnected locally compact group is the set of positive integers corresponding to the {\em scales} of its elements, which we define in Section \ref{sec:scale}.
The scale function and the related notion  of minimizing subgroups were introduced by the second author \cite{\WillStructure, \WillFurther} and fill a similar
role in the theory of totally disconnected groups to that played by
eigenvalues and triangularising bases in linear algebra and the
theory of matrix groups.

 We prove that the set of scales for the group $G_{m,n}$ for $m,n\neq 0$  is $$\displaystyle \left\{  \left(\frac{\lcm(m,n)}{|m|}\right)^{\rho}, \left(\frac{\lcm(m,n)}{|n|}\right)^{\rho} \ \vert \ \rho\in \N\right\}.$$ 
The scale thus partially distinguishes the parameters in the definition of $\mathrm{BS}(m,n)$. In particular, for each pair of relatively prime integers $m,n>0$ we obtain a distinct nondiscrete totally disconnected group. 
We offer two alternative proofs of this result: a combinatorial approach is given in Section \ref{sec:compute}; while in Section \ref{sec:local} the {\em tidying procedure}  developed by the second author is used.
%We include both perspectives to appeal to readers of different persuasions.

Another key invariant of totally disconnected locally compact groups  is their {\em flat rank} (defined in Section \ref{sec:flatRank}). %In \cite{\HypFlatRank} it is shown  that the flat rank of a hyperbolic group (as a  topological group) is at most 1. 
Here we show that  $G_{m,n}$ has flat rank 1 for all $|m|\neq |n|$.

%In \cite{\GalJanHNN} Gal and Januszkiewicz prove that Baumslag-Solitar groups are a-T-menable, by embedding them into the  topological group of automorphisms of their corresponding {\em Bass-Serre} tree. We thank Yves de Cornulier for point this out to us.
%In the last section of this paper we will briefly recall the construction and  show that this group coincides with $G_{m,n}$. 

%It turns out there is more than one way to embed $\mathrm{BS}(m,n)$ into a totally disconnected locally compact group. As a graph of groups, $\mathrm{BS}(m,n)$ acts on its {|em Bass-Serre tree}, which is an $(m+n)$-regular tree  
% \cite{\Bass, \Serre}. The group of automorphisms of a tree with bounded vertex degrees can be given a locally compact, totally disconnected topology, and so $\mathrm{BS}(m,n)$ is a tdlc group with the subspace topology.
%Gal and Januszkiewicz
%used this to prove that Baumslag-Solitar groups are a-T-menable \cite{\GalJanHNN}. We thank Yves de Cornulier for point this out to us.
%also called the Haagarup property
%In the last section of the paper we will briefly recall the construction and show that while this group is countable (and the group constructed by the previous method is uncountable), the scales of elements are the same.

The article is organised as follows.  In Sections \ref{sec:scale}--\ref{sec:comm} we define scale, Baumslag-Solitar groups and commensurated subgroups, and list   properties that will be used later. In Section \ref{sec:construction} we outline the construction of a totally disconnected group starting with an abstract group having a commensurated subgroup, which we apply to Baumslag-Solitar groups.
In Section \ref{sec:compute} we compute the scale function for $G_{m,n}$. In Section \ref{sec:flatRank} we compute a formula for the {\em modular function} for $G_{m,n}$,  and compute  its  flat rank. In Section  \ref{sec:local} we give more detail on the local structure of $G_{m,n}$, describing its compact open subgroups explicitly, and recompute the modular and scale functions  with this description. %In the final section we construct $G_{m,n}$ as the closure of $\mathrm{BS}(m,n)$ in the automorphism group of the corresponding Bass-Serre tree, and briefly indicate the computation of scales.

Note that throughout this paper $\N$ will denote the set of nonnegative integers (including 0).

 The authors wish to thank  Mathieu Carette and Yves de Cornulier for suggestions and corrections to earlier drafts.

\section{The scale function}
  \label{sec:scale}
  Let $G$ be a totally disconnected locally compact group. By 
 van Dantzig's Theorem (see \cite{\vanD} or \cite{\HewittRoss} Theorem 7.7),   every neighourhood of the identity contains a compact open subgroup, $V$ say.  An automorphism of $G$ is a group automorphism $\alpha:G\rightarrow G$  that is also a topological homeomorphism, meaning $\alpha$ and $\alpha^{-1}$ are continuous.
If $\alpha$ is an automorphism and $V$ is a compact open subgroup, 
 the set  $\alpha^{-1}(V)$ is compact %(the continuous image of a compact set is compact) 
 and open% (the inverse image of an open set under $\alpha^{-1}$ is open)
 .  The cosets of the subgroup 
 $\alpha^{-1}({V})\cap {V}$  form an open cover of  ${V}$, and since ${V}$ is compact, the index $[V:  \alpha^{-1}(V)\cap V]$ is finite. 
 Define the \emph{scale} of $\alpha$, denoted $s(\alpha)$, to be the minimum such
index over all compact open subgroups ${V}$ of $G$. Then $s(\alpha)$
is the minimum of a set of positive integers and $s:\mathrm{Aut}(G)\rightarrow \mathbb Z^+$ is a well defined function.  A
subgroup ${V}$ for which $s(\alpha)$ is attained is called {\em minimizing} for
$\alpha$. 

In the case that $\alpha=\alpha_x:g\mapsto xgx^{-1}$ is an inner automorphism,
the scale function induces a function from the group to 
$\mathbb Z^+$, also denoted by $s$, which enjoys the 
 the following properties.
\begin{prop}[\cite{\WillStructure,\WillFurther}]\label{prop:scale_properties}
Let $s:G\rightarrow \mathbb Z^+$ be the scale function on $G$. Then 
\begin{enumerate}
\item[$(i)$] $s$ is continuous;
\item[$(ii)$]    for each $x\in G$ and $n\in\N$, 
$s(x^n)=s(x)^n$;
\item[$(iii)$]  if ${V}$ is minimising for $x$ then ${V}$ is minimising for $x^i$ for all $i\in \mathbb Z$;
\item[$(iv)$]  
$s$  is invariant under conjugation, that is,  $s(x)=s(yxy^{-1})$ for any $x,y\in G$;
\item[$(v)$] $s(x)=s(x^{-1})=1$ if and only if there is a compact open subgroup $V$ with $x^{-1}Vx=V$.  

\end{enumerate}
\end{prop}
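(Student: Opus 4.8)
The plan is to reduce all five statements to the structure theory of \emph{tidy} subgroups for an automorphism $\alpha$ of $G$, applied throughout to the inner automorphism $\alpha=\alpha_x$. For a compact open subgroup $V$ set $V_+=\bigcap_{n\geq 0}\alpha^n(V)$ and $V_-=\bigcap_{n\geq 0}\alpha^{-n}(V)$, and call $V$ \emph{tidy} for $\alpha$ if $V=V_+V_-$ and $\bigcup_{n\geq 0}\alpha^n(V_+)$ is closed. The two facts I would establish first, and which constitute the real work, are: (A) every $\alpha$ admits a tidy $V$, obtained from an arbitrary compact open subgroup by the tidying procedure; and (B) a compact open subgroup is minimising for $\alpha$ exactly when it is tidy, in which case, since $\alpha(V_+)=\bigcap_{n\geq 1}\alpha^n(V)\supseteq V_+$ gives the nested chain $V_+\subseteq\alpha(V_+)\subseteq\alpha^2(V_+)\subseteq\cdots$, one has the index formula $s(\alpha)=[\alpha(V_+):V_+]$. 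These are precisely the results of the cited papers, and they are the main obstacle; granting them, the five properties are short deductions.

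Property $(iv)$ is elementary and uses no tidy theory. If $\beta$ is any topological automorphism then $\beta$ permutes the compact open subgroups and preserves indices, so $(\beta\alpha\beta^{-1})^{-1}(\beta(V))\cap\beta(V)=\beta(\alpha^{-1}(V))\cap\beta(V)=\beta(\alpha^{-1}(V)\cap V)$, whence $[\beta(V):(\beta\alpha\beta^{-1})^{-1}(\beta(V))\cap\beta(V)]=[V:\alpha^{-1}(V)\cap V]$; minimising over $V$ gives $s(\beta\alpha\beta^{-1})=s(\alpha)$, and taking $\alpha=\alpha_x$, $\beta=\alpha_y$ yields $s(yxy^{-1})=s(x)$. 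The reverse implication of $(v)$ is equally direct: if $x^{-1}Vx=V$ then $\alpha_x^{-1}(V)\cap V=V$, so the index is $1$ and $s(x)=1$, and symmetrically $s(x^{-1})=1$, using that the scale is always at least $1$.

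For $(iii)$ and $(ii)$ I would first check that a subgroup tidy for $\alpha$ is tidy for every power $\alpha^i$: the decomposition $V=V_+V_-$ and the closedness condition are symmetric under $\alpha\leftrightarrow\alpha^{-1}$ and pass to $\alpha^i$. By $(B)$ a minimising $V$ for $x$ is then minimising for each $x^i$, which is $(iii)$. For $(ii)$, apply the index formula to $\alpha^n=\alpha_{x^n}$ with this same tidy $V$: the nested chain gives $[\alpha^n(V_+):V_+]=\prod_{j=0}^{n-1}[\alpha^{j+1}(V_+):\alpha^j(V_+)]=[\alpha(V_+):V_+]^n$, since each $\alpha^j$ is an automorphism and preserves the index. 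Hence $s(x^n)=s(\alpha^n)=[\alpha(V_+):V_+]^n=s(x)^n$.

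For continuity $(i)$ it suffices, as $s$ is integer-valued, to show $s$ is locally constant, and I would prove the sharper statement that $s$ is constant on the coset $xV$ whenever $V$ is tidy for $x$. The key computation is that for $v\in V$ the functional is unchanged: since $v^{-1}Vv=V$, one has $(xv)^{-1}V(xv)\cap V=v^{-1}(x^{-1}Vx\cap V)v$, which has the same index in $V$ as $x^{-1}Vx\cap V$; combined with the fact (again part of the structure theory) that $V$ remains tidy, hence minimising, for $xv$, this gives $s(xv)=s(x)$ for all $v\in V$, and $xV$ is an open neighbourhood of $x$. Finally, the forward implication of $(v)$: if $s(x)=s(x^{-1})=1$, choose $V$ tidy for $x$, hence for $x^{-1}$; the index formula forces $[\alpha_x(V_+):V_+]=1$ and $[\alpha_x^{-1}(V_-):V_-]=1$, so $V_+$ and $V_-$ are both $\alpha_x$-invariant, and therefore $\alpha_x(V)=\alpha_x(V_+)\alpha_x(V_-)=V_+V_-=V$, that is $x^{-1}Vx=V$. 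As already noted, the only substantial step is $(A)$ together with $(B)$; the rest is bookkeeping with the index formula on tidy subgroups.
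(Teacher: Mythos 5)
The paper does not prove this proposition at all: it is imported verbatim from the cited references \cite{\WillStructure,\WillFurther}, so there is no internal proof to compare against. Your reduction to tidy-subgroup theory is precisely the approach of those cited papers (and is the same machinery this paper itself recalls in Section \ref{sec:local}, where tidiness, the decomposition $V=V_+V_-$ and the index formula $s(x)=[xV_+x^{-1}:V_+]$ are restated), and your deductions of $(i)$--$(v)$ from facts (A) and (B) are sound: the conjugation computation for $(iv)$, the trivial direction of $(v)$, the telescoping index argument for $(ii)$, and the forward direction of $(v)$ via $\alpha(V_+)=V_+$, $\alpha(V_-)=V_-$ are all correct.

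Two points deserve emphasis so that you see where the weight really sits. First, in $(ii)$ you silently use that the $\alpha^n$-plus-part of $V$ coincides with $V_+=V_+(\alpha)$; this is true for tidy $V$ (it follows from the lemma $V\cap\alpha^j(V)=V_+\,\alpha^j(V_-)$) but is part of the ``tidy for $\alpha$ implies tidy for $\alpha^n$'' theorem rather than a symmetry observation, so your one-line justification of $(iii)$ understates it. Second, and more importantly, your continuity argument hinges on the fact that $V$ remains tidy, hence minimising, for every $y\in xV$; you correctly flag this as a result of the structure theory, and it cannot be dropped or replaced by bookkeeping. Indeed, the coset computation alone only yields $s(y)\leq [V:y^{-1}Vy\cap V]=s(x)$, i.e.\ upper semicontinuity, and for a non-tidy $V$ the scale genuinely fails to be constant on $xV$: in $\mathrm{Aut}(T_3)$ with $x$ a translation of length $1$ and $V$ the stabiliser of a vertex on the axis, suitable $v\in V$ make $xv$ an edge inversion, so $s(xv)=1$ while $s(x)=2$. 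So your proposal is correct as a proof modulo the cited theorems, which is exactly the status the proposition has in the paper.
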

Note that if $G$ has the discrete topology then the scale of any element is 1, since ${V}=\{1\}$ is  open and compact (and obviously for each $x\in G, x^{-1}\{1\}x=\{1\}$).

The continuity of the scale function implies the following.
\begin{cor}\label{cor:dense}
If $B$ is a dense subset of a totally disconnected locally compact group $G$, then $\{s(b)\ | \ b\in B\}= \{s(g)\ | \ g\in G\}$.
\end{cor}
\begin{proof}
If $g\in G$ has scale $s(g)=n$, then the inverse image $U$ of the open set $\{n\}$ in $\mathbb Z^+$ under the scale function is an open set in $G$. 
Since $B$ is dense in $G$, every open set contains points from $B$, so there is a point $b\in U\cap B$ with $s(b)=n$.
\end{proof}

We
 will make use of an asymptotic formula of M\"oller which makes it possible to use arbitrary compact open subgroups  to calculate the scale function.
\begin{thm}[\cite{\Moller} Theorem 7.7]\label{thm:Moller}
For any compact open subgroup ${V}$,
\begin{equation*}
s(x)=\lim_{k\rightarrow \infty} \left[{V} : {V}\cap
x^{-k}{V}x^k\right]^{\frac1{k}}.\end{equation*} \end{thm}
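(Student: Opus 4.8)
The plan is to reduce the statement for an arbitrary compact open subgroup $V$ to the case of a \emph{minimizing} subgroup, for which the relevant indices can be computed exactly. Write $q_W(k) = [W : W\cap x^{-k}Wx^k]$ for any compact open subgroup $W$. If $V_0$ is minimizing for $x$ (one exists because $s(x)$ is, by definition, the attained minimum of a set of positive integers), then by Proposition \ref{prop:scale_properties}(iii) it is also minimizing for every power $x^k$; hence, by the definition of the scale, $q_{V_0}(k) = s(x^k)$, and by part (ii) this equals $s(x)^k$. Thus the asserted formula holds exactly, with no limit required, for a minimizing subgroup. It therefore suffices to show that the limiting growth rate of $q_V(k)$ is insensitive to the choice of compact open subgroup.

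The engine for this is a bounded-distortion comparison between commensurable compact open subgroups, resting on an elementary injection principle: for any subgroup $B \le V$ and any subgroup $C \le G$, the natural map $B/(B\cap C) \to V/(V\cap C)$ is injective, so $[B : B\cap C] \le [V : V\cap C]$. Now suppose $U \le V$ with $[V:U] = d < \infty$, fix $k$, and set $C = x^{-k}Vx^k$ and $C' = x^{-k}Ux^k$, so that $C' \le C$ with $[C:C'] = d$. Splitting the indices $[U : U\cap C']$ and $[V : U\cap C]$ into their composition factors along the towers $U \le V$ and $C' \le C$, and applying the injection principle to each factor, yields the two-sided bound
\begin{equation*}
\tfrac{1}{d}\, q_U(k) \le q_V(k) \le d\, q_U(k).
\end{equation*}
The crucial feature is that the constant $d$ is independent of $k$.

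To conclude, take $U = V \cap V_0$, which is compact and open and has finite index $d_1 = [V:U]$ in $V$ and $d_2 = [V_0:U]$ in $V_0$ (finiteness follows from compactness of $V$, $V_0$ and openness of $U$). Applying the comparison twice produces a constant $c = d_1 d_2$ with
\begin{equation*}
\tfrac{1}{c}\, s(x)^k \le q_V(k) \le c\, s(x)^k,
\end{equation*}
using $q_{V_0}(k) = s(x)^k$ from the first paragraph. Taking $k$-th roots and letting $k \to \infty$, the factors $c^{\pm 1/k} \to 1$, so $q_V(k)^{1/k}$ is squeezed between two sequences converging to $s(x)$; this simultaneously shows the limit exists and equals $s(x)$.

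I expect the main obstacle to be the comparison step, and within it the upper bound $q_V(k) \le d\, q_U(k)$: unlike the lower bound, it requires routing through the intermediate index $[V : U\cap C]$ and controlling the correction factor $[V\cap C : U\cap C]\ge 1$, which is exactly where the injection principle must be invoked a second time. As an alternative route to the \emph{existence} of the limit one could instead show directly that $k \mapsto q_V(k)$ is submultiplicative—again via the injection principle, using the identity $x^j(V \cap x^{-j}Vx^j)x^{-j} = V \cap x^jVx^{-j} \le V$—and then appeal to Fekete's lemma; but the squeeze argument above is more economical and has the advantage of also pinning down the value of the limit.
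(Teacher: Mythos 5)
Your proof is correct, and it is genuinely different from the paper's treatment: the paper offers no proof at all, simply citing M\"oller's Theorem 7.7, whose original argument is graph-theoretic (via rough Cayley graphs) and independent of the scale machinery. Your route instead derives the formula from Proposition \ref{prop:scale_properties}: parts (ii) and (iii) give the exact identity $[V_0 : V_0\cap x^{-k}V_0x^k] = s(x^k) = s(x)^k$ for a minimizing $V_0$, and your commensurability squeeze $\frac{1}{c}\,s(x)^k \le q_V(k) \le c\,s(x)^k$ with $c$ independent of $k$ transfers this to an arbitrary compact open $V$ after taking $k$-th roots. I checked the comparison step: with $U \le V$ of index $d$, $C = x^{-k}Vx^k$, $C' = x^{-k}Ux^k$, the upper bound is in fact the easy direction ($q_V(k) \le [V : U\cap C'] = d\,q_U(k)$ by monotonicity and the tower law alone), while the lower bound is where your injection principle is genuinely needed twice, namely $[U : U\cap C] \le [V : V\cap C]$ and $[U\cap C : U\cap C'] \le [C:C'] = d$; so your expectation about which inequality is delicate is inverted, but all the stated bounds hold. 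What each approach buys: within this paper's logical framework, where Proposition \ref{prop:scale_properties} is taken as given, your argument makes Theorem \ref{thm:Moller} self-contained and also pins down the value of the limit; however, be aware that in the literature the dependence often runs the other way --- M\"oller's limit formula is used as a tool to \emph{prove} the power law $s(x^k)=s(x)^k$ and the stability of minimizing subgroups under powers --- so outside this paper your derivation would need those two facts established independently (e.g.\ via Willis's tidying procedure) to avoid circularity.
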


\section{Baumslag-Solitar groups}
  \label{sec:bsintro}

  In this section we collect some facts that will be useful in computing scales later on. Let $\mathrm{BS}(m,n)$ be the group with presentation
$\displaystyle  \left\langle a,t \ | \ ta^mt^{-1}=a^n\right\rangle$
for  non-zero integers $m$ and $n$. 

Define $\rho(w)$ to be the  number of  $t$ letters minus the number of $t^{-1}$ letters in the word $w\in\{a^{\pm 1}, t^{\pm 1}\}^*$.   Recall  Britton's lemma \cite{\LS}, which states that if a freely reduced nonempty word in the generators $a^{\pm 1}, t^{\pm 1}$ for $\mathrm{BS}(m,n)$ is equal to the identity element, it must contain a subword of the form $ta^{cm}t^{-1}$ or $t^{-1}a^{cn}t$ for some $c\in \mathbb Z$ (such a  subword is called a {\em pinch}). Replacing $ta^{cm}t^{-1}$ or $t^{-1}a^{cn}t$ by $a^{cn}$ or $a^{cm}$ in a word is called {\em removing a pinch}.

\begin{lem}\label{lem:texpinvariant}
 If $w,u\in \{a^{\pm 1}, t^{\pm 1}\}^*$ represent the same element in $\mathrm{BS}(m,n)$ then $\rho(w)=\rho(u)$.
  \end{lem}
\begin{proof}
If $w$ or $u$ contains a pinch,  removing them does not change the respective values of $\rho$, so remove them (since each word has a finite number of $t$ letters, this process will terminate).
Since $wu^{-1}$ equals the identity in $\mathrm{BS}(m,n)$ it must contain a pinch by Britton's lemma.
Assuming there are no pinches in $w$ or $u^{-1}$, the pinch 
 must consist of one $t$ letter in one subword and one $t^{-1}$ letter in the other. Removing all pinches in $wu^{-1}$ until we obtain the empty word gives the result.
\end{proof}

It follows that $\rho$ is an invariant of group elements, and is called the {\em $t$-exponent sum} for $w$. The first author has made extensive use of the $t$-exponent sum to prove facts about Baumslag-Solitar groups \cite{\ElderEEO,\ElderB,\ElderA,\ElderC}.

\begin{lem}\label{lem:normalformBS1n}
Each element in $\mathrm{BS}(1,n)$ can be represented uniquely in the form $t^{-p}a^qt^r$ with $p,q,r\in\mathbb Z$,  $p,r\geq 0$ and $n$ dividing $ q$ only if $p=0$ or $r=0$.
\end{lem}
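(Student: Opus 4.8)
The plan is to prove existence and uniqueness of the form separately, using only Britton's lemma, the $t$-exponent sum invariant of Lemma \ref{lem:texpinvariant}, and the two commutation relations that follow directly from the defining relation $tat^{-1}=a^n$: for every $j\ge 0$ and $k\in\Z$,
\[
 t^j a^k = a^{n^j k}\, t^j
 \qquad\text{and}\qquad
 a^k t^{-j} = t^{-j} a^{n^j k}.
\]
To get existence, I would first show that \emph{every} element can be written as $t^{-p}a^qt^r$ with $p,r\ge 0$ and $q\in\Z$, ignoring the divisibility condition, by letting $S$ be the set of all such products and checking that $S$ is a subgroup. It contains the generators, since $a=t^0a^1t^0$, $t=t^0a^0t^1$ and $t^{-1}=t^{-1}a^0t^0$; it is closed under inversion since $(t^{-p}a^qt^r)^{-1}=t^{-r}a^{-q}t^p$; and it is closed under multiplication because in a product $(t^{-p}a^qt^r)(t^{-p'}a^{q'}t^{r'})$ the middle block $t^rt^{-p'}$ collapses to a single power of $t$, after which the two relations above push the resulting $a$-powers together into one central block while keeping the outer $t$-exponents nonnegative. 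Hence $S$ is the whole group. To then impose the divisibility condition, I would observe that whenever $p,r>0$ and $n\mid q$, writing $q=nq'$ and using $t^{-1}a^{nq'}t=a^{q'}$ yields
\[
 t^{-p}a^{q}t^{r}=t^{-(p-1)}a^{q'}t^{r-1},
\]
which strictly decreases $p+r$; iterating terminates in a word with $p=0$, $r=0$, or $n\nmid q$, as required.

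For uniqueness, suppose $t^{-p}a^qt^r=t^{-p'}a^{q'}t^{r'}$ with both sides reduced. Applying $\rho$ and Lemma \ref{lem:texpinvariant} gives $r-p=r'-p'$, so, assuming $p\le p'$ and setting $d=p'-p=r'-r\ge 0$, the equation rearranges to $a^q=t^{-d}a^{q'}t^d$, i.e.\ $t^d a^q=a^{q'}t^d$. The first commutation relation rewrites the left side as $a^{n^d q}t^d$, and since $a$ has infinite order (a nonempty freely reduced power of $a$ contains no pinch, hence is nontrivial by Britton's lemma) this forces $q'=n^d q$. If $q=0$ then $q'=0$ and both words equal a single power of $t$ with the same exponent, whose reduced form is visibly unique; and if $q\neq 0$ with $d>0$ then $n\mid q'$, so the reduced-form hypothesis on the right forces $p'=0$ or $r'=0$, each impossible since $p'=p+d>0$ and $r'=r+d>0$. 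Therefore $d=0$, giving $p=p'$, $r=r'$ and $q=q'$.

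The main obstacle is the uniqueness step, and specifically extracting a genuine contradiction in the case $d>0$: this is exactly the point at which the divisibility clause in the statement is used, so the argument must exploit that the right-hand word is \emph{reduced} rather than merely equal to the left-hand one. The existence half is routine once the commutation relations are in hand, the only care being to confirm that the outer $t$-exponents stay nonnegative when the central $t$-block collapses (splitting into the cases $r\ge p'$ and $r<p'$).
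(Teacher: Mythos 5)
Your proof is correct, and it is both different from and more complete than the paper's. For existence the paper argues directly on words: it applies the rewriting moves $ta^{\pm 1}\rightarrow a^{\pm n}t$, $a^{\pm 1}t^{-1}\rightarrow t^{-1}a^{\pm n}$ and free cancellation to an arbitrary word representing the element, obtaining the shape $t^{-p}a^qt^r$ with $p,r\geq 0$, and then removes pinches $t^{-1}a^{nc}t\rightarrow a^c$ exactly as you do to enforce the divisibility clause. Your alternative --- showing that the set of such products is a subgroup containing the generators --- buys a cleaner termination story (no need to argue that the rewriting of an arbitrary word halts) at the cost of the two-case bookkeeping in the closure-under-multiplication check; the two existence arguments are otherwise of equal strength. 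The more significant difference is uniqueness: the paper's proof does not address it at all (the lemma asserts uniqueness, but only existence is argued), whereas you give a complete argument via the $t$-exponent sum, the relation $t^d a^q=a^{n^d q}t^d$, the infinite order of $a$ (correctly justified by Britton's lemma), and the observation that $d>0$ with $q\neq 0$ would force $n\mid q'$ against reducedness of the right-hand word. One small point worth tightening: in the case $q=0$ you say the reduced form of a power of $t$ is ``visibly unique''; to be precise, since $n\mid 0$ the divisibility clause forces $p=0$ or $r=0$ on each side, and then the sign of the common exponent $r-p$ pins down $(p,r)$ --- a one-line argument, but note that it invokes the divisibility clause a second time rather than being automatic.
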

\begin{proof}
Let $w\in \{a^{\pm 1},t^{\pm 1}\}^*$ be a word representing an element of $\mathrm{BS}(1,n)$. Applying the moves $ta^{\pm 1}\rightarrow a^{\pm n}t$, $a^{\pm 1}t^{-1} \rightarrow t^{-1}a^{\pm n}$, and free cancellation, $w$ is equal in the group to a word of the form $t^{-p}a^qt^r$ with $p,q,r\in\mathbb Z$,  $p,r\geq 0$. If $n$ divides $q$ and $p,r>0$ then the word contains a pinch $t^{-1}a^{nc}t$ which can be replaced by $a^c$. Repeating this gives the result.
\end{proof}

\noindent Note that $\mathrm{BS}(1,n)$ is isomorphic to the linear group, generated by $$ a= \left[\begin{array}{cc} 1 & 1\\ 0 & 1\end{array}\right] \ \ \ \mathrm{and} \ \ \ \displaystyle t=\left[\begin{array}{cc} n & 0\\ 0 & 1\end{array}\right].$$
This group is isomorphic to
\begin{equation}
\label{eq:matrix_group}
G = \left\{ \left[\begin{array}{cc} n^\rho & z\\ 0 & 1\end{array}\right] \mid  \rho\in\mathbb{Z},\ z\in \mathbb{Z}[1/n]
\right\}
\end{equation}
and Lemma~\ref{lem:normalformBS1n} thus represents a matrix in this  group as
$$
\left[\begin{array}{cc} n^{-p} & 0\\ 0 & 1\end{array}\right]\left[\begin{array}{cc} 1 & q\\ 0 & 1\end{array}\right]\left[\begin{array}{cc} n^r & 0\\ 0 & 1\end{array}\right].
$$
 
%xIn the case $\mathrm{BS}(m,n)$  $|m|,|n|> 1$ the normal forms obtainable are not as simple as these.

  \begin{lem}
\label{lem:BSnormal}
The subgroup $\left\langle a\right\rangle$ has a nontrivial subgroup that is normal in $\mathrm{BS}(m,n)$ if and only if $|m|=|n|$.
\end{lem}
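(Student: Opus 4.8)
The plan is to reduce the question to understanding which powers of $a$ are conjugated by $t^{\pm1}$ back into $\langle a\rangle$, and then to read off the divisibility constraints that normality imposes. First note that $\langle a\rangle$ is infinite cyclic: any nonempty word $a^k$ is freely reduced and contains no pinch, so by Britton's lemma it cannot equal the identity unless $k=0$, whence $a$ has infinite order. Consequently the nontrivial subgroups of $\langle a\rangle$ are exactly the $\langle a^k\rangle$ with $k\geq1$. Since conjugation by $a$ fixes $\langle a^k\rangle$ and the pair $a,t$ generates the group, $\langle a^k\rangle$ is normal in $\mathrm{BS}(m,n)$ if and only if both $ta^kt^{-1}\in\langle a^k\rangle$ and $t^{-1}a^kt\in\langle a^k\rangle$.

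The key technical step, which I expect to be the main obstacle, is the claim that $ta^st^{-1}\in\langle a\rangle$ holds precisely when $m\mid s$, in which case $ta^st^{-1}=a^{ns/m}$; and symmetrically $t^{-1}a^st\in\langle a\rangle$ precisely when $n\mid s$, with $t^{-1}a^st=a^{ms/n}$. To establish this I would apply Britton's lemma to the word $ta^st^{-1}a^{-j}$: for $s\neq0$ this word is freely reduced, so if it represents the identity it must contain a pinch, and the only candidate is the subword $ta^st^{-1}$, which is a pinch exactly when $s$ is a multiple of $m$. Removing it replaces $ta^{cm}t^{-1}$ by $a^{cn}$ and forces $j=cn=ns/m$, giving both the divisibility condition and the resulting exponent.

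Granting this, the forward direction follows quickly. If $\langle a^k\rangle$ is nontrivial and normal, then $ta^kt^{-1}\in\langle a^k\rangle\subseteq\langle a\rangle$ forces $m\mid k$, say $k=m\ell$, and then $ta^kt^{-1}=a^{n\ell}$; membership in $\langle a^{m\ell}\rangle$ requires $m\ell\mid n\ell$, that is $m\mid n$. Running the symmetric argument with $t^{-1}a^kt$ yields $n\mid m$, and the two divisibilities give $|m|=|n|$.

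For the converse I would exhibit a normal subgroup directly. If $m=n$, the defining relation says $t$ commutes with $a^m$, so $a^m$ is central and $\langle a^m\rangle$ is a nontrivial normal subgroup sitting inside $\langle a\rangle$. If $m=-n$, the relation gives $ta^mt^{-1}=a^{-m}=(a^m)^{-1}$, and applying it to $a^{-m}$ shows likewise $t^{-1}a^mt=(a^m)^{-1}$; thus conjugation by $t^{\pm1}$ carries $\langle a^m\rangle$ to itself, so $\langle a^m\rangle$ is again the desired nontrivial normal subgroup. This settles both directions.
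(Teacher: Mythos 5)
Your proof is correct, and it takes a genuinely different route from the paper's. You reduce normality of $\langle a^k\rangle$ to invariance under conjugation by the generators $t^{\pm 1}$ and then apply Britton's lemma just once, to a single conjugate: $ta^st^{-1}\in\langle a\rangle$ exactly when $m\mid s$ (in which case $ta^st^{-1}=a^{ns/m}$), and symmetrically for $t^{-1}a^st$. Feeding normality into this characterization yields the two divisibilities $m\mid n$ and $n\mid m$ simultaneously, hence $|m|=|n|$, with no case analysis on which of $|m|,|n|$ is larger. The paper instead assumes without loss of generality that $|m|<|n|$, writes $k=q|n|^s+r$ in base $|n|$, and iterates conjugation by $t^{-1}$ a total of $s$ times, arguing that $t^{-s}a^kt^s$ either falls outside $\langle a\rangle$ (when $r>0$) or equals $a^{q(\pm m)^s}$ (when $r=0$), whose exponent is too small in absolute value to lie in $\langle a^k\rangle$. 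Your version is more elementary and also more robust: the paper's claim that $t^{-s}a^kt^s\notin\langle a\rangle$ whenever $r>0$ can actually fail (e.g.\ $m=2$, $n=4$, $k=24$ gives $s=2$, $r=8$, yet $t^{-2}a^{24}t^2=a^6\in\langle a\rangle$; the lemma survives only because $a^6\notin\langle a^{24}\rangle$), whereas your single-conjugation divisibility argument has no such delicate step. Finally, in the converse direction you correctly separate $m=n$ (where $a^m$ is central) from $m=-n$ (where $\langle a^m\rangle$ is normalized but inverted by conjugation by $t$); the paper's parenthetical assertion that $\langle a^m\rangle$ is ``in fact central'' whenever $|m|=|n|$ is inaccurate in the case $m=-n$, so your more careful treatment of that case is an improvement rather than a redundancy.
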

\begin{proof}
If $|m|= |n|$, then $\left\langle a^m\right\rangle$ is normal, in fact central, in $\mathrm{BS}(m,n)$.
If $|m|\neq |n|$, assume without loss of generality that $|m|<|n|$, and suppose $K\leq \left\langle a \right\rangle$ is a nontrivial normal subgroup of $\mathrm{BS}(m,n)$. Then $K=\left\langle a^k\right\rangle$ for some positive integer $k$.
Let $s$ be the largest integer such that $k=q|n|^s+r$ for some $0\leq q<|n|$ and $0\leq r<|n|^s$. If $r=0$ then  $t^{-s}a^kt^s=a^{q(\pm m)^s}$ which is not in $K$ since $|qm^s|<|qn^s|=k$, and if $r>0$ then   $t^{-s}a^kt^s$ is not in $\left\langle a\right\rangle$.
\end{proof}

\section{Commensurated subgroups}\label{sec:comm}

  Define a relation $\sim$ on the set of subgroups of an abstract group $G$ by $H\sim K$ if $H\cap K$ is finite index in both $H$ and $K$. We say that $H$ and $K$
are {\em commensurable} if $H\sim K$. One may verify that  being 
 commensurable is an equivalence relation\footnote{
Clearly $\sim$ is reflexive and symmetric.  That $\sim$ is transitive follows because, for subgroups $H$, $K$ and $L$,  $[H:H\cap L]\leq [H:H\cap K\cap L] = [H:H\cap K][H\cap K: H\cap K\cap L]$  and $[H\cap K: H\cap K\cap L]\leq [K:K\cap L]$.}.
 
 A subgroup $H$ is {\em commensurated} by $G$ if for each $g\in G$ the subgroups $H$ and $gHg^{-1}$ are commensurable. 
 If $x,y\in G$, $xHx^{-1}\sim H$ and $H \sim yHy^{-1}$  then $xyHy^{-1}x^{-1}\sim H$ (since $H\sim K$ implies $xHx^{-1}\sim xKx^{-1}$).
It follows that if $G$ is generated by a set $X$, then $H$ is commensurated by $G$ if and only if $xHx^{-1}\sim H$ for all $x\in X$. This gives a fast way to check for commensurated subgroups in finitely generated groups.

One example of a commensurated subgroup is 
 $\mathrm{SL}(k,\mathbb Z)$ in $\mathrm{SL}(k,\mathbb Q)$, and in this important case the construction about to be given in Section~\ref{sec:construction} yields the embedding of $\mathrm{SL}(k,\mathbb Q)$ into $\mathrm{SL}(k,{\mathbb A}_f)$, where ${\mathbb A}_f$ denotes the ring of finite adeles. Restricting to the subgroup $G$ of $\mathrm{SL}(2,\mathbb Q)$ appearing in Equation~(\ref{eq:matrix_group}), this construction yields an embedding of $G$ into a group isomorphic to 
 \begin{equation}
\label{eq:matrix_group2}
\left\{ \left[\begin{array}{cc} n^\rho & z\\ 0 & 1\end{array}\right] \mid  \rho\in\mathbb{Z},\ z\in \mathbb{Q}_{p_1}\times \cdots \times \mathbb{Q}_{p_l} 
\right\},
\end{equation}
where $p_1$,\dots, $p_l$ are the prime divisors of $n$. 
In the case of  Baumslag-Solitar groups, the cyclic subgroup $\langle a \rangle$ is commensurated because $t\langle a\rangle t^{- 1} \cap \langle a\rangle=\langle a^n \rangle$ is finite index in both $t\langle a\rangle t^{-1}$ and $\langle a\rangle$, and similarly $t^{-1}\langle a\rangle t \cap \langle a\rangle=\langle a^m \rangle$   is finite index in both $t^{-1}\langle a\rangle t$ and $\langle a\rangle$. Recalling the isomorphism between $\mathrm{BS}(1,n)$ and $G$, applying the construction to $\mathrm{BS}(m,n)$ will thus extend this embedding of matrix groups.

\section{Construction of the totally disconnected group}
\label{sec:construction}

Let $G$ be an abstract group with commensurated subgroup $H$.
The action of $G$ on $G/H$ given by $g'.gH = (g'g)H$, $(g,g'\in G)$, determines a homomorphism $\pi : G \to \mathrm{Sym}(G/H)$. 
%The kernel of $\pi$ is $\bigcap_{g\in G} gHg^{-1}$, the largest normal subgroup of $G$ that is contained in $H$. 
%The topological group to be constructed will be the closure of $\pi(G)$ in $\mathrm{Sym}(G/H)$ with respect to a topology that is defined next. 
%Some readers might know this topology as the {\em compact-open topology}.
For each $x\in \mathrm{Sym}(G/H)$ and each finite subset $\ff$ of $G/H$, define 
$$
\mathcal N(x,\ff)=\{y\in \mathrm{Sym}(G/H) \ | \ y(gH)=x(gH) \ \forall (gH)\in \ff\}.
$$ 
Note that if $y\in  \mathcal N(x,\ff)$ then  $\mathcal N(x,\ff)= \mathcal N(y,\ff)$.
If  $
\mathcal N(x_1, \ff_1)\cap \mathcal N(x_2,\ff_2)  $ is nonempty then it contains some element $y$, so $N(x_1, \ff_1)=N(y,\ff_1)$ and  $N(x_2, \ff_2)=N(y,\ff_2)$. Then   $$\mathcal N(x_1, \ff_1)\cap \mathcal N(x_2,\ff_2)=
\mathcal N(y, \ff_1)\cap \mathcal N(y,\ff_2)  =\mathcal N(y,\ff_1\cup \ff_2).$$
It follows that $\left\{ \mathcal N(x,\ff)\mid x\in G,\, \ff\subseteq G/H\hbox{ finite}\right\}$ forms a basis for a topology on  $\mathrm{Sym}(G/H)$.

\begin{lem} The topology defined on $\mathrm{Sym}(G/H)$ is Hausdorff. Hence the subspace topology on $\pi(G)$ induces a Hausdorff topology on $G/\ker\pi$.
\end{lem}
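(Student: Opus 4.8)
The plan is to read off Hausdorffness directly from the given basis, exploiting that this is nothing but the topology of pointwise convergence on $\mathrm{Sym}(G/H)$ regarded inside $(G/H)^{G/H}$ with $G/H$ discrete. First I would take two distinct elements $x,y\in\mathrm{Sym}(G/H)$. Since $x$ and $y$ differ as functions on $G/H$, there is at least one coset $gH$ with $x(gH)\neq y(gH)$. Setting $\ff=\{gH\}$, the basic open sets $\mathcal N(x,\ff)$ and $\mathcal N(y,\ff)$ are neighbourhoods of $x$ and of $y$ respectively.

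The key step is then to confirm that these neighbourhoods are disjoint, not merely distinct. Any $z\in\mathcal N(x,\ff)\cap\mathcal N(y,\ff)$ would satisfy both $z(gH)=x(gH)$ and $z(gH)=y(gH)$, forcing $x(gH)=y(gH)$ and contradicting the choice of $gH$. Hence $\mathcal N(x,\ff)\cap\mathcal N(y,\ff)=\emptyset$, so the topology separates any two points and is therefore Hausdorff.

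For the concluding assertion I would argue formally. The set $\pi(G)$ carries the subspace topology inherited from $\mathrm{Sym}(G/H)$, and a subspace of a Hausdorff space is again Hausdorff. By the first isomorphism theorem $\pi$ factors through a group isomorphism $G/\ker\pi\to\pi(G)$, and transporting the subspace topology across this bijection endows $G/\ker\pi$ with a Hausdorff topology. I expect no serious obstacle: once the separating coset $gH$ has been produced, everything is immediate, and the only points needing care are that the singleton $\{gH\}$ is a legitimate finite index set for a basic neighbourhood, and that the two basic sets genuinely have empty intersection rather than being merely unequal.
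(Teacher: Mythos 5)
Your proposal is correct and follows essentially the same route as the paper: the paper's proof also separates distinct permutations $x,y$ by the disjoint basic neighbourhoods $\mathcal N(x,\{gH\})$ and $\mathcal N(y,\{gH\})$ at a coset where they differ, and likewise invokes the First Isomorphism Theorem for the statement about $G/\ker\pi$. You have merely spelled out the details (existence of the separating coset, the disjointness check, and the transport of the subspace topology) that the paper leaves implicit.
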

\begin{proof}
If $x,y\in \mathrm{Sym}(G/H)$ are distinct, then the neighbourhoods $\mathcal N(x,\{gH\})$ and $\mathcal N(y,\{gH\})$ are disjoint for some $g\in G$. Hence $\mathrm{Sym}(G/H)$ is a Hausdorff topological group. The second claim is justified by the First Isomorphism Theorem.
\end{proof}

% Conversely if $x\in \ker\pi$ and $x\neq e$, the identity element of $ \mathrm{Sym}(G/H)$,
% then  $x.(gH)=gH$ for each coset in $G/H$, so $\mathcal N(x,\ff)= \mathcal N(e,\ff)$ for every subset $\ff$ of $G/H$, which means there are no disjoint.
% open sets $U,V$ in the topology with $x\in U$ and $e\in V$. 

Note that $\ker\pi$ is a subgroup of $H$ and that $H/\ker\pi$ is commensurated by $G/\ker\pi$. The topology on $G/\ker\pi$ may then be defined equivalently by considering the injection of $G/\ker\pi$ into $\mathrm{Sym}(G/\ker\pi)/(H/\ker\pi)$. From now on it is assumed that the kernel is trivial.

\begin{lem}
The topology defined is totally disconnected.\end{lem}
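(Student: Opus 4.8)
The plan is to show that the basis $\left\{\mathcal N(x,\ff)\right\}$ consists of sets that are \emph{closed} as well as open; once the topology is seen to have a basis of clopen sets, total disconnectedness follows at once. So the whole argument upgrades the Hausdorff lemma by the single extra observation that its separating neighbourhoods are closed.

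First I would verify that each basic set $\mathcal N(x,\ff)$ is closed by checking that its complement is open. Suppose $y\notin \mathcal N(x,\ff)$. Then there is some $gH\in\ff$ with $y(gH)\neq x(gH)$, and the basic neighbourhood $\mathcal N(y,\{gH\})$ is an open set containing $y$ that is disjoint from $\mathcal N(x,\ff)$: any $z\in\mathcal N(y,\{gH\})$ satisfies $z(gH)=y(gH)\neq x(gH)$, so $z\notin\mathcal N(x,\ff)$. Hence the complement of $\mathcal N(x,\ff)$ is a union of basic open sets, and $\mathcal N(x,\ff)$ is clopen.

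Next, given two distinct points $x,y\in\mathrm{Sym}(G/H)$, the proof of the Hausdorff lemma already supplies a coset $gH$ with $x(gH)\neq y(gH)$. Setting $U=\mathcal N(x,\{gH\})$ and $V=\mathrm{Sym}(G/H)\setminus U$, the previous step shows $U$ is clopen, so $V$ is open; moreover $x\in U$, $y\in V$, $U\cap V=\emptyset$, and $U\cup V$ is the whole space. This is precisely the equivalent formulation of total disconnectedness recorded in the introduction, so $\mathrm{Sym}(G/H)$ is totally disconnected.

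Finally, total disconnectedness is inherited by subspaces, so the subspace topology that $\pi(G)=G/\ker\pi$ carries is totally disconnected as well, which is the assertion of the lemma (recall the kernel is now assumed trivial, so this topology is exactly the one defined on $G$). Since the reasoning is essentially formal, I do not anticipate a genuine obstacle; the only point requiring care is the clopenness of the defining neighbourhoods, and that step is what promotes the Hausdorff property to total disconnectedness.
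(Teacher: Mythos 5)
Your proof is correct and follows essentially the same route as the paper: the paper also separates two distinct points $x,y$ by the basic set $\mathcal N(x,\{gH\})$ and observes that its complement is a union of basic sets $\mathcal N(z,\{gH\})$, hence open, giving the clopen partition required by the definition of total disconnectedness. Your only additions, proving clopenness for arbitrary finite $\ff$ and spelling out the inheritance by the subspace $\pi(G)$, are harmless elaborations of the same idea.
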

\begin{proof}
If $x,y\in \mathrm{Sym}(G/H)$ are distinct, there is a coset $gH$ with $x(gH)\neq y(gH)$. Then $\mathcal{N}(x,\{gH\})$ is an open set containing $x$, and its complement, 
$$
\bigcup\left\{ \mathcal{N}(z,\{gH\}) \mid z(gH) \ne x(gH) \right\},
$$ 
is open and contains $y$.
\end{proof}

%Note that this topology on $\mathrm{Sym}(G/H)$ is the weakest such that the map $\phi: \mathrm{Sym}(G/H)\times G/H \to G/H$ defined by 
%$$
%\phi(x, gH) = xgH 
%$$
%is continuous when $G/H$ is equipped with the discrete topology because 
%$$
%\mathcal N(x,\ff) = \phi^{-1}\left(\left\{xgH\right\}\right) \cap \left(\mathrm{Sym}(G/H)\times \left\{ gH\right\}\right).
%$$

It is a standard result that 
$\mathrm{Sym}(G/H)$ is a topological group with the topology defined. For %the benefit of readers unfamiliar with 
completeness we include a proof of this fact.
\begin{lem}
The map $\ast:(x,y)\mapsto xy^{-1}$ is continuous.
\end{lem}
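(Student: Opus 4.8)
The plan is to verify continuity directly against the given basis, by producing, for each basic neighbourhood of an image point $xy^{-1}$, a product of basic neighbourhoods of $(x,y)$ that maps into it. So I would fix $(x,y)\in\mathrm{Sym}(G/H)\times\mathrm{Sym}(G/H)$ together with a finite set $\ff\subseteq G/H$, giving the basic neighbourhood $\mathcal N(xy^{-1},\ff)$ of the image $\ast(x,y)=xy^{-1}$. The goal is then to exhibit finite sets $\ff_1,\ff_2\subseteq G/H$ with $\ast\bigl(\mathcal N(x,\ff_1)\times\mathcal N(y,\ff_2)\bigr)\subseteq\mathcal N(xy^{-1},\ff)$.

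The key observation is that controlling $xy^{-1}$ on $\ff$ forces one to control the \emph{inverse} $y^{-1}$ on $\ff$, and agreement of two permutations on a finite set passes to their inverses only after translating that set through the permutation. Concretely, I would set $\ff'=y^{-1}(\ff)$, which is again finite since $y$ is a bijection and $\ff$ is finite, and take $\ff_1=\ff_2=\ff'$, i.e.\ the neighbourhoods $U=\mathcal N(x,\ff')$ and $V=\mathcal N(y,\ff')$.

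First I would check that $V$ forces $y'^{-1}$ to agree with $y^{-1}$ on $\ff$: if $y'\in V$, then $y'(h)=y(h)$ for every $h\in\ff'$, and for $gH\in\ff$ the element $h=y^{-1}(gH)$ lies in $\ff'$ and satisfies $y'(h)=y(h)=gH$, so bijectivity of $y'$ yields $y'^{-1}(gH)=h=y^{-1}(gH)$. Next, for $x'\in U$ one has $x'(h)=x(h)$ for all $h\in\ff'$. Combining these, for any $gH\in\ff$ and any $(x',y')\in U\times V$,
\[
(x'y'^{-1})(gH)=x'\bigl(y'^{-1}(gH)\bigr)=x'\bigl(y^{-1}(gH)\bigr)=x\bigl(y^{-1}(gH)\bigr)=(xy^{-1})(gH),
\]
where the middle equalities use $y^{-1}(gH)\in\ff'$. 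Hence $x'y'^{-1}\in\mathcal N(xy^{-1},\ff)$, giving $\ast(U\times V)\subseteq\mathcal N(xy^{-1},\ff)$ as required.

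The only genuine obstacle is the handling of the inversion: one must resist the tempting choice $\ff_1=\ff_2=\ff$, since agreement of $y'$ with $y$ on $\ff$ says nothing about $y'^{-1}$ on $\ff$. The correct move is to pull $\ff$ back through $y$ and impose agreement of \emph{both} coordinates on the finite set $y^{-1}(\ff)$; once that is in place, everything else is routine bookkeeping with the basis sets $\mathcal N(\cdot,\cdot)$.
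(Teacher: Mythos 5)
Your proof is correct and follows essentially the same route as the paper's: both pull the finite set $\ff$ back through $y$ and take the product neighbourhood $\mathcal N(x,y^{-1}\ff)\times\mathcal N(y,y^{-1}\ff)$, then use bijectivity to convert agreement of the second coordinate on $y^{-1}\ff$ into agreement of its inverse on $\ff$. Your write-up is just a more explicit version of the paper's argument, including the warning about why $\ff_1=\ff_2=\ff$ would fail.
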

\begin{proof}
The map is continuous 
at $(x,y)$ if for any open set ${V}$ containing $xy^{-1}$
there is 
 an open set ${U}\subseteq \mathrm{Sym}(G/H)\times \mathrm{Sym}(G/H)$ containing $(x,y)$ whose image is in $V$.

Since ${V}$ is open and contains $xy^{-1}$, it contains a set $\mathcal N(xy^{-1},\ff)$ for some finite $\ff$.
Take ${U}
=\mathcal N(x,y^{-1}\ff)\times \mathcal N(y,y^{-1}\ff)$. If 
 $(a,b)\in {U}$, then $b(y^{-1}(gH))=y(y^{-1}(gH))=(gH)$ and $b^{-1}(gH)=y^{-1}(gH)$ for all $gH\in \ff$ and $
ab^{-1}(gH)=a(y^{-1}(gH))
=xy^{-1}(gH)
$
for all $gH\in \ff$. Hence $ab^{-1}\in \mathcal N(xy,\ff)\subseteq {V}$.
\end{proof}

%The inverse is continuous at $x$ if for any open set  ${V}$ containing $x^{-1}$
% there is an open set ${U}$ containing $x$ whose image under $\iota$ is contained in~${V}$. 
%Again $\mathcal N(x^{-1},\ff)\subseteq {V}$ for some finite $\ff$, and we take ${U}=\mathcal N(x,x^{-1}\ff)$. Then 
% $a\in {U}$ means that 
% $$
% ax^{-1}(gH)=xx^{-1}(gH) =  gH
% $$ 
% for all $(gH)\in \ff$. Hence $x^{-1}(gH)=a^{-1}(gH)$ for all $gH\in\ff$ and $a^{-1}\in \mathcal N(x^{-1}, \ff)\subseteq {V}$.

It follows that $\pi(G)$ is also a (Hausdorff and totally disconnected) topological group, with the subspace topology induced from $\mathrm{Sym}(G/H)$. 
Define $G\quot H$ to be the closure of $\pi(G)$ in $\mathrm{Sym}(G/H)$, and define $\tilde{H}$ to be the closure of $\pi(H)$. Then 
$$
\tilde{H} = \left\{ x\in G\quot H\mid x(H) = H\right\} = G\quot H\cap \mathcal N(\ident, \{H\})
$$
is an open subgroup of $G\quot H$.

It is shown next that $G\quot H$ is locally compact. The following fact is used in this argument and also later in the calculation of the scale. 
\begin{lem}
\label{lem:index_identity}
Suppose that ${V}$ is an open subgroup of $G\quot H$.
Then, setting ${U} = {V}\cap \pi(G)$, each $U$-orbit $U.(gH)\subseteq G/H$ is equal to $V$-orbit $V.(gH)$.
\end{lem}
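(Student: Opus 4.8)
The plan is to prove the two inclusions of $U.(gH) = V.(gH)$ separately. One direction is immediate: since $U = V\cap\pi(G)\subseteq V$, every element of $U$ moves $gH$ to a point of the $V$-orbit, so $U.(gH)\subseteq V.(gH)$. The entire content of the lemma therefore lies in the reverse inclusion $V.(gH)\subseteq U.(gH)$, and the key to it is that the openness of $V$ lets us approximate arbitrary elements of $V$ by elements of the dense subgroup $\pi(G)$.

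First I would record the structural fact that $U$ is dense in $V$. By construction $G\quot H$ is the closure of $\pi(G)$, so $\pi(G)$ is dense in $G\quot H$. Given any nonempty subset $W$ of $V$ that is open in the subspace topology, the openness of $V$ in $G\quot H$ means $W$ is itself open in $G\quot H$, and hence meets $\pi(G)$; since $W\subseteq V$, this intersection lies in $V\cap\pi(G)=U$. Thus $W\cap U\neq\emptyset$ for every such $W$, which is exactly the assertion that $U$ is dense in $V$.

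With this in hand the reverse inclusion is quick. Fix $v\in V$; I want to produce $u\in U$ with $u(gH)=v(gH)$. Consider the basic open set $\mathcal N(v,\{gH\})=\{y\in\mathrm{Sym}(G/H)\mid y(gH)=v(gH)\}$, which contains $v$. Its intersection with $V$ is a nonempty open subset of $V$ containing $v$, so by the density just established there exists $u\in U\cap\mathcal N(v,\{gH\})$. The defining property of $\mathcal N(v,\{gH\})$ gives $u(gH)=v(gH)$, so $v.(gH)\in U.(gH)$. As $v\in V$ was arbitrary, $V.(gH)\subseteq U.(gH)$, and together with the trivial inclusion this yields equality. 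The argument is essentially routine; the only point needing care is the interplay between the subspace topology on $V$ and the ambient topology on $\mathrm{Sym}(G/H)$, and this is precisely what the hypothesis that $V$ is \emph{open} resolves, so I do not anticipate a genuine obstacle.
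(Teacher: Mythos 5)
Your proposal is correct and follows exactly the paper's argument: the openness of $V$ together with the density of $\pi(G)$ in $G\quot H$ gives that $U$ is dense in $V$, and then intersecting the basic neighbourhood $\mathcal N(v,\{gH\})$ with $U$ produces the required element $u$ with $u(gH)=v(gH)$. You merely spell out the density step and the trivial inclusion $U.(gH)\subseteq V.(gH)$ in more detail than the paper does.
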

\begin{proof}
Since ${V}$ is open and $\pi(G)$ is dense in $G\quot H$, ${U}$ is a dense subgroup of ${V}$. Hence the intersection $\mathcal N(x, \{gH\})\cap {U}$ is nonempty for any 
 $x\in {V}$ and there is $u\in U$ such that $x(gH)=u(gH)$.
 \end{proof}
 
%%%OLD PROOF AND STATEMENT: 

%  \begin{lem}
%  \label{lem:index_identity}
%  Suppose that ${V}$ is an open subgroup of $G\quot H$. 
%  Then, setting ${U} = {V}\cap \pi(G)$,    %%notation for a subgroup of a top group that is not nec open?
%  $$
%  {V}(gH) = {U}(gH)\hbox{ for each }gH\in G/H.
%  $$
%  \end{lem}
%  \begin{proof}
%  Since ${V}$ is open and $\pi(G)$ is dense in $G\quot H$, ${U}$ is a dense subgroup of ${V}$. Then for any 
 %  $x\in {V}$ and $gH\in G/H$ the intersection $\mathcal N(x, \{gH\})\cap {U}$ is nonempty, so there is a $u\in U$ such that $x(gH)=u(gH)$.

%  % The claim then follows from the continuity of $\phi : (x,gH) \mapsto xgH$. More precisely, if $x\in \tilde{{V}}$, then there is $v\in {V}\cap \mathcal N(x, \{gH\})$, whence $xgH =  vgH$, which belongs to ${V}.gH$. 

When applied to the orbits $U.xH$ and $V.xH$, the Orbit-Stabiliser theorem implies the following consequence. 

\begin{cor}\label{cor:indexG}
If  ${V}$ and  ${U}$ are as in Lemma \ref{lem:index_identity} and $x\in G\quot H$, then $$[ {V}: x^{-1} Vx\cap V ]=[ {U}: x^{-1} {U}x\cap U ] .$$
\end{cor}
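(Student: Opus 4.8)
The plan is to apply the Orbit--Stabiliser theorem to both group actions simultaneously and reconcile the two index computations using Lemma~\ref{lem:index_identity}. First I would fix the element $x \in G\quot H$ and the coset $gH$, and consider the action of $V$ on the orbit $V.(gH) \subseteq G/H$. By the Orbit--Stabiliser theorem, the index $[V : \mathrm{Stab}_V(gH)]$ equals the cardinality of the orbit $V.(gH)$, and similarly $[U : \mathrm{Stab}_U(gH)] = |U.(gH)|$. The crucial input from Lemma~\ref{lem:index_identity} is that these two orbits are literally the same subset of $G/H$, so their cardinalities agree. This pins down a relationship between the $V$-action and $U$-action, but it is not yet the statement we want, since the corollary concerns the index of $x^{-1}Vx \cap V$ in $V$, not a point stabiliser.

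The key observation is that $x^{-1}Vx \cap V$ is precisely the stabiliser in $V$ of the ``point'' $xH$ under the right action, or more carefully, that the coset space $V/(x^{-1}Vx \cap V)$ is $V$-equivariantly in bijection with the orbit $V.(xH)$. To see this I would unwind the definitions: $v \in x^{-1}Vx \cap V$ iff $v \in V$ and $xvx^{-1} \in V$, and I would identify this subgroup with the stabiliser of the point $xH$ for the appropriate action induced by $x$. Concretely, the map $v \mapsto v.(xH)$ sends $V$ onto the orbit $V.(xH)$ with fibres exactly the cosets of the stabiliser $\mathrm{Stab}_V(xH) = \{v \in V : v(xH) = xH\}$, and one checks that this stabiliser coincides with $x^{-1}Vx \cap V$ after translating the action on $G/H$ through $x$. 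Thus $[V : x^{-1}Vx \cap V] = |V.(xH)|$, and the same argument with $U$ in place of $V$ gives $[U : x^{-1}Ux \cap U] = |U.(xH)|$.

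Having reduced both sides to orbit cardinalities, I would invoke Lemma~\ref{lem:index_identity} directly with the coset $xH$ in the role of $gH$: it asserts $U.(xH) = V.(xH)$ as subsets of $G/H$, hence these orbits have equal cardinality, and combining the two Orbit--Stabiliser identities yields
\begin{equation*}
[V : x^{-1}Vx \cap V] = |V.(xH)| = |U.(xH)| = [U : x^{-1}Ux \cap U],
\end{equation*}
which is the desired equality.

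The main obstacle I anticipate is the bookkeeping in the second step: correctly identifying $x^{-1}Vx \cap V$ with a point stabiliser for the $V$-action on $G/H$, and verifying that the relevant orbit is $V.(xH)$ rather than some other orbit. One must be careful that $x$ itself need not lie in $\pi(G)$ (it is a general element of the completion $G\quot H$), so the ``point'' $xH = x(H)$ is defined via the action on $G/H$, and the equivariant bijection $V/(x^{-1}Vx \cap V) \to V.(xH)$ must be justified through the group action rather than by manipulating group elements as cosets. Once this identification is in place the rest is a formal application of results already established, and in particular Lemma~\ref{lem:index_identity} does all the work of transferring the statement from $V$ down to the dense subgroup $U$.
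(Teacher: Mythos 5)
Your proposal follows the same orbit--stabiliser route that the paper sketches, but its pivotal step---the identification of $x^{-1}Vx\cap V$ with $\mathrm{Stab}_V(x(H))$---is false in the generality required, and your displayed chain of equalities breaks exactly there. For $v\in V$ one has $v(x(H))=x(H)$ if and only if $(x^{-1}vx)(H)=H$, i.e.\ if and only if $x^{-1}vx$ lies in the \emph{full} stabiliser of the base coset; hence $\mathrm{Stab}_V(x(H))=V\cap x\tilde{H}x^{-1}$, where $\tilde{H}=\{y\in G\quot H\mid y(H)=H\}$ as in Section~\ref{sec:construction}. This need not equal $V\cap x^{-1}Vx$ (or $V\cap xVx^{-1}$): equality at $x=\ident$ already forces $V\subseteq\tilde{H}$, and even then it can fail. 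A concrete failure inside the paper's own range of application: in $G_{1,2}$ take the compact open subgroup $V=\overline{\langle a^{2}\rangle}\subseteq\tilde{H}$ and $x=\pi(t^{-1})$. Then $x^{-1}Vx\cap V=\overline{\langle a^{4}\rangle}$ has index $2$ in $V$, yet the orbit $V.(x(H))$ is the single point $t^{-1}\langle a\rangle$, because $a^{2k}t^{-1}\langle a\rangle=t^{-1}a^{4k}\langle a\rangle=t^{-1}\langle a\rangle$. So your identity $[V:x^{-1}Vx\cap V]=|V.(x(H))|$ returns $1$ where both indices in the corollary equal $2$: orbits of points of $G/H$ simply do not compute these indices, except in the special case $V=\tilde{H}$, where your argument is correct.

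What does work is to run the density mechanism of Lemma~\ref{lem:index_identity} on cosets of $W:=x^{-1}Vx\cap V$ rather than on points of $G/H$. Since $W$ is an open subgroup of $V$ and $U$ is dense in $V$, the map $u(W\cap U)\mapsto uW$ from $U/(W\cap U)$ to $V/W$ is injective for formal reasons and surjective because every coset $vW$ is open and hence meets $U$; thus $[V:W]=[U:W\cap U]$. It remains to identify $W\cap U$ with $x^{-1}Ux\cap U$, and here a hypothesis is genuinely needed: the identification holds when $x\in\pi(G)$, since conjugation by such $x$ preserves $\pi(G)$, and this is the only case in which the paper actually uses the corollary (throughout Section~\ref{sec:compute} the element $x$ lies in $\mathrm{BS}(m,n)$). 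For general $x\in G\quot H$ the two can differ, and the statement itself then becomes problematic: in $G_{1,2}$ with $V=G\quot H$ (an open subgroup) and a sufficiently generic $x\in\overline{\langle a\rangle}$, one finds $[V:x^{-1}Vx\cap V]=1$ while $[U:x^{-1}Ux\cap U]$ is infinite. So the gap is not bookkeeping: no argument can establish the corollary for arbitrary open $V$ and arbitrary $x\in G\quot H$, and the orbit--stabiliser reduction you propose is valid only when $V=\tilde{H}$.
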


\begin{lem}
The subspace topology defined on $G\quot H$ is locally compact.
\end{lem}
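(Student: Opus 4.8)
The plan is to show that the open subgroup $\tilde{H}$ is compact, since a topological group in which the identity has a compact neighbourhood is locally compact, and $\tilde{H}$ is an open (hence neighbourhood) subgroup containing the identity. First I would recall that $\tilde{H} = G\quot H \cap \mathcal{N}(\ident, \{H\})$ is precisely the set of $x \in G\quot H$ that fix the coset $H \in G/H$. So it suffices to prove that the stabiliser of the point $H$ in $G\quot H$ is compact.

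The key observation is that $G\quot H$ sits inside $\mathrm{Sym}(G/H)$ with the topology of pointwise convergence, and that the commensuration hypothesis forces each element of $\tilde{H}$ to move any given coset $gH$ to one of only finitely many cosets. Concretely, if $x \in \tilde{H}$ then $x$ fixes $H$; using Lemma~\ref{lem:index_identity} (applied with ${V} = \tilde{H}$ and ${U} = \pi(H)$), the $\tilde{H}$-orbit of any coset $gH$ coincides with the $\pi(H)$-orbit $\pi(H).gH = \{hgH \mid h \in H\}$. Because $H$ commensurates itself trivially but more importantly $H$ is commensurated by $G$, the orbit $H.gH$ is finite: indeed $hgH = gH$ iff $g^{-1}hg \in H$, so the stabiliser of $gH$ inside $H$ is $H \cap gHg^{-1}$, which has finite index in $H$ by commensurability, whence the orbit $H.gH$ has finite cardinality $[H : H \cap gHg^{-1}]$. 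Thus every element of $\tilde{H}$ maps each coset $gH$ into a fixed finite subset of $G/H$.

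This means $\tilde{H}$ embeds into the product $\prod_{gH \in G/H} O(gH)$, where $O(gH)$ denotes the finite $\pi(H)$-orbit of $gH$, and the topology of pointwise convergence on $\mathrm{Sym}(G/H)$ restricts to the product topology on this embedding. By Tychonoff's theorem the product $\prod_{gH} O(gH)$ of finite (hence compact) discrete spaces is compact. It remains to check that $\tilde{H}$ is a \emph{closed} subset of this product: but $\tilde{H}$ was defined as the closure of $\pi(H)$ in $G\quot H$, and $G\quot H$ is itself closed in $\mathrm{Sym}(G/H)$, so $\tilde{H}$ is closed in $\mathrm{Sym}(G/H)$ and therefore closed in the compact product. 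A closed subset of a compact space is compact, so $\tilde{H}$ is compact, and local compactness of $G\quot H$ follows.

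The main obstacle I anticipate is verifying cleanly that the subspace topology on $\tilde{H}$ inherited from $\mathrm{Sym}(G/H)$ agrees with the product topology under the coordinate embedding $x \mapsto (x(gH))_{gH}$ — this is essentially the statement that pointwise convergence is the product topology, which is immediate from the definition of the basic open sets $\mathcal{N}(x,\ff)$, but it must be stated so that Tychonoff applies to the correct topology. The only genuinely mathematical input beyond general topology is the finiteness of each orbit $H.gH$, which is exactly where the commensuration of $\langle a \rangle$ by $\mathrm{BS}(m,n)$ (equivalently the definition of commensurated subgroup from Section~\ref{sec:comm}) enters; everything else is Tychonoff plus the fact that closed subsets of compact sets are compact.
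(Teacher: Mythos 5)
Your overall strategy is the paper's strategy: use Lemma~\ref{lem:index_identity} to identify the $\tilde H$-orbits on $G/H$ with the $\pi(H)$-orbits, observe that commensuration makes each orbit $H.(gH)$ finite (of size $[H:H\cap gHg^{-1}]$, exactly as you compute), and then obtain compactness of the open subgroup $\tilde H$ from Tychonoff plus a closedness argument. However, your closedness step has a genuine gap. You embed $\tilde H$ into the product $P=\prod_{gH\in G/H} O(gH)$ and argue that $\tilde H$ is closed in $\mathrm{Sym}(G/H)$, ``and therefore closed in the compact product.'' That inference is invalid, because $P$ is not contained in $\mathrm{Sym}(G/H)$: its elements are arbitrary functions sending each coset into its finite $H$-orbit, including non-injective and non-surjective ones. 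Being closed in $\mathrm{Sym}(G/H)$ says nothing about limits that escape $\mathrm{Sym}(G/H)$; a priori a net in $\tilde H$ could converge pointwise to a point of $P\setminus\mathrm{Sym}(G/H)$, and then $\tilde H$ would fail to be closed in $P$. This is the same phenomenon as $\mathrm{Sym}(\mathbb N)$ failing to be closed in the product space $\mathbb N^{\mathbb N}$.

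The gap is fixable, and the fix is precisely what the paper's choice of ambient product accomplishes. The finite orbits $H.(gH)$ partition $G/H$, and every element of $\tilde H$ permutes each orbit (a group element maps each orbit onto itself, and an injection of a finite set into itself is a bijection). Hence $\tilde H$ actually sits inside $\prod_{B} \mathrm{Sym}(B)$, the product over the blocks $B=H.(gH)$ of their finite symmetric groups. This product is compact by Tychonoff, and—unlike your $P$—it is contained in $\mathrm{Sym}(G/H)$, since a function permuting every block of a partition is a bijection; so closedness of $\tilde H$ in $\mathrm{Sym}(G/H)$ does transfer directly to closedness in $\prod_B \mathrm{Sym}(B)$, and compactness of $\tilde H$ follows. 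Alternatively you may keep your product $P$, but you must then supply the missing argument: a pointwise limit of elements of $\tilde H$ agrees, on each finite block, with some element of $\tilde H$ eventually, hence restricts to a permutation of each block, hence is a bijection of $G/H$; only then does closedness of $\tilde H$ in $\mathrm{Sym}(G/H)$ finish the proof.
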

\begin{proof}
Lemma~\ref{lem:index_identity} implies that $H$ and $\tilde{H}$ act on $G/H$ by permuting cosets in the  blocks $H(gH)$ ($g\in G$) so that, making a natural identification,
\begin{equation}
\label{eq:profinite}
\tilde{H}\leq \prod_{H(gH)\subset G/H} \mathrm{Sym}(H(gH)).
\end{equation}
It may be checked that $\tilde{H}$ is a closed subgroup under this identification and that the topology on $\tilde{H}$ is equivalent to the subspace topology for the product topology on $\prod_{H(gH)\subset G/H} \mathrm{Sym}(H(gH))$. Since commensurability of $H$ implies that
each block $H(gH)$ is finite, it follows by Tychonov's theorem (see for example \cite{\Patty} Theorem 6.50) that $\tilde{H}$ is compact, and is thus a compact, open neighbourhood of $\ident$.\end{proof}

The subgroup $\langle a\rangle$ is commensurated by the group $\mathrm{BS}(m,n)$ for $m,n\neq 0$ and the kernel of the map $\pi:\mathrm{BS}(m,n)\rightarrow \mathrm{Sym}(\mathrm{BS}(m,n)/\langle a\rangle)$ is a subgroup of $\langle a\rangle$.
Then $\ker \pi$ is trivial if $|m|\neq |n|$, by Lemma \ref{lem:BSnormal}, and $\mathrm{BS}(m,n)$ embeds as a dense subgroup of the 
 totally disconnected, locally compact group $\mathrm{BS}(m,n)\quot \langle a \rangle$. This group is denoted by $G_{m,n}$ in the sequel. 
 
If $|m|=|n|$, we have $a^i(gH)=gH$ for all $g\in \mathrm{BS}(m, \pm m)$ if and only if $m$ divides $i$ and so 
 $\ker(\pi)=\langle a^m\rangle$.
Factoring $\mathrm{BS}(m,\pm m)$ by $\ker(\pi)$ then makes the commensurated subgroup $\langle a\rangle/\langle a^m\rangle$ finite, whence $\tilde H$ is finite and discrete. Hence 
$$
(\mathrm{BS}(m,\pm m)/\langle a^m\rangle)\quot(\langle a\rangle/\langle a^m\rangle)\cong \mathrm{BS}(m,\pm m)/\langle a^m\rangle
$$ (which we will denote as $G_{m,\pm m}$) and is discrete. Since the construction yields nothing new in this case, the focus of the rest of the article is on the case $|m|\neq |n|$.
 
%Assuming $|m|\neq |n|$, 
%Equation (\ref{eq:profinite}) tells us that 
%  $\overline{\pi(\left\langle a\right\rangle)}$ is isomorphic to a closed subgroup of a product \textbf{what is wrong with this sentence??}
%of finite permutation groups $\mathrm{Sym}(\left\langle a\right\rangle.g\left\langle a\right\rangle)$ for $g\left\langle a\right\rangle\in \mathrm{BS}(m,n)/\left\langle a\right\rangle$. The group $\left\langle a\right\rangle$, and hence $\overline{\pi(\left\langle a\right\rangle)}$, acts by cyclicly permuting the cosets in $\left\langle a\right\rangle.g\left\langle a\right\rangle$. Therefore Equation \ref{eq:profinite} implies that
%$$
%\overline{\pi(\left\langle a\right\rangle)} \leq \prod \mathbb{Z}/d\mathbb{Z},
%$$
%where the product is indexed by the blocks $\left\langle a\right\rangle.g\left\langle a\right\rangle$ in $\mathrm{BS}(m,n)/\left\langle a\right\rangle$, and $d$ is the order of the corresponding $\left\langle a\right\rangle.g\left\langle a\right\rangle$. 
%xIn Section \ref{sec:localstructure} we identify which integers $d$ in fact appear, to give a description of the fine structure of the topology on $G_{m,n}$. 

\subsection{An alternative construction of $G_{m,n}$}
%Embedding  $\mathrm{BS}(m,n)$ into the group of automorphisms of its Bass-Serre tree} \label{subsec:BassSerre}

Gal and Januszkiewicz  \cite{\GalJanHNN} embed $\mathrm{BS}(m,n)$ into a topological group as follows.
Recall 
 the {\em Bass-Serre tree} for a graph of groups \cite{\Bass, \Serre}. In the case of  
 $\mathrm{BS}(m,n)$ the tree has a vertex for each  coset of $\langle a \rangle$, and $(u\langle a\rangle,v\langle a\rangle)$ is a directed edge labeled $t^{\pm 1}$ if $v\langle a\rangle=ua^it^{\pm 1}\langle a\rangle$. As an example, part of the Bass-Serre tree for the group  $\mathrm{BS}(2,3)$ is shown in Figure \ref{fig:BassSerre}. Note that each vertex will have degree 5 in this example (two outgoing edges labeled $t$ and three labeled $t^{-1}$).
%Readers familiar with the Cayley graph of  $\mathrm{BS}(m,n)$ may recognise the tree as a {\em side-on view}.

\begin{center}
\begin{figure}[htb]
\begin{tikzpicture}[scale=.77, ->,>=stealth',shorten >=1pt,auto,node distance=2.5cm,
scale=1.3]
\tikzstyle{every state}=[fill=white,draw=black,text=black]

         \node  [state] (a) at (5,5) {$\epsilon$};
   \node  [state] (ta) at (7,8) {$t$};
   \node  [state] (ata) at (3,8) {$at$};

   \node  [state] (Ta) at (8,2) {$t^{-1}$};
   \node  [state] (aTa) at (5,2) {$at^{-1}$};
      \node  [state] (aaTa) at (2,2) {$a^2t^{-1}$};
   
 \path (a) edge [left] node {$t$} (ta);   
 %  \path (ta) edge [bend left, above] node {$t^{-1}$} (a);
    \path (a) edge [left] node {$t$} (ata);   
 %  \path (ata) edge [bend left, above] node {$t^{-1}$} (a);

    \path (Ta) edge [left] node {$t$} (a);   
 %  \path (a) edge [bend left, above] node {$t^{-1}$} (Ta);
    \path (aTa) edge [left] node {$t$} (a);   
 %  \path (a) edge [bend left, above] node {$t^{-1}$} (aTa);
       \path (aaTa) edge [left] node {$t$} (a);   
 %  \path (a) edge [bend left, above] node {$t^{-1}$} (aaTa);

     \node  [state] (taTa) at (7,5) {$tat^{-1}$};
   \node  [state] (taaTa) at (9,5) {$ta^2t^{-1}$};
         \path (taaTa) edge [left] node {$t$} (ta);   
           \path (taTa) edge [left] node {$t$} (ta);

     \node  [state] (tata) at (2,11) {$tat$};
   \node  [state] (atata) at (4,11) {$atat$};
         \path (ata) edge [left] node {$t$} (tata);   
           \path (ata) edge [left] node {$t$} (atata);

     \node  [state] (tta) at (6,11) {$t^2$};
   \node  [state] (atta) at (8,11) {$at^2$};
         \path (ta) edge [left] node {$t$} (tta);   
           \path (ta) edge [left] node {$t$} (atta);

     \node  [state] (ataTa) at (3,5) {$atat^{-1}$};
   \node  [state] (ataaTa) at (1,5) {$ata^2t^{-1}$};
         \path (ataaTa) edge [left] node {$t$} (ata);   
           \path (ataTa) edge [left] node {$t$} (ata);

   \end{tikzpicture}
 \caption{Part of the Bass-Serre tree for $\mathrm{BS}(2,3)$. Edges labeled $t^{-1}$ correspond to traveling in reverse direction along edges labeled $t$.}

   \label{fig:BassSerre}
\end{figure}
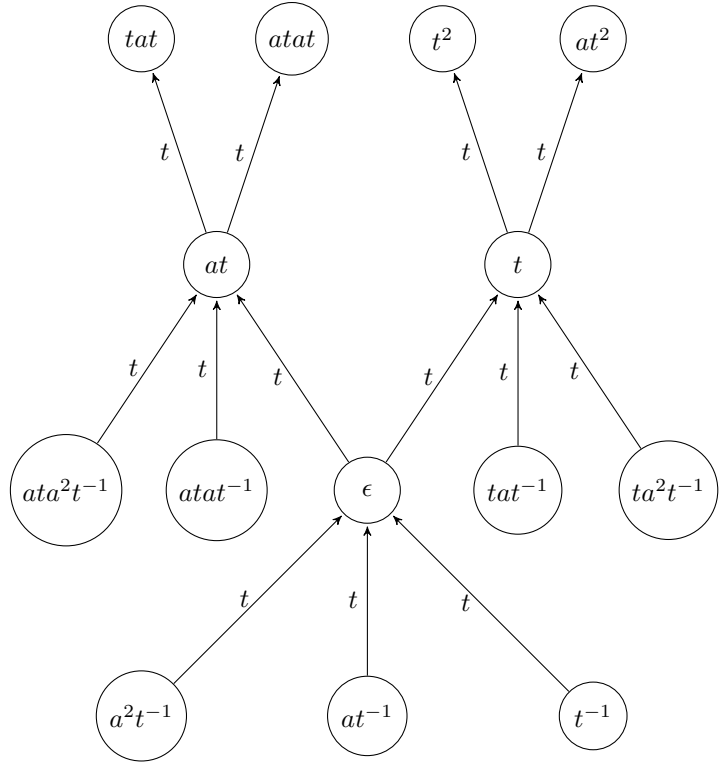
\end{center}

Let $\mathrm{Aut}(T_{m,n})$ be the  group of automorphisms of the Bass-Serre tree for $\mathrm{BS}(m,n)$ with vertex set $V(T_{m,n})$, and let   $\{N(x,\mathcal F) \ | \ x\in \mathrm{Aut}(T_{m,n}), F \subset^{\mathrm{finite}} V(T_{m,n})\}$ be a base of neighborhoods for a topology on $\mathrm{Aut}(m,n)$, where 
$$N(x, F)=\{\beta\in\mathrm{Aut}(X) \ | \ x.v=\beta.v \ \mathrm{for}  \ \mathrm{all} \ v\in F\}.$$ Since vertices correspond to (left) cosets of $\langle a\rangle$ in $\mathrm{BS}(m,n)$ then $\mathrm{Aut}(T_{m,n})\subseteq \mathrm{Sym}(\mathrm{BS}(m,n)/\langle a\rangle)$. 

To see that the closure of $\mathrm{BS}(m,n)$ in  $\mathrm{Aut}(T_{m,n})$ coincides with $G_{m,n}$ defined above, if $x\in G_{m,n}$ then either $x\in\mathrm{BS}(m,n)$ or every open set  in $\mathrm{Sym}(\mathrm{BS}(m,n)/\langle a\rangle)$ containing $x$ contains an element of $\mathrm{BS}(m,n)$. So for every finite set of cosets (vertices of $T_{m,n}$) $x$ agrees with some element of $\mathrm{BS}(m,n)$ and so preserves adjacencies of the Bass-Serre tree.

%The topology is locally compact since ...
%and the topology is non-discrete since the stabilizer of a vertex is non-finite.

%Denote the closure of  $\mathrm{BS}(m,n)$ in $\mathrm{Aut}(X)$ by  $X_{m,n}$.

%\begin{lem}If $x\in X_{1,n}\cap \mathrm{BS}(1,n)$ 
%then $s(x)=1$ if $x$ has nonpositive $t$-exponent sum and $s(x) =n^{\rho}$ if $x$ has positive $t$-exponent sum $\rho$.
% \end{lem}
%\begin{proof} By lemma -- $x=t^pa^qt^r$ where $p\leq 0 $ and $r\geq 0$ (and $q=0$ implies $pr=0$). So $x$ is conjugate to a $t^{\rho a^q}$. Let $V$ be the stabilizer of  $\langle a\rangle$ in $X_{m,n}$. Then $a^{q}Va^{-q}$ fixes $\langle a\rangle$  so is contained in $V$, so $[a^{q}Va^{-q} : a^{q}Va^{-q}\cap V]=1$.

%If $\rho<0$ then 
%\end{proof}

%If $x\in \mathrm{BS}(m,n)$ is equal to a word of zero $t$-exponent sum (in the original sense, that is, prior to our extended notion of exponent sum for elements of $G_{m,n}$)
%$then we claim that the scale of $w$ is 1. To see this we employ the tidying procedure: consider the path from $\langle a\rangle$ to.... the set $V=....$ is tidy for $w$ since 

%In future work \cite{\kclosure} the authors and Banks give more detail of these computations, computing scales for graphs of groups as well as their {\em $k$-closures}.

\section{Computing scales for $G_{m,n}$}\label{sec:compute}

In this section the scales of elements of $G_{m,n}$ are computed when $|m|\neq |n|$. For convenience we will abuse notation  and 
identify elements and subsets of 
 $\mathrm{BS}(m,n)$  with their images under the embedding $\pi$.

Since $\mathrm{BS}(m,n)$ is dense in $G_{m,n}$, 
 Corollary \ref{cor:dense} shows that in order to compute scales  in $G_{m,n}$, it
suffices to compute the scale of elements in $\mathrm{BS}(m,n)$. If $V$ is a compact open subgroup of $G_{m,n}$ then  Corollary
\ref{cor:indexG} shows that the index  $[V: x^{-1}Vx\cap V]$ is the same as the index  $[U: x^{-1}Ux\cap U]$ where $U$ is the intersection of $V$ with $\mathrm{BS}(m,n)$. It follows that scales in  $G_{m,n}$ can be computed by working entirely in $\mathrm{BS}(m,n)$.

We start by considering the case when one of $m,n$ is a proper divisor of the other.  Note that this includes the case that one of $m,n$ is $\pm 1$, in which case the scale could be computed more easily by working directly with the topological matrix group in Equation~(\ref{eq:matrix_group2}), or with its dense subgroup described in Equation~(\ref{eq:matrix_group}).

Suppose that $n=mr$ with $|r|>1$. Then 
\[\begin{array}{llll}
t^{-1}\langle a^m\rangle t\cap \langle a^m\rangle&=&\langle a^m\rangle,\\
t^{-1}\langle a^{mr^j}\rangle t\cap \langle a^m\rangle&=&\langle a^{mr^{j-1}}\rangle, & \mathrm{and}\\
t\langle a^{mr^i}\rangle t^{-1}\cap \langle a^m\rangle&=&\langle a^{mr^{i+1}}\rangle\end{array}\]
for any $i\geq 0,j>0$. 
These facts may encoded in a graph.
Define  $\Lambda$ to be the   labeled directed graph having nodes  $N=\{mr^i \ | \ i\in\N\}$,   and directed edges  \[E=\left\{(m,m), (mr^j,mr^{j-1}), (mr^i,mr^{i+1}) \ | \ j>0,i\geq 0\right\},\] where the loop $(m,m)$ and  edges $(mr^j,mr^{j-1})$ are labeled $t$, and edges $(mr^i,mr^{i+1}) $ are labeled $t^{-1}$.
Then 
 $(x,y)$ is an edge labeled $t^{\epsilon}$ if and only if $$t^{-\epsilon}\langle a^x \rangle t^{\epsilon}\cap \langle a^m\rangle = \langle a^y\rangle.$$
 A picture of part of  $\Lambda$  is shown in Figure \ref{diagramLambda}.

\begin{center}
\begin{figure}[htb]
\begin{tikzpicture}[scale=.77, ->,>=stealth',shorten >=1pt,auto,node distance=2.5cm,
scale=1.3]
\tikzstyle{every state}=[fill=white,draw=black,text=black]

         \node  [state] (mllll) at (1,2) {$m$};
   \node  [state] (lnnn) at (3,2) {$mr$};
      \node  [state] (lnnm) at (5,2) {$mr^2$};
      \node  [state] (lnmm) at (7,2) {$mr^3$};
      \node  [state] (lmmm) at (9,2) {$mr^4$};
       \node   (nllll) at (10.8,2.1) {};
  \node   (nllllx) at (10.8,1.9) {};
%level 5

\path (mllll) edge [loop, out=210, in=150, distance=8mm] node [label={[label distance=-10pt] 10:$t$}]  {} (mllll);

   \path (mllll) edge [bend left, above] node {$t^{-1}$} (lnnn);
   \path (lnnn) edge [bend left, below] node {$t$} (mllll);   
   
   \path (lnnn) edge [bend left, above] node {$t^{-1}$} (lnnm);
   \path (lnnm) edge [bend left, below] node {$t$} (lnnn);   
   \path (lnnm) edge [bend left, above] node {$t^{-1}$} (lnmm);
   \path (lnmm) edge [bend left, below] node {$t$} (lnnm);         
   \path (lnmm) edge [bend left, above] node {$t^{-1}$} (lmmm);
   \path (lmmm) edge [bend left, below] node {$t$} (lnmm);              
         \path (lmmm) edge [bend left, above] node {$t^{-1}$} (nllll);
   \path (nllllx) edge [bend left, below] node {$t$} (lmmm);
   \end{tikzpicture}
 \caption{Part of the graph $\Lambda$.  An edge from $x$ to $y$ labeled $t^{\epsilon}$ means that $t^{-\epsilon}\langle a^x\rangle t^{\epsilon} \cap \langle a^m\rangle=\langle a^y\rangle$.}

   \label{diagramLambda}
\end{figure}
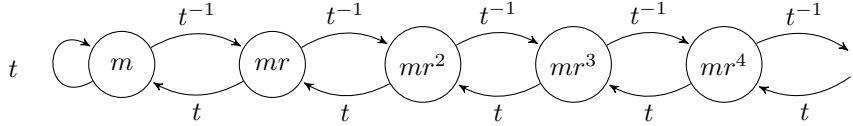
\end{center}

Recall Britton's lemma  and the notion of a pinch from Section \ref{sec:bsintro}.
\begin{lem}\label{lem:intersection}
Let $u=a^{\eta}t^{\epsilon}v$ with $\eta\in\mathbb Z$ and $\epsilon=\pm 1$  be a freely reduced word in $\mathrm{BS}(m,n)$ with no pinches. Then 
\[\begin{array}{lll}
u^{-1} \left\langle a^i\right\rangle u\cap  \left\langle a\right\rangle & = & v^{-1}\left(t^{-\epsilon} \left\langle a^i\right\rangle t^{\epsilon}\cap  \left\langle a\right\rangle\right)v\cap  \left\langle a\right\rangle.\end{array}\]
\end{lem}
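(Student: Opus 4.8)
The plan is to first strip off the commuting $a$-power and then reduce the identity to a single assertion about conjugates of powers of $a$, which I will settle with Britton's lemma. Since $u=a^{\eta}t^{\epsilon}v$ and $a^{\eta}$ commutes with every element of $\langle a^i\rangle$, we have $u^{-1}\langle a^i\rangle u=v^{-1}t^{-\epsilon}\langle a^i\rangle t^{\epsilon}v$, so the left-hand side of the identity equals $v^{-1}t^{-\epsilon}\langle a^i\rangle t^{\epsilon}v\cap\langle a\rangle$. Writing $A=t^{-\epsilon}\langle a^i\rangle t^{\epsilon}$ and $B=\langle a\rangle$, the statement to be proved becomes the set identity $v^{-1}Av\cap B=v^{-1}(A\cap B)v\cap B$.

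The inclusion $\supseteq$ is immediate from $A\cap B\subseteq A$ by monotonicity of conjugation and intersection, so all the content lies in $\subseteq$. Take $x\in v^{-1}Av\cap B$; then $x\in\langle a\rangle$, say $x=a^k$, and $vxv^{-1}\in A$, i.e. $t^{\epsilon}va^kv^{-1}t^{-\epsilon}\in\langle a^i\rangle\subseteq\langle a\rangle$. To place $x$ in the right-hand side it suffices to show $vxv^{-1}\in B=\langle a\rangle$, for then $vxv^{-1}\in A\cap B$ gives $x\in v^{-1}(A\cap B)v$, while $x\in B$ holds by assumption. Thus everything reduces to the claim: if $g:=va^kv^{-1}$ satisfies $t^{\epsilon}gt^{-\epsilon}\in\langle a\rangle$, then $g\in\langle a\rangle$.

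I would prove this claim by contraposition, using that $v$, and indeed $t^{\epsilon}v$ together with all of their prefixes, is freely reduced with no pinches, being a subword of $u$. Assume $g\notin\langle a\rangle$ and reduce the word $va^kv^{-1}$ from the centre outwards: trailing $a$-syllables of $v$ commute through $a^k$ and cancel against their inverses, while a trailing syllable $t^{\delta}$ of the current prefix can be absorbed only when the central exponent is divisible by the relevant modulus ($m$ when $\delta=1$, $n$ when $\delta=-1$), in which case $t^{\delta}a^{c}t^{-\delta}$ collapses to a power of $a$. Since $g\notin\langle a\rangle$, this process must halt at a freely reduced, pinch-free word $g=Va^{c}V^{-1}$ in which $V$ is a prefix of $v$ ending in some $t^{\delta}$ and the central exponent $c$ is \emph{not} divisible by the relevant modulus, so $t^{\delta}a^{c}t^{-\delta}$ is not a pinch. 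I would then conjugate by $t^{\epsilon}$: since $t^{\epsilon}V$ is a prefix of the freely reduced, pinch-free word $t^{\epsilon}v$, it is itself freely reduced with no pinches, so $t^{\epsilon}gt^{-\epsilon}=(t^{\epsilon}V)a^{c}(t^{\epsilon}V)^{-1}$ is once more freely reduced and pinch-free, with the non-pinch factor $t^{\delta}a^{c}t^{-\delta}$ still present. A nonempty freely reduced word without pinches that contains a $t$-letter cannot equal a power of $a$ (by Britton's lemma, since its reduction to the identity against any $a^{-d}$ would require a pinch it does not have), so $t^{\epsilon}gt^{-\epsilon}\notin\langle a\rangle$, contradicting the hypothesis. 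Hence $g\in\langle a\rangle$, which establishes the claim and the remaining inclusion.

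The main obstacle is the bookkeeping in the reduction step: I must verify that no new pinch is created at the junctions of $Va^{c}V^{-1}$, nor at those of $(t^{\epsilon}V)a^{c}(t^{\epsilon}V)^{-1}$. This is exactly where the hypothesis that $u=a^{\eta}t^{\epsilon}v$ has no pinches is indispensable, for it ensures that prepending $t^{\epsilon}$ to a prefix of $v$ creates neither a free cancellation nor a pinch; consequently the invariant ``$V$ ends in $t^{\delta}$ with non-divisible central exponent'' survives the conjugation by $t^{\epsilon}$. Once the two standard facts are recorded — that every subword of a freely reduced, pinch-free word is again freely reduced and pinch-free, and that such a word containing a $t$-letter lies outside $\langle a\rangle$ — the argument closes.
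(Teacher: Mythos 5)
Your proof is correct, but it takes a genuinely different route from the paper's, and the comparison is instructive. Both arguments rest on Britton's lemma plus the fact that subwords of the pinch-free word $u$ (in particular $t^{\epsilon}v$ and all of its prefixes) are freely reduced and pinch-free. The paper, however, parametrizes an element of the left-hand side from the $\langle a^i\rangle$ side: it is $u^{-1}(a^i)^ju$ for some $j$, assumed to lie in $\langle a\rangle$, and written out it is $v^{-1}\bigl(t^{-\epsilon}a^{ij}t^{\epsilon}\bigr)v$ (the conjugating $a^{\pm\eta}$ collapses into the central $a$-power), a word that is already freely reduced and whose only possible pinch is the central $t^{-\epsilon}a^{ij}t^{\epsilon}$. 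A single application of Britton's lemma then forces that central subword to lie in $\langle a\rangle$, which is exactly the desired inclusion; the paper's whole proof is three sentences. You parametrize from the $\langle a\rangle$ side instead, taking $x=a^k$ with $va^kv^{-1}\in t^{-\epsilon}\langle a^i\rangle t^{\epsilon}$, and the price is that $va^kv^{-1}$ need not be pinch-free as written, so you must run the centre-outward reduction to $Va^{c}V^{-1}$, maintain the invariant that the central junction is not a pinch, and argue by contraposition. Those extra steps do all check out: $V$ is nonempty precisely because $g\notin\langle a\rangle$; prepending $t^{\epsilon}$ creates neither a cancellation nor a new pinch, since $t^{\epsilon}V$ is a prefix of the pinch-free word $t^{\epsilon}v$ and a pinch straddling the two new letters $t^{\epsilon},t^{-\epsilon}$ would force $V$ to contain no $t$-letters; and your closing fact (a nonempty freely reduced pinch-free word containing a $t$-letter cannot lie in $\langle a\rangle$) is a standard consequence of Britton's lemma, proved exactly as you indicate. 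So your argument is sound and self-contained; it simply accomplishes with a page of bookkeeping what the paper's choice of parametrization --- conjugating generators of $\langle a^i\rangle$ rather than elements of $\langle a\rangle$ --- achieves in one step.
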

\begin{proof}
 If the word $u^{-1}(a^i)^j u=v^{-1}t^{-\epsilon} a^{-\eta}(a^i)^j  a^{\eta}t^{\epsilon}v$ is in $ \left\langle a\right\rangle $
then it must contain a pinch.  There is no pinch within $t^{\epsilon}v$  (or $v^{-1}t^{-\epsilon}$) so the pinch must be $t^{-\epsilon}(a^i)^j t^{\epsilon}$, so this subword must be in $ \left\langle a\right\rangle $.\end{proof}

\begin{lem}\label{lem:LambdaGraph}
Let $w$ be a freely reduced word with no pinches, and let $p(w)$ be the word in the free monoid generated by $t$ and $t^{-1}$ obtained by removing all $a^{\pm 1}$ letters from $w$.  Consider the path in $\Lambda$ starting at the node $mr^i$, whose edge labels follow the sequence $p(w)$.
Then $$w^{-1}\langle a^{mr^i}\rangle w\cap \langle a^m\rangle=\langle a^{mr^k}\rangle$$ where $mr^k$ is the label of the node at the end of this path.
\end{lem}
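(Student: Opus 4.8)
The plan is to prove the statement by induction on the length of $p(w)$, using Lemma~\ref{lem:intersection} as the inductive engine. The key observation is that Lemma~\ref{lem:intersection} lets me peel off one $t^{\pm 1}$ letter from the front of $w$ at a time, while the graph $\Lambda$ was constructed precisely so that a single edge encodes one application of the identities $t^{-\epsilon}\langle a^x\rangle t^{\epsilon}\cap\langle a^m\rangle=\langle a^y\rangle$. So the combinatorial path in $\Lambda$ and the algebraic peeling process should march in lockstep.

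First I would set up the induction. The base case is $p(w)$ empty, meaning $w=a^\eta$ for some $\eta\in\mathbb{Z}$; then $w^{-1}\langle a^{mr^i}\rangle w=\langle a^{mr^i}\rangle$ since powers of $a$ commute, and the path of length zero stays at the node $mr^i$, so $\langle a^{mr^k}\rangle=\langle a^{mr^i}\rangle$ as required. For the inductive step, write $w=a^\eta t^\epsilon v$ as in Lemma~\ref{lem:intersection} (collecting the leading $a$-letters and the first $t^{\pm1}$ letter). Applying that lemma with the group $\langle a^{mr^i}\rangle$ in place of $\langle a^i\rangle$ gives
\[
w^{-1}\langle a^{mr^i}\rangle w\cap\langle a\rangle = v^{-1}\bigl(t^{-\epsilon}\langle a^{mr^i}\rangle t^\epsilon\cap\langle a\rangle\bigr)v\cap\langle a\rangle.
\]
Here I must reconcile the fact that the lemma intersects with $\langle a\rangle$ whereas the target statement intersects with $\langle a^m\rangle$; since the final result always lands in $\langle a^m\rangle$ (all node labels are multiples of $m$), intersecting with $\langle a\rangle$ versus $\langle a^m\rangle$ coincides once we know the inner subgroup is already contained in $\langle a^m\rangle$. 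The inner term $t^{-\epsilon}\langle a^{mr^i}\rangle t^\epsilon\cap\langle a^m\rangle$ equals $\langle a^{mr^{i'}}\rangle$, where $mr^{i'}$ is exactly the node reached from $mr^i$ along the single edge labeled $t^\epsilon$ — this is the defining property of the edges of $\Lambda$. Then conjugating by $v$ and intersecting again, I apply the inductive hypothesis to the shorter word $v$ starting from node $mr^{i'}$, which follows the remaining labels of $p(w)$ and lands at the claimed terminal node $mr^k$.

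The step I expect to be the main obstacle is handling the interaction between conjugation by $v$ and the intersection with $\langle a\rangle$ cleanly — specifically justifying that $v^{-1}\langle a^{mr^{i'}}\rangle v\cap\langle a\rangle$ is computed by the inductive hypothesis even though the inductive hypothesis is phrased for a subgroup of the form $\langle a^{mr^{i'}}\rangle$ conjugated by $v$ and then intersected with $\langle a^m\rangle$. I would resolve this by verifying that the edges of $\Lambda$ are well-defined, i.e. that starting from any node $mr^{i'}$ and applying $t^{\pm1}$ always produces another node of the form $mr^j$ with $j\geq 0$ (so the path never leaves the graph); this is guaranteed by the three displayed identities preceding the definition of $\Lambda$, together with the fact that $|r|>1$ ensures $t^{-1}\langle a^{mr^{j}}\rangle t\cap\langle a^m\rangle=\langle a^{mr^{j-1}}\rangle$ stays within $N$ for $j>0$ and collapses to the loop at $m$ when $j=0$. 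A secondary subtlety is that removing the $a$-letters to form $p(w)$ relies on $w$ being freely reduced with no pinches, so that each $t^\epsilon$ peeled off genuinely corresponds to a graph edge rather than being cancelled; this hypothesis is exactly what Lemma~\ref{lem:intersection} requires, so it propagates correctly to the suffix $v$, which is itself freely reduced with no pinches.
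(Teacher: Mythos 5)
Your proposal is correct and follows essentially the same route as the paper's proof: induction on the number of $t^{\pm 1}$ letters, peeling off the leading $a^{\eta}t^{\epsilon}$ via Lemma~\ref{lem:intersection}, identifying that single step with an edge of $\Lambda$, and applying the inductive hypothesis to the suffix $v$ (which, as you note, inherits the freely-reduced, pinch-free property). The $\langle a\rangle$-versus-$\langle a^m\rangle$ bookkeeping you flag as the main obstacle is resolved in the paper by simply carrying the outer intersection with $\langle a^m\rangle$ through every line, so that $\bigl(v^{-1}\langle a^{mr^j}\rangle v\cap\langle a\rangle\bigr)\cap\langle a^m\rangle = v^{-1}\langle a^{mr^j}\rangle v\cap\langle a^m\rangle$ and the inductive hypothesis applies verbatim; your strengthened-induction fix (showing the $\langle a\rangle$-intersection already lands in $\langle a^m\rangle$) amounts to the same underlying fact.
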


\begin{proof}
If $w$ has no $t^{\pm 1}$ letters then the statement is clearly true.
Suppose for induction the statement is true for words with $q$ $t^{\pm 1}$ letters. Let $u$ be a freely reduced word with no pinches with $q+1$ $t^{\pm 1}$ letters, and write $u=a^{\eta}t^{\epsilon}v$, where $\epsilon=\pm 1$ and $\eta\in\mathbb Z$. 
Then by Lemma \ref{lem:intersection} we have
\[\begin{array}{lll}
u^{-1}\langle a^{mr^i}\rangle u\cap \langle a^m\rangle   & = & \left( v^{-1}t^{-\epsilon}\langle a^{mr^i}\rangle  t^{\epsilon}v\cap \langle a \rangle\right) \cap  \langle a^m\rangle\\
\\
 & = & \left( v^{-1} \left( t^{-\epsilon}\langle a^{mr^i}\rangle  t^{\epsilon}\cap \langle a \rangle\right)v \cap \langle a \rangle\right)\cap  \langle a^m\rangle\\
\\
 & = & \left( v^{-1} \langle a^{mr^j}\rangle v \cap \langle a \rangle\right)\cap  \langle a^m\rangle\\
   \end{array}\]
where $(mr^i,mr^j)$ is a  directed edge labeled $t^{\epsilon}$ in $\Lambda$.
The path starting at $mr^i$ labeled by $p(u)$ consists of this edge to $mr^j$, followed by a path labeled $p(v)$.
By inductive assumption $$ \left( v^{-1} \langle a^{mr^j}\rangle v \cap \langle a \rangle\right)\cap  \langle a^m\rangle= v^{-1} \langle a^{mr^j}\rangle v \cap  \langle a^m\rangle=\langle a^{mr^k}\rangle$$
since $mr^k$ is the endpoint of the path $p(v)$ starting at $mr^j$. The result follows.
\end{proof}

Consider the following example which shows how the previous lemma can be used to compute the scale. Let $w=t^{4}at^{-2}a$. Then by tracing the path $p(w)=t^4t^{-2}$ starting at $m$ through  $\Lambda$  we have
$w^{-1}\langle a^{m}\rangle w\cap \langle a^m\rangle= \langle a^{mr^2}\rangle$.
Since scale is invariant under conjugation, we could instead consider $u=t^{-2}at^4a$, in which case the path $p(u)=t^{-2}t^4$ starting at $m$ ends at $m$, so 
$u^{-1}\langle a^{m}\rangle u\cap \langle a^m\rangle= \langle a^{m}\rangle$, and we see that (the closure of) $\langle a^m\rangle$ is 
minimising for $u$, and so $s(w)=s(u)=1$.

This pre-conjugating step is the key to proving Proposition \ref{prop:divisors} below. We will need the following fact.

\begin{lem}\label{lem:conjugate}
Let $x\in \mathrm{BS}(m,n)$. Then $x$ is conjugate to a word $w\in\{a^{\pm 1},t^{\pm1}\}^*$ such that  $ww$ is freely reduced and contains no pinches. 
\end{lem}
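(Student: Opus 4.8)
The plan is to realize the desired $w$ as a \emph{cyclically reduced} representative of the conjugacy class of $x$: the requirements that $ww$ be freely reduced and contain no pinch are exactly the statement that no reduction of either kind is available when $w$ is read around a cycle. I would produce such a $w$ by a minimality argument. Among all words in $\{a^{\pm1},t^{\pm1}\}^*$ representing elements conjugate to $x$, order them first by their number of $t^{\pm1}$ letters and then, among those, by total length, and let $w$ be a minimal word in this lexicographic order (a minimum exists because both quantities are nonnegative integers). Recall that conjugate words have equal $t$-exponent sum by Lemma~\ref{lem:texpinvariant}, although this fact is not strictly needed.

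Minimality immediately makes $w$ \emph{internally} reduced. It is freely reduced, since free cancellation strictly shortens $w$ without raising the $t$-count; and it contains no pinch, since removing a pinch (replacing $ta^{cm}t^{-1}$ by $a^{cn}$, or $t^{-1}a^{cn}t$ by $a^{cm}$) lowers the number of $t^{\pm1}$ letters by two, contradicting minimality. It remains to control the junction in $ww$. For free reduction, since each copy of $w$ is freely reduced the only possible cancellation in $ww$ is between the last letter of $w$ and its first letter; were these mutually inverse, say $w=sus^{-1}$ with $s$ the first letter, then $w$ would be conjugate to the strictly shorter word $u=s^{-1}ws$, contradicting minimality. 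Hence $ww$ is freely reduced.

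For pinches I would argue by rotation. Since $w$ has no pinch, any pinch in $ww$ must span the junction, and because the interior of a pinch is a power of $a$ containing no $t$-letter, such a pinch must consist of the last $t$-letter of the first copy, say $t^{\epsilon}$, together with the first $t$-letter of the second copy, which is forced to be $t^{-\epsilon}$; in particular $w$ has at least two $t$-letters. Writing $w=a^{l}t^{-\epsilon}\beta t^{\epsilon}a^{j}$, with $a^{l}$ and $a^{j}$ the leading and trailing $a$-blocks and $t^{-\epsilon},t^{\epsilon}$ the first and last $t$-letters, conjugation by the prefix $a^{l}t^{-\epsilon}$ cyclically rotates $w$ to the conjugate word $\beta t^{\epsilon}a^{j+l}t^{-\epsilon}$, which now contains internally the pinch $t^{\epsilon}a^{j+l}t^{-\epsilon}$ that had been straddling the junction. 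Removing this pinch gives a word conjugate to $x$ with two fewer $t$-letters, once more contradicting minimality; so $ww$ has no pinch. The degenerate cases are immediate: if $w$ is empty then $x=1$, and if $w$ has no $t$-letter then $w=a^{j}$ and $ww=a^{2j}$ is freely reduced with no pinch.

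I expect the main obstacle to be the pinch-at-the-junction analysis, specifically verifying that a junction pinch is genuinely \emph{local} --- that it can only involve the last and first $t$-letters of the two copies and cannot wrap further around $w$ --- and then confirming that the rotation used to interiorize it really is a conjugation that strictly lowers the $t$-count. The remaining bookkeeping (matching the signs of the boundary $t$-letters, identifying the $a$-blocks, and checking that $a^{j+l}$ lies in the correct subgroup $\langle a^{m}\rangle$ or $\langle a^{n}\rangle$ so that $t^{\epsilon}a^{j+l}t^{-\epsilon}$ is indeed a pinch) is routine once the local structure at the junction has been pinned down.
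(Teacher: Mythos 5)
Your proposal is correct and takes essentially the same approach as the paper: the paper runs exactly your four reductions (free reduction, cyclic reduction at the seam, internal pinch removal, and conjugation by the prefix $a^{l}t^{-\epsilon}$ to interiorize a junction pinch) as a rewriting process that terminates because each move strictly decreases the order (number of $t^{\pm1}$ letters, then word length), which is the very order you minimize over. Your minimal-counterexample formulation and the paper's termination argument are interchangeable packagings of the same well-founded induction, so there is no gap.
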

\begin{proof}
Let $y\in\{a^{\pm 1},t^{\pm1}\}^*$ be a word  equal to the element $x$ in $ \mathrm{BS}(m,n)$, and consider the following four moves:
\begin{enumerate}\item 
if $y$ is not freely reduced, then removing a cancelling pair reduces the length of $y$, and gives a word $z$ equal to $y$ and shorter in length. 
\item if $y$ is freely reduced and $yy$ is not freely reduced, then  we must have $y=c^{\pm 1}zc^{\mp 1}$, so $z$ is conjugate  to $y$ and shorter in length. 
\item if $y$ contains a pinch, then removing gives a word $z$ equal to $y$ with fewer $t$ letters. 
\item if $y$ has no pinches and  $yy$ contains a pinch, then 
 we must have $y=a^it^{-1}vta^j$ with $i+j=km$, or  $y=a^itvt^{-1}a^j$ with $i+j=kn$. 
Conjugating by  $a^it$ [or $a^it^{-1}$ respectively] then removing the pinch we obtain a word $z=va^{nk}$ [or $z=va^{mk}$]  conjugate to $y$ with fewer $t$ letters. 
\end{enumerate}

Define an order $\prec$ on words $y,z\in  \{a^{\pm 1},t^{\pm1}\}^*$  by $z\prec y$ if $y$ has more $t$ letters than $z$, or they have the same number of $t$ letters but $y$ is longer than $z$. Then since each of the four moves produces a word that is shorter in this order, after a finite number of moves the word $w$ with the required properties is obtained.
\end{proof}

%%Note we need |r| for cases like BS(3,-6)

\begin{prop}\label{prop:divisors}
If $x\in  \mathrm{BS}(m,mr)$ for $|r|>1$ is equal to a word of $t$-exponent sum $\rho$, then $s(x)=1$ if $\rho\geq 0$ and $s(x)=|r|^{|\rho|}$ if $\rho<0$.
\end{prop}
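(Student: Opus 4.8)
The plan is to reduce the computation of $s(x)$ to tracing a path in the graph $\Lambda$, using the pre-conjugation step from Lemma~\ref{lem:conjugate} to put $x$ into a form where the minimizing subgroup is visible. By Proposition~\ref{prop:scale_properties}$(iv)$ the scale is conjugation-invariant, so I may replace $x$ by any conjugate. By Lemma~\ref{lem:conjugate}, $x$ is conjugate to a word $w$ such that $ww$ is freely reduced and pinch-free; crucially, such a $w$ has the property that \emph{every} positive power $w^k$ is also freely reduced with no pinches, since $w^k$ is a subword of $(ww)^k$-type concatenations and no pinch can form across the seams. This is exactly the hypothesis needed to apply Lemma~\ref{lem:LambdaGraph} to each power $w^k$.

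\textbf{The main computation.} Having fixed such a $w$, I would use M\"oller's formula (Theorem~\ref{thm:Moller}) with the compact open subgroup $V = \tilde{H}$ (the closure of $\langle a^m\rangle$, or more precisely the stabilizer subgroup built from $\langle a\rangle$), so that the index $[V : V\cap w^{-k}Vw^k]$ is computed in $\mathrm{BS}(m,mr)$ via Corollary~\ref{cor:indexG}. By Lemma~\ref{lem:LambdaGraph}, the intersection $w^{-k}\langle a^m\rangle w^k \cap \langle a^m\rangle$ equals $\langle a^{mr^{k}}\rangle$ where $mr^{k}$ is the endpoint of the path in $\Lambda$ starting at the node $m$ and following the label sequence $p(w^k) = p(w)^k$. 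The key observation is that the displacement along $\Lambda$ is governed by the $t$-exponent sum $\rho = \rho(w)$: because $\Lambda$ has a single absorbing loop at the node $m$ (labeled $t$) and otherwise each $t$ edge moves toward $m$ while each $t^{-1}$ edge moves away, a net negative $t$-exponent pushes the path outward to node $mr^{|\rho|k}$, giving index $|r|^{|\rho|k}$, whereas a net nonnegative $t$-exponent leaves the path trapped at $m$ (the loop absorbs surplus $t$'s), giving index $1$.

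\textbf{Extracting the scale.} Applying M\"oller's formula, when $\rho \geq 0$ the index stays equal to $1$ for all $k$, so $s(x) = \lim_k 1^{1/k} = 1$. When $\rho < 0$ the index is $[\,\langle a^m\rangle : \langle a^{mr^{|\rho|k}}\rangle\,] = |r|^{|\rho|k}$, and M\"oller's formula yields $s(x) = \lim_k \bigl(|r|^{|\rho|k}\bigr)^{1/k} = |r|^{|\rho|}$, as claimed. I should be careful to confirm that the node reached is genuinely $mr^{|\rho|k}$ rather than something smaller: since each of the $|\rho|k$ net $t^{-1}$ steps strictly increases the exponent (there is no return caused by the loop, which only activates on $t$ edges at node $m$), the path travels monotonically outward and the displacement equals the net count $|\rho|k$ exactly.

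\textbf{The anticipated obstacle.} The delicate point is verifying that the endpoint of the $\Lambda$-path for $w^k$ depends only on the $t$-exponent sum and not on the detailed interleaving of $t$ and $t^{-1}$ letters. For a single traversal this is false in general (intermediate excursions matter), but the pre-conjugation via Lemma~\ref{lem:conjugate} is what rescues the argument: for the specially chosen $w$ with $ww$ pinch-free, I expect that the path for $w$ never dips below its starting height before its final descent, so that the loop at $m$ is only ever engaged when the accumulated $t$-surplus would otherwise try to push the node index below $m$. Establishing this monotonicity/absorption behavior cleanly—essentially showing the pre-conjugated $w$ realizes its $t$-exponent sum as a ``balanced'' walk—is the crux, and I would prove it by induction on $k$ using the structure of $\Lambda$ together with the pinch-free guarantee on all powers of $w$.
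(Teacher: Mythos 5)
Your overall architecture (pre-conjugate via Lemma~\ref{lem:conjugate}, trace paths in $\Lambda$, extract the scale from Theorem~\ref{thm:Moller}) is viable, but the two path-endpoint claims your computation rests on are false, and the ``monotonicity/absorption'' property you defer to the end---which you correctly identify as the crux---is not a consequence of pinch-freeness, so the induction you sketch cannot succeed. Pinch-freeness of the powers of $w$ constrains group-theoretic cancellation only; it places no constraint on the $t$-profile of $w$, which is what governs the walk in $\Lambda$. Concretely, take $w=t^4at^{-2}a$ in $\mathrm{BS}(2,4)$ (the paper's own illustrative example following Lemma~\ref{lem:LambdaGraph}): here $\rho=2\geq 0$, every power $w^k$ is freely reduced and pinch-free, yet $p(w^k)$ starting at $m$ ends at $mr^{2}$ for every $k\geq 1$ (the leading $t$'s are wasted in the loop, the trailing $t^{-2}$ then pushes outward), so the index is $|r|^{2}$ for all $k$ and is never $1$, contradicting your claim that ``the index stays equal to $1$ for all $k$.'' Similarly, for $\rho<0$ the walk need not be ``monotonically outward'': with $w=tat^{-1}at^{-1}a$ (pinch-free when $|m|>1$, $\rho=-1$), the path $p(w^k)$ ends at $mr^{k+1}$, not $mr^{k}$. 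Also note a smaller slip: you must take $V=\overline{\langle a^m\rangle}$, not $\tilde H=\overline{\langle a\rangle}$, since Lemma~\ref{lem:LambdaGraph} computes intersections with $\langle a^m\rangle$.

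The M\"oller route can be repaired, because the formula needs only asymptotics, not exact indices. Writing $h_k$ for the endpoint exponent of $p(w^k)$, the reflected-walk recursion gives $h_k=\max\bigl(0,\max_s(\#t^{-1}(s)-\#t(s))\bigr)$, the maximum over suffixes $s$ of $p(w)^k$; every such suffix has the form $s_0\,p(w)^j$ with $s_0$ a suffix of $p(w)$ and $0\leq j\leq k-1$, whence $h_k$ is bounded by a constant independent of $k$ when $\rho\geq 0$, and $k|\rho|\leq h_k\leq C+(k-1)|\rho|$ when $\rho<0$. Feeding these bounds into Theorem~\ref{thm:Moller} gives $s(x)=1$ and $s(x)=|r|^{|\rho|}$ respectively; this suffix analysis is what you would need to prove in place of your exactness claims. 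The paper avoids the issue differently and does not use M\"oller here at all: after cyclically conjugating to $u=t^{-1}w_1ta^{\eta}$, it conjugates further by $t^{k}$ ($k$ the number of $t$ letters in $u$) to get $v=t^{-k}ut^{k}$, whose path provably never engages the loop, so a single index can be computed exactly; minimality of $\overline{\langle a^m\rangle}$ for $v$ is then deduced by running the $\rho\geq 0$ argument on $v^{-1}$ and applying Proposition~\ref{prop:scale_properties}(iii).
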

\begin{proof}
Suppose $x$ has nonnegative $t$-exponent sum $\rho$ and, applying  Lemma \ref{lem:conjugate}, choose $w$  conjugate to $x$ and with $ww$ freely reduced and containing no pinches.
We consider two cases.

 If $w$ contains no $t^{-1}$ letters, then  the path starting at $m$ labeled by $p(w)=t^{\rho}$ ends at $m$, so Lemma \ref{lem:LambdaGraph} we have
$w^{-1}\langle a^{m}\rangle w\cap \langle a^m\rangle=\langle a^{m}\rangle$
and $s(w)=s(x)=1$.

%If $w=w_1w_2$ where $p(w_1)=t^{-i}$ and $p(w_2)=t^{i+\rho}$ for $i>0$, then   the path starting at $m$ labeled by $p(w_1)$ ends at $mr^i$, and the path starting at %$mr^i$ labeled by $p(w_2)$ travels left 
%$i$ steps to the node $m$ then around the loop at $m$ $\rho$ times. So again Lemma \ref{lem:LambdaGraph} implies that
%$w^{-1}\langle a^{m}\rangle w\cap \langle a^m\rangle=\langle a^{m}\rangle$
%and $s(w)=s(x)=1$.

Otherwise $w$ contains a $t^{-1}$ letter, and since $\rho\geq 0$ it is cyclically conjugate to a word 
 $u=t^{-1}w_1ta^{\eta}$, which is freely reduced and contains no pinches since it is a subword of $ww$.
 Suppose $u$ contains $k$ $t$ letters, and consider the word $v=t^{-k}ut^k$. Note that $k>\rho$.
This word is freely reduced and contains no pinches since $u$ starts with $t^{-1}$ and ends with $ta^{\eta}$.

Now consider the path in $\Lambda$ starting at $m$ labeled $p(v)$. It first travels $k$ steps to the right, then since $p(u)$ has exactly $k$ $t$ letters, it travels left for each $t$ and right for each $t^{-1}$ in $p(u)$ (that is, we have ensured that  it never enters the loop at $m$).
Since the subpath labeled $p(u)$ started at $mr^k$ and the $t$-exponent sum of $u$ is $\rho\geq 0$, the subpath ends at $mr^{k-\rho}$.
Finally the path $p(v)$ travels $k-\rho$ edges left and the $\rho$ edges around the loop at $m$, and so $u^{-1}\langle a^{m}\rangle u\cap \langle a^m\rangle=\langle a^{m}\rangle$
and $s(u)=s(w)=s(x)=1$.

To compute the scale of $x$ of negative $t$-exponent $\rho$, 
choose $w$  conjugate to $x$ and with $ww$ freely reduced and containing no pinches. If $w$ contains no $t$ letters then $w^{-1}$ contains no $t^{-1}$ letters, so by the argument above  the closure of 
 $\langle a^m\rangle$ is minimising for $w^{-1}$, so is minimising for $w$ by Proposition \ref{prop:scale_properties}(iii). Applying Lemma \ref{lem:LambdaGraph}, the path starting at $m$ labeled by $p(w)=t^{\rho}$ ends at $mr^{|\rho|}$ so $w^{-1}\langle a^{m}\rangle w\cap \langle a^m\rangle=\langle a^{mr^{|\rho|}}\rangle$ which has index $|r|^{|\rho|}$ in $\langle a^m\rangle $, so
 $s(w)=s(x)=|r|^{|\rho|}$.

If $w$ contains $t$ letters, then $w$ is cylically conjugate to $u=t^{-1}u_1ta^{\eta}$, and so conjugate to  $v=t^{-k}ut^k$, where $k$ is the number of $t$ letters in $u$, and $u,v$ are
 freely reduced and contain no pinches.
 
Since $v^{-1}=t^{-k}(a^{-\eta}t^{_1}u_1^{-1}t)t^k$, the argument for the second case above shows that 
 $\langle a^m\rangle$ is minimising for $v^{-1}$,  so it is minimising for $v$.
Using  Lemma \ref{lem:LambdaGraph} once again, the 
 path in $\Lambda$ starting at $m$ labeled $p(v)$ travels $k$ steps to the right, then since $p(u)$ has exactly $k$ $t$ letters, it travels left for each $t$ and right for each $t^{-1}$ in $p(u)$ (once again,  we have ensured that it never enters the loop at $m$).

Since the subpath labeled $p(u)$ started at $mr^k$ and never enters the loop, and the $t$-exponent sum of $u$ is $\rho<0$, the subpath ends at $mr^{k+|\rho|}$.
Finally the path $p(v)$ travels $k$ edges left to end at $mr^{|\rho|}$, and so$v^{-1}\langle a^{m}\rangle v\cap \langle a^m\rangle=\langle a^{mr^{|\rho|}}\rangle$ which has index $|r|^{|\rho|}$ in $\langle a^m \rangle $
so $s(v)=s(x)=|r|^{|\rho|}$.
%\textbf{Note what I had before was incorrect the minimising subgroup depends on choosing the right conjugacy class rep!!}
\end{proof}

We now turn to the case where neither $m$ or $n$ is a divisor of the other. In this case we make use of M\"oller's formula (Theorem \ref{thm:Moller}), choosing $V$ to be  closure of $\langle a\rangle$. For this, it is necessary to compute $w^{-k}\langle a\rangle w^k\cap \langle a \rangle$ for arbitrary natural numbers $k$. 

\begin{cor}\label{cor:wk}
Let $x\in \mathrm{BS}(m,n)$. Then $x$ is conjugate to a word $w\in\{a^{\pm 1},t^{\pm1}\}^*$ such that  $w^k$ is freely reduced and contains no pinches for all $k\in\N$.
\end{cor}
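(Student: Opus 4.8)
The plan is to bootstrap directly from Lemma~\ref{lem:conjugate}, which already produces a word $w$ conjugate to $x$ with $ww$ freely reduced and containing no pinches. I would argue that this \emph{same} word $w$ satisfies the stronger conclusion for every $k$, by showing that each local feature of $w^k$ that we care about (an adjacent cancelling pair, or a pinch) is confined to at most two consecutive copies of $w$ and is therefore already present in $ww$.

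First I would dispose of the degenerate case. If $w$ has no $t^{\pm 1}$ letter then $w=a^j$ for some $j\in\mathbb Z$, so $w^k=a^{kj}$ is freely reduced with no pinches for all $k$. So assume $w$ contains at least one $t^{\pm 1}$ letter; then \emph{every} copy of $w$ inside $w^k$ contains a $t^{\pm 1}$ letter, which is the observation that drives the whole argument.

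For free reduction, an adjacent cancelling pair $cc^{-1}$ in $w^k$ either lies inside a single copy of $w$ or straddles the boundary between two consecutive copies. The former cannot occur since $w$, being a subword of $ww$, is freely reduced; the latter is exactly the central junction of $ww$, which is freely reduced by hypothesis. For pinches, recall that a pinch is a pair of opposite-sign $t^{\pm 1}$ letters with only $a^{\pm 1}$ letters between them (of exponent divisible by $m$ or $n$ as appropriate). Because every copy of $w$ contains a $t^{\pm 1}$ letter, such a pair cannot skip over an entire copy, so it either sits within one copy of $w$ or consists of the last $t^{\pm 1}$ of one copy and the first $t^{\pm 1}$ of the next, the intervening $a$-letters being the suffix of $w$ past its last $t^{\pm 1}$ followed by the prefix of $w$ up to its first $t^{\pm 1}$. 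In both cases this configuration appears verbatim in $ww$---internally in either copy, or across the central junction---so since $ww$ has no pinch, neither does $w^k$.

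The only point needing genuine care, and the main obstacle, is precisely this locality step: one must justify that a $t^{\pm 1}$-pair witnessing a potential pinch in $w^k$ involves letters from at most two adjacent copies of $w$, so that the two configurations recorded by $ww$ (internal and seam) truly exhaust all possibilities. That reduction rests entirely on every copy of $w$ carrying a $t^{\pm 1}$ letter; once it is in place the rest is routine bookkeeping.
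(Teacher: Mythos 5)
Your proposal is correct and follows essentially the same route as the paper's proof: take the $w$ from Lemma~\ref{lem:conjugate} and observe that any cancelling pair or pinch in $w^k$ spans at most two consecutive copies of $w$ (the paper phrases this as the pinch having the form $t^{\epsilon}uw^ivt^{-\epsilon}$ with $w^i$ consisting only of $a^{\pm 1}$ letters, forcing $i=0$), hence already appears in $ww$. Your write-up is in fact slightly more careful than the paper's, since you handle the case where $w$ has no $t^{\pm 1}$ letters explicitly.
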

\begin{proof}
By Lemma \ref{lem:conjugate}, $x$ is conjugate to $w$ such that  $ww$ is freely reduced and contains no pinches.
If $w^k$ contains a pair $yy^{-1}$  for some $y\in\{a^{\pm 1},t^{\pm1}\}$, then $yy^{-1}$ must lie either in $w$ or $ww$.
   If  $w^k$ contains a pinch, and the pinch is of the form $t^{\epsilon}uw^ivt^{-\epsilon}$ where $v,u$ are a prefix and suffix of $w$, then 
 $w^i$ must consist of $a^{\pm 1}$ letters, meaning $i=0$, so $ww$ contains the pinch.
\end{proof}

Define a directed graph 
 $\Omega$ as follows.  Let  $l=\lcm(m,n)$.
 The nodes of $\Omega$ are labeled by  integers from the set
 \[N=\left\{1, m\left(\frac{l}{n}\right)^i, l\left(\frac{l}{n}\right)^i, n\left(\frac{l}{m}\right)^i, l\left(\frac{l}{m}\right)^i \mid  i\in\N \right\}.\] These integers are distinct because neither $m$ nor $n$ divides the other.
 
 We define directed labeled edges in $\Omega$ in a similar way to edges in $\Lambda$: $(x,y)$ is an edge from $x$ to $y$ labeled $t^{\epsilon}$ for $\epsilon=\pm 1$ if $t^{-\epsilon}\langle a^x\rangle t^{\epsilon} \cap \langle a\rangle=\langle a^y\rangle$. Note that this time we intersect with $\langle a\rangle $ rather than  $\langle a^m\rangle $.

\begin{lem}
The edge set $E$ of $\Omega$ consists of the following edges, for all  positive integers $i,j$:
\[\begin{array}{llll}
(1,m), &(1,n), \\
\\
 (m,  m\left(\frac{l}{n}\right)), & (n, n\left(\frac{l}{m}\right)),\\
\\
 (m\left(\frac{l}{n}\right)^i,  m\left(\frac{l}{n}\right)^{i+1}), & (n\left(\frac{l}{m}\right)^i, n\left(\frac{l}{m}\right)^{i+1}),\\

\\
\mathrm{and}\\
\\
(n,m),\ & (m,n), \\
\\
(n\left(\frac{l}{m}\right)^i, l\left(\frac{l}{m}\right)^{i-1}), & (m\left(\frac{l}{n}\right)^i, l\left(\frac{l}{n}\right)^{i-1}), \\
\\
(l\left(\frac{l}{n}\right)^i, m\left(\frac{l}{n}\right)^{i+1}), & (l\left(\frac{l}{m}\right)^i, n\left(\frac{l}{m}\right)^{i+1}), \\
\\
( l\left(\frac{l}{n}\right)^{i}\left(\frac{l}{m}\right)^j,  l\left(\frac{l}{n}\right)^{i-1}\left(\frac{l}{m}\right)^{j+1}), & ( l\left(\frac{l}{n}\right)^{i}\left(\frac{l}{m}\right)^j,  l\left(\frac{l}{n}\right)^{i+1}\left(\frac{l}{m}\right)^{j-1})
\end{array}\]
where all edges in the left column are labeled $t$, and all edges in the right column are labeled $t^{-1}$.
\end{lem}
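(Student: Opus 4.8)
The plan is to reduce the entire statement to a single computation -- conjugating a cyclic subgroup by one stable letter $t^{\pm1}$ -- followed by greatest-common-divisor bookkeeping. Since $\langle a^x\rangle$ depends only on $|x|$, all node labels and intersections may be computed with positive integers, so I assume $m,n>0$. The defining relation $ta^mt^{-1}=a^n$ gives $t^{-1}a^{nk}t=a^{mk}$ for all $k$, and Britton's lemma shows that $t^{-1}a^st$ lies in $\langle a\rangle$ exactly when $n\mid s$ (otherwise $t^{-1}a^st$ is freely reduced and pinch-free, hence not a power of $a$). The first task is to establish the two master formulas, valid for every positive integer $x$,
\[
t^{-1}\langle a^x\rangle t\cap\langle a\rangle=\langle a^{\,mx/\gcd(x,n)}\rangle,\qquad t\langle a^x\rangle t^{-1}\cap\langle a\rangle=\langle a^{\,nx/\gcd(x,m)}\rangle,
\]
so that the unique $t$-edge out of $x$ ends at $mx/\gcd(x,n)$ and the unique $t^{-1}$-edge out of $x$ ends at $nx/\gcd(x,m)$. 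For the first formula, $a^{xk}$ is carried into $\langle a\rangle$ precisely when $n\mid xk$, i.e.\ when $xk$ ranges over $x\Z\cap n\Z=\lcm(x,n)\Z$, and then $t^{-1}a^{xk}t=a^{mxk/n}$; using $\lcm(x,n)/n=x/\gcd(x,n)$ yields the stated generator. The second formula is the same computation with $m$ and $n$ interchanged, which is also forced by the isomorphism $\mathrm{BS}(m,n)\cong\mathrm{BS}(n,m)$, $t\mapsto t^{-1}$, $a\mapsto a$. This step uses Britton's lemma exactly as in Lemma~\ref{lem:intersection} and is the only genuine input.

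Next I would change coordinates to make the gcds transparent. Put $g=\gcd(m,n)$ and $\alpha=l/n=m/g$, $\beta=l/m=n/g$, so that $\gcd(\alpha,\beta)=1$ and $m=g\alpha$, $n=g\beta$, $l=g\alpha\beta$. Every node then takes the form $g\alpha^{s}\beta^{r}$ with $(s,r)\in\N^2$, together with the isolated node $1$: indeed $m(l/n)^i=g\alpha^{i+1}$, $n(l/m)^i=g\beta^{i+1}$, $l(l/n)^i=g\alpha^{i+1}\beta$, $l(l/m)^i=g\alpha\beta^{i+1}$, and the mixed label $l(l/n)^i(l/m)^j=g\alpha^{i+1}\beta^{j+1}$. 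Because $\gcd(g\alpha^s\beta^r,g\beta)$ equals $g\beta$ when $r\ge1$ and $g$ when $r=0$ (and symmetrically for the gcd with $g\alpha$), the master formulas collapse to the clean rules: the $t$-edge sends $g\alpha^{s}\beta^{r}\mapsto g\alpha^{s+1}\beta^{\max(r-1,0)}$, the $t^{-1}$-edge sends $g\alpha^{s}\beta^{r}\mapsto g\alpha^{\max(s-1,0)}\beta^{r+1}$, while $1\mapsto m$ under $t$ and $1\mapsto n$ under $t^{-1}$.

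What remains is to translate these two rules back into the $m,n,l$ coordinates, match them row by row against the displayed list, and argue completeness. Completeness is immediate: the defining relation attaches to each node exactly one $t$-edge and one $t^{-1}$-edge (each intersection above is a nontrivial cyclic group, hence $\langle a^y\rangle$ for a unique $y>0$), the rules show every target is again a node of the stated form, and since neither $m$ nor $n$ divides the other we have $|m|,|n|\ge2$, so no edge ever returns to the node $1$; thus the listed edges exhaust $E$. The step I expect to consume the most effort, and the likely source of slips, is the boundary behaviour recorded by the $\max$ in the rules: when $r=0$ the factor $\beta$ drops out of the relevant lcm, and the would-be target $g\alpha^{s+1}\beta^{-1}$ must instead be read in the $m$-branch (formally $l\beta^{-1}=m$) rather than as a spurious negative power of $\beta$, and symmetrically when $s=0$; this is exactly why the edges out of $m(l/n)^i$ and $l(l/n)^i$ land in the $m$-branch. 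One must also track carefully which column a mixed-node edge belongs to, since the $t$-edge \emph{raises} the power of $\alpha=l/n$ and \emph{lowers} the power of $\beta=l/m$, moving the node toward the $m$-branch, while the $t^{-1}$-edge does the reverse.
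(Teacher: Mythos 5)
Your proposal is mathematically correct and takes a genuinely more uniform route than the paper. The paper verifies each family of edges separately by explicit coset manipulation: it computes the edges out of $1$, then out of $m$ and $m\left(\frac{l}{n}\right)^i$ by writing $|m|c=|m|\left(q\frac{l}{|m|}+r\right)$ and splitting off a pinch, and then disposes of the remaining ``horizontal'' edges by quoting four conjugation identities such as $t\langle a^{mc}\rangle t^{-1}=\langle a^{nc}\rangle$; its completeness argument is the same as yours (each node has exactly one out-edge of each label). Your two master formulas
$t^{-1}\langle a^x\rangle t\cap\langle a\rangle=\langle a^{mx/\gcd(x,n)}\rangle$ and
$t\langle a^x\rangle t^{-1}\cap\langle a\rangle=\langle a^{nx/\gcd(x,m)}\rangle$
subsume all of these case computations at once, and the substitution $m=g\alpha$, $n=g\beta$, $l=g\alpha\beta$ with $\gcd(\alpha,\beta)=1$ reduces the whole lemma to the transition rules $(s,r)\mapsto(s+1,\max(r-1,0))$ and $(s,r)\mapsto(\max(s-1,0),r+1)$ on exponent pairs. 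What your version buys is uniformity and a transparent picture of $\Omega$ as a quarter-plane grid; what the paper's version buys is that each displayed edge family is checked directly, with no coordinate translation needed to compare with its figure.

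One thing you must make explicit when you carry out the ``row by row'' matching: your rules do \emph{not} agree with the displayed list in its last row, and it is the printed statement that is wrong there, not your computation. Your $t$-rule sends the mixed node $l\left(\frac{l}{n}\right)^{i}\left(\frac{l}{m}\right)^{j}=g\alpha^{i+1}\beta^{j+1}$ to $g\alpha^{i+2}\beta^{j}=l\left(\frac{l}{n}\right)^{i+1}\left(\frac{l}{m}\right)^{j-1}$, whereas the printed $t$-column entry is $l\left(\frac{l}{n}\right)^{i-1}\left(\frac{l}{m}\right)^{j+1}$; the two entries of that row are transposed. A concrete check: in $\mathrm{BS}(2,3)$, with $x=l\frac{l}{n}\frac{l}{m}=36$, one has $t^{-1}\langle a^{36}\rangle t\cap\langle a\rangle=\langle a^{24}\rangle$, and $24=l\left(\frac{l}{n}\right)^2$, not $l\left(\frac{l}{m}\right)^2=54$. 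Your version is the one consistent with the paper's Figure~3 (where the $t$-labeled edge from $l\frac{l}{n}\frac{l}{m}$ goes to $l\left(\frac{l}{n}\right)^2$) and with the identity $t^{-1}\langle a^{lc}\rangle t=\langle a^{m\frac{l}{n}c}\rangle$ quoted in the paper's own proof. (A smaller mismatch: restricting to positive $i$ means the printed list also omits the two edges out of the node $l$ itself, the $i=0$ case of the sixth row, which your rules produce.) So the plan is sound, but its final step should be stated as proving the corrected list rather than the printed one.
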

 \begin{proof} Since $1\in N$ and  \[\begin{array}{lll}
 t^{-1}\langle a\rangle t\cap \langle a\rangle & = & \langle a^m\rangle\\
 t\langle a\rangle t^{-1}\cap \langle a\rangle & = & \langle a^n\rangle\end{array}\]
 we have $m,n\in N$ and $(1,m),(1,n)\in E$ with $(1,m)$ labeled $t$ and $(1,n)$ labeled $t^{-1}$.

 Since 
 \[\begin{array}{llll}t^{-1}\langle a^m\rangle t %& = & t^{-1}\langle a^{|m|}\rangle t\\
 & = & \{ t^{-1}a^{|m|c} t \ : \ c\in \mathbb Z\}\\
 & = &  \{ t^{-1}a^{|m|(q\frac{l}{|m|}+r)} t \ : \ q,r\in \mathbb Z, 0\leq r<\frac{l}{|m|}\}\\
 & = &  \{ t^{-1}a^{ql}tt^{-1}a^{|m|r} t \ : \ q,r\in \mathbb Z, 0\leq r<\frac{l}{|m|}\}\\
 & = &  \{ a^{q\frac{l}{n}m}t^{-1}a^{|m|r} t \ : \ q,r\in \mathbb Z, 0\leq r<\frac{l}{|m|}\}\\
  \end{array}\]
 has intersection $\langle a^{\frac{l}{n}m}\rangle $ with $\langle a\rangle$, we have the  edge  $(m,  m\left(\frac{l}{n}\right))$ labeled $t$.
 A similar argument gives the edge $(n, n\left(\frac{l}{m}\right))$ labeled $t^{-1}$.
 
More generally for any $i\in\N$ we have
 \[\begin{array}{llll}t^{-1}\langle a^{m\left(\frac{l}{n}\right)^i}\rangle t %& = & t^{-1}\langle a^{|m\left(\frac{l}{n}\right)^i|}\rangle t\\
 & = & \{ t^{-1}a^{|m|\left(\frac{l}{|n|}\right)^ic} t \ : \ c\in \mathbb Z\}\\
 & = &  \{ t^{-1}a^{|m|\left(\frac{l}{|n|}\right)^i(q\frac{l}{|m|}+r)} t \ : \ q,r\in \mathbb Z, 0\leq r<\frac{l}{|m|}\}\\
 & = &  \{ t^{-1}a^{ql\left(\frac{l}{|n|}\right)^i}tt^{-1}a^{|m|\left(\frac{l}{|n|}\right)^ir} t \ : \ q,r\in \mathbb Z, 0\leq r<\frac{l}{|m|}\}\\
 & = &  \{ a^{q\frac{l}{n}\left(\frac{l}{|n|}\right)^im}t^{-1}a^{|m|\left(\frac{l}{|n|}\right)^ir} t \ : \ q,r\in \mathbb Z, 0\leq r<\frac{l}{|m|}\}\\
  \end{array}\]
which  has intersection $\langle a^{\frac{l}{n}\left(\frac{l}{|n|}\right)^im}\rangle=\langle a^{\left(\frac{l}{n}\right)^{i+1}m}\rangle $ with $\langle a\rangle$, so we have  edge  $$\left(m\left(\frac{l}{n}\right)^i,  m\left(\frac{l}{n}\right)^{i+1}\right)$$ labeled $t$
for all $i\in\N$. A similar argument gives the edges $$\left(n\left(\frac{l}{m}\right)^i, n\left(\frac{l}{m}\right)^{i+1}\right)$$ labeled $t^{-1}$.

In  Figure \ref{diagram} we have drawn part of the graph $\Omega$, with these edges draw vertically down the left and right sides of the picture.

The remaining edges (which are drawn horizontally in the picture) all follow from  these facts where $c\in \N$:
\[\begin{array}{lll} t\langle a^{mc}\rangle t^{-1}&=& \langle a^{nc}\rangle,\\
t^{-1}\langle a^{lc}\rangle t&=& \langle a^{n\frac{l}{m}c}\rangle,\\
 t^{-1}\langle a^{nc}\rangle t & = &  \langle a^{mc}\rangle,\\
t^{-1}\langle a^{lc}\rangle t& = &  \langle a^{m\frac{l}{n}c}\rangle.
\end{array}\]
Since we have described an edge labeled $t$ and $t^{-1}$ starting from each node in the set $N$, there are no edges other than these.
\end{proof}

\begin{center}
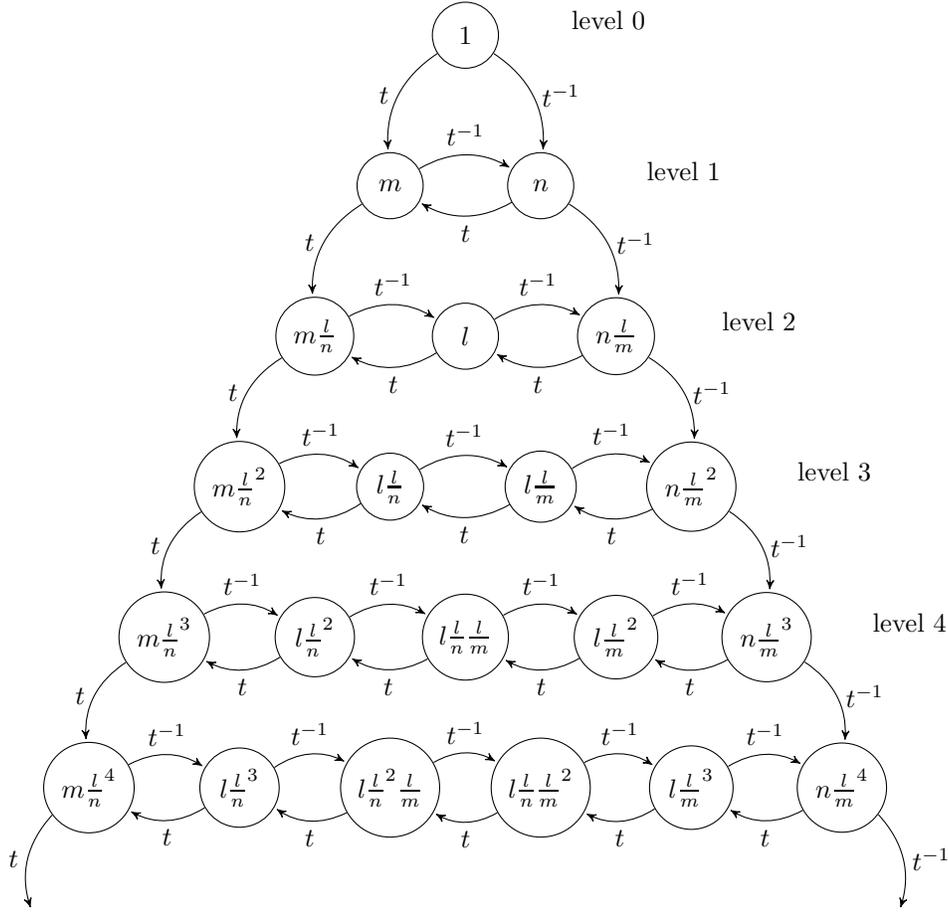
\begin{figure}[htb]
\begin{tikzpicture}[scale=.77, ->,>=stealth',shorten >=1pt,auto,node distance=2.5cm,
scale=1.3]
\tikzstyle{every state}=[fill=white,draw=black,text=black]

   \node (f) at (7.9,12.2) {level 0};
   \node (a) at (8.9,10.2) {level 1};
   \node (b) at (9.9,8.2) {level 2};
      \node (c) at (10.9,6.2)  {level 3};
      \node (d) at (11.9,4.2)  {level 4};
     %    \node (e) at (13,2)  {level 5};

   \node[state]  (v) at (6,12) {$1$};

   \node[state]  (n) at (7,10) {$n$};
   \node  [state] (nl) at (8,8) {$n\frac{l}{m}$};
   \node  [state] (nll) at (9,6) {$n\frac{l}{m}^2$};
      \node  [state] (nlll) at (10,4) {$n\frac{l}{m}^3$};
         \node  [state] (nllll) at (11,2) {$n\frac{l}{m}^4$};
      \node  (nlllll) at (11.75,0.25) {};

   \node  [state] (m) at (5,10) {$m$};
      \node  [state] (ml) at (4,8) {$m\frac{l}{n}$};
   \node  [state] (mll) at (3,6) {$m\frac{l}{n}^2$};
         \node  [state] (mlll) at (2,4) {$m\frac{l}{n}^3$};
         \node  [state] (mllll) at (1,2) {$m\frac{l}{n}^4$};
         \node  (mlllll) at (0.25,0.25) {};

   \node  [state] (l) at (6,8) {$l$};
   \node  [state] (lm) at (7,6) {$l\frac{l}{m}$};
   \node  [state] (lmm) at (8,4) {$l\frac{l}{m}^2$};
      \node  [state] (lmmm) at (9,2) {$l\frac{l}{m}^3$};

   \node  [state] (ln) at (5,6) {$l\frac{l}{n}$};
   \node  [state] (lnm) at (6,4) {$l\frac{l}{n}\frac{l}{m}$};
   \node  [state] (lnmm) at (7,2) {$l\frac{l}{n}\frac{l}{m}^2$};

   \node  [state] (lnn) at (4,4) {$l\frac{l}{n}^2$};
   \node  [state] (lnnm) at (5,2) {$l\frac{l}{n}^2\frac{l}{m}$};

   \node  [state] (lnnn) at (3,2) {$l\frac{l}{n}^3$};

\path (v) edge [bend right, left] node {$t$} (m);
\path (m) edge [bend right, left] node {$t$} (ml);
\path (ml) edge [bend right, left] node {$t$} (mll);
\path (mll) edge [bend right, left] node {$t$} (mlll);
\path (mlll) edge [bend right, left] node {$t$} (mllll);
\path (mllll) edge [bend right, left] node {$t$} (mlllll);

\path (v) edge [bend left, right] node {$t^{-1}$} (n);
\path (n) edge [bend left, right] node {$t^{-1}$} (nl);
\path (nl) edge [bend left, right] node {$t^{-1}$} (nll);
\path (nll) edge [bend left, right] node {$t^{-1}$} (nlll);
\path (nlll) edge [bend left, right] node {$t^{-1}$} (nllll);
\path (nllll) edge [bend left, right] node {$t^{-1}$} (nlllll);

   \path (m) edge [bend left, above] node {$t^{-1}$} (n);
   \path (n) edge [bend left, below] node {$t$} (m);   
   
   \path (ml) edge [bend left, above] node {$t^{-1}$} (l);
   \path (l) edge [bend left, below] node {$t$} (ml);   
   
   \path (l) edge [bend left, above] node {$t^{-1}$} (nl);
   \path (nl) edge [bend left, below] node {$t$} (l);   

%level 3
   \path (mll) edge [bend left, above] node {$t^{-1}$} (ln);
   \path (ln) edge [bend left, below] node {$t$} (mll);   
   
   \path (ln) edge [bend left, above] node {$t^{-1}$} (lm);
   \path (lm) edge [bend left, below] node {$t$} (ln);   
   \path (lm) edge [bend left, above] node {$t^{-1}$} (nll);
   \path (nll) edge [bend left, below] node {$t$} (lm);   

%level 4
   \path (mlll) edge [bend left, above] node {$t^{-1}$} (lnn);
   \path (lnn) edge [bend left, below] node {$t$} (mlll);   
   
   \path (lnn) edge [bend left, above] node {$t^{-1}$} (lnm);
   \path (lnm) edge [bend left, below] node {$t$} (lnn);   
   \path (lnm) edge [bend left, above] node {$t^{-1}$} (lmm);
   \path (lmm) edge [bend left, below] node {$t$} (lnm);   
   \path (lmm) edge [bend left, above] node {$t^{-1}$} (nlll);
   \path (nlll) edge [bend left, below] node {$t$} (lmm);
         
%level 5
   \path (mllll) edge [bend left, above] node {$t^{-1}$} (lnnn);
   \path (lnnn) edge [bend left, below] node {$t$} (mllll);   
   
   \path (lnnn) edge [bend left, above] node {$t^{-1}$} (lnnm);
   \path (lnnm) edge [bend left, below] node {$t$} (lnnn);   
   \path (lnnm) edge [bend left, above] node {$t^{-1}$} (lnmm);
   \path (lnmm) edge [bend left, below] node {$t$} (lnnm);         
   \path (lnmm) edge [bend left, above] node {$t^{-1}$} (lmmm);
   \path (lmmm) edge [bend left, below] node {$t$} (lnmm);              
         \path (lmmm) edge [bend left, above] node {$t^{-1}$} (nllll);
   \path (nllll) edge [bend left, below] node {$t$} (lmmm);
   \end{tikzpicture}
 \caption{Part of the graph $\Omega$. An edge from $x$ to $y$ labeled $t^{\epsilon}$ means that $t^{-\epsilon}\langle a^x\rangle t^{\epsilon} \cap \langle a\rangle=\langle a^y\rangle$.}
   \label{diagram}
\end{figure}
\end{center}

 \begin{cor}\label{cor:lengthDelta}
 The graph $\Omega$ is connected. The length of the shortest directed path from $m\left(\frac{l}{n}\right)^i$
to $n\left(\frac{l}{m}\right)^i$ is $i+1$ for all $i\in \N$, and also $i+1$ from $n\left(\frac{l}{m}\right)^i$
to $m\left(\frac{l}{n}\right)^i$.
\end{cor}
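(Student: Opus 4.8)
The plan is to read off the structure of $\Omega$ from the explicit edge list of the preceding lemma and to organise its vertices into \emph{levels}, exactly as displayed in Figure~\ref{diagram}. I assign level $0$ to the vertex $1$, level $i+1$ to each of the two boundary vertices $m(l/n)^i$ and $n(l/m)^i$, and level $i+j+2$ to the interior vertex $l(l/n)^i(l/m)^j$; within a level I number the vertices by position, from the left-hand vertex $m(l/n)^i$ (position $0$) across to the right-hand vertex $n(l/m)^i$ (position $i+1$). Since neither $m$ nor $n$ divides the other, the integers labelling these vertices are distinct, so the level and position of each vertex are well defined. The key observation, obtained by inspecting the edge families one at a time, is the following monotonicity: every directed edge either preserves the level and shifts the position by exactly $\pm 1$, or else increases the level by $1$; the level-raising edges occur only along the two chains $1 \to m \to m(l/n)\to\cdots$ and $1\to n\to n(l/m)\to\cdots$ descending from the apex. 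In particular, \emph{no edge decreases the level}.

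Granting this, connectivity is immediate: for any vertex $v$ at level $k$ and position $p$ I can descend the left boundary from $1$ to the level-$k$ left-hand vertex in $k$ steps, and then move one position to the right at a time to reach $v$ in a further $p$ steps. Thus every vertex is joined to $1$ by a directed path, so the underlying graph is connected.

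For the distance statement, fix $i\in\N$ and consider any directed path from $m(l/n)^i$ to $n(l/m)^i$; both endpoints sit at level $i+1$. Because the level is nondecreasing along any directed path and the two endpoints share a level, the entire path must remain at level $i+1$, so every edge it uses is level-preserving and alters the position by $\pm 1$. Passing from position $0$ to position $i+1$ then forces at least $i+1$ edges, and the path that steps right at every stage (reading the label $t^{-1}$ each time) attains this, giving shortest length exactly $i+1$. The reverse direction from $n(l/m)^i$ to $m(l/n)^i$ is handled identically, now stepping left (reading $t$) each time.

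I expect the only real obstacle to be the lower bound, which relies crucially on the fact that the level cannot decrease: this is precisely what prevents a path from descending into a deeper level and climbing back, and thereby confines any path between two vertices of the same level to that level, reducing the estimate to counting unit position-moves. Establishing the monotonicity is a routine but essential case check over the edge families listed in the lemma; once it is secured, both halves of the corollary drop out.
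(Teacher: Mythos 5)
Your proposal is correct and follows essentially the same route as the paper: the paper likewise organises $\Omega$ into the levels shown in Figure~\ref{diagram}, reaches every node from $1$ by the paths labelled $t^kt^{-p}$ (your ``descend the left boundary, then step right'' path is exactly this), and exhibits the boundary-to-boundary path labelled by powers of $t^{-1}$ to realise the distance $i+1$. The one point where you go beyond the paper is the explicit level-monotonicity argument (no edge decreases level, so a directed path between two vertices of the same level must stay in that level), which supplies the lower bound that the paper asserts without spelling out; this is a genuine, if small, improvement in rigour rather than a different method.
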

\begin{proof}
Each node in $N$ can be reached by a path starting at $1$ and labeled $t^it^{-j}$ for $i,j\in \N$, $i\geq j$, as follows. \begin{itemize}\item
The path $t^i$ starting at $1$ ends at $m\left(\frac{l}{n}\right)^{i-1}$, for $i\geq 1$.
\item
The path $t^it^{-j}$ starting at $1$ ends at $l\left(\frac{l}{m}\right)^{j-1}\left(\frac{l}{n}\right)^{i-j-1}$, for $i\geq 2, j\geq 1$, and $j<i$.
\item
The path $t^it^{-i}$ starting at $1$ ends at $n\left(\frac{l}{m}\right)^{i-1}$, for $i\geq 1$.
\end{itemize}

The path $t^{-i}$ starting at $m\left(\frac{l}{n}\right)^{i-1}$ ends at
 $n\left(\frac{l}{m}\right)^{i-1}$ and similarly for the reverse path $t^i$ starting at
 $n\left(\frac{l}{m}\right)^{i-1}$, so the distance from one side of $\Omega$ to the other at level $i$ is $i$.
\end{proof}

\begin{lem}\label{lem:travel}
Let $w\in\mathrm{BS}(m,n)$ be freely reduced and contain no pinches. Let $p(w)$ be the word in the free monoid generated by $t$ and $t^{-1}$
obtained from $w$ by removing 
all $a^{\pm 1}$ letters. Consider the path starting at the node $1$ in $\Omega$ and following the sequence of edges labeled by $p(w)$.
Let $r$ be the label of the node at the end of this path.
Then  $w^{-1}\langle a\rangle w\cap \langle a\rangle=\langle a^r\rangle$.
\end{lem}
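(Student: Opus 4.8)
The plan is to mirror the proof of Lemma~\ref{lem:LambdaGraph}, replacing the subgroup $\langle a^m\rangle$ by $\langle a\rangle$ and the graph $\Lambda$ by $\Omega$, and to argue by induction on the number of $t^{\pm 1}$ letters in $w$. To make the induction go through I would prove the slightly more general statement: for every node $x\in N$ and every freely reduced word $w$ with no pinches, if the path in $\Omega$ starting at $x$ and following the edge labels $p(w)$ ends at the node $r$, then $w^{-1}\langle a^x\rangle w\cap\langle a\rangle=\langle a^r\rangle$. The lemma is then the special case $x=1$, since $1\in N$ and $\langle a^1\rangle=\langle a\rangle$.

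For the base case, if $w$ has no $t^{\pm 1}$ letters then $w=a^{\eta}$ commutes with every power of $a$, so $w^{-1}\langle a^x\rangle w=\langle a^x\rangle$; as $\langle a^x\rangle\subseteq\langle a\rangle$ the intersection equals $\langle a^x\rangle$, matching the empty path, which ends where it starts, at $r=x$. For the inductive step I would write $w=a^{\eta}t^{\epsilon}v$ with $\epsilon=\pm1$, where the suffix $v$ carries one fewer $t^{\pm 1}$ letter and is again freely reduced with no pinches (both properties pass to suffixes). Applying Lemma~\ref{lem:intersection} with $i=x$ peels off the leading $t^{\epsilon}$:
$$
w^{-1}\langle a^x\rangle w\cap\langle a\rangle
= v^{-1}\bigl(t^{-\epsilon}\langle a^x\rangle t^{\epsilon}\cap\langle a\rangle\bigr)v\cap\langle a\rangle .
$$
By the lemma describing the edge set of $\Omega$, $t^{-\epsilon}\langle a^x\rangle t^{\epsilon}\cap\langle a\rangle=\langle a^y\rangle$, where $(x,y)$ is precisely the edge of $\Omega$ labeled $t^{\epsilon}$ leaving $x$; such an edge exists and is unique, since that lemma exhibits exactly one $t$-edge and one $t^{-1}$-edge out of each node of $N$. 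Hence $w^{-1}\langle a^x\rangle w\cap\langle a\rangle=v^{-1}\langle a^y\rangle v\cap\langle a\rangle$, and the inductive hypothesis applied to $v$ starting at $y$ identifies this with $\langle a^r\rangle$, where $r$ is the end of the path from $y$ labeled $p(v)$. Since the path from $x$ labeled $p(w)=t^{\epsilon}p(v)$ traverses the edge $(x,y)$ and then the path from $y$ labeled $p(v)$, its endpoint is also $r$, completing the induction.

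The one point that needs care, rather than a genuine obstacle, is checking that the argument never leaves the node set $N$ and that Lemma~\ref{lem:intersection} applies at every stage. The former holds because the edge lemma guarantees a unique $t^{\epsilon}$-successor inside $N$ for each node, so the path is always defined and the intermediate subgroups are always of the form $\langle a^y\rangle$ with $y\in N$; the latter holds because every suffix of a freely reduced pinch-free word inherits both properties. With these observations the structural content is carried entirely by Lemma~\ref{lem:intersection} and the computation of the edges of $\Omega$, and the remainder is the routine induction above.
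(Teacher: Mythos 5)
Your proof is correct, and it rests on the same ingredients the paper uses: Lemma~\ref{lem:intersection}, the lemma computing the edge set of $\Omega$, and induction on the number of $t^{\pm 1}$ letters. The organization is slightly different, though. You strengthen the statement to allow an arbitrary starting node $x\in N$ and peel off the \emph{leading} letter via the decomposition $w=a^{\eta}t^{\epsilon}v$ --- exactly the shape of the paper's proof of Lemma~\ref{lem:LambdaGraph} --- so that Lemma~\ref{lem:intersection} applies verbatim at every step, and the suffix $v$ inherits the hypotheses. The paper instead keeps the starting node fixed at $1$ and peels off the \emph{trailing} letter, writing $w=ut^{\epsilon}a^{\eta}$, applying the inductive hypothesis to the prefix $u$, and then appending one edge at the end of the path; this forces the step of replacing $u^{-1}\langle a\rangle u$ by $u^{-1}\langle a\rangle u\cap\langle a\rangle=\langle a^r\rangle$ inside the larger intersection, which is a mirrored form of Lemma~\ref{lem:intersection} (justified by the same Britton's-lemma argument) rather than that lemma as literally stated. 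Your version buys a fully verbatim application of the quoted lemma at the cost of carrying a more general inductive statement; the paper's buys a statement that never leaves base point $1$ at the cost of an implicit variant of Lemma~\ref{lem:intersection}. Both are sound, and your formulation is arguably the cleaner way to run the induction.
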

\begin{proof}
The result follows from Lemma \ref{lem:intersection}  and  induction on the number of $t^{\pm 1}$ letters in $w$. If $w=a^{\eta_0}ta^{\eta_1}, \eta_0,\eta_1\in\mathbb Z$ then $w^{-1}\langle a\rangle w\cap \langle a\rangle=
a^{-\eta_1}t^{-1}\langle a\rangle ta^{\eta_1}\cap \langle a\rangle=\langle a^m\rangle$ and the path from $1$ labeled $t$ ends at $m$, with a similar result for $w=a^{\eta_0}t^{-1}a^{\eta_1}.$ 

If $w=uta^{\eta}, \eta\in \mathbb Z$, say the path $p(u)$ ends at the node $r$. Then $w^{-1}\langle a\rangle w\cap \langle a\rangle=a^{-\eta}t^{-1}u^{-1}\langle a\rangle uta^{\eta}\cap \langle a\rangle=
a^{-\eta}t^{-1}\langle a^r\rangle ta^{\eta}\cap \langle a\rangle=\langle a^s\rangle$
where $(r,s)$ is an edge labeled $t$ in $\Omega$. A similar argument applies for $w=ut^{-1}a^{\eta}$.
\end{proof}

\begin{lem}\label{lem:newlem} 
Let $w\in\{a^{\pm 1},t^{\pm 1}\}^*$ with $t$-exponent sum $\rho(w)$, and let $t_{\max}(w)$ be the maximum $t$-exponent sum of any prefix of $w$. Write $w=w_1w_2$ where  $w_1$ has $t$-exponent sum $t_{\max}(w)$, and $w_2$ has $t$-exponent sum $\mu(w)=\rho(w)-t_{\max}(w)\leq 0$. In other words, $u_1$ is the longest prefix of $u$ that has maximum $t$-exponent sum, and the $t$-exponent sum of $u_2$ is nonpositive.  Let $r$ be the number of $t^{-1}$ letters in $w$.
Then if $R>r$, the path $p(t^{R}w)$ starting at $1$ ends at distance $|\mu(w)|$ from the left side of $\Omega$, and at level $R+t_{\max}(w)$.
\end{lem}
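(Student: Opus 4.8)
The plan is to introduce coordinates on $\Omega$ that convert the act of reading a word in $t^{\pm1}$ into the evolution of two floored counters, and then read the two coordinates of the endpoint directly off the $t$-exponent sums of the prefixes of $t^{R}p(w)$. First I would record, from the explicit node labels, that each node lies at a well-defined \emph{level} $L$ and \emph{distance} $h$ from the left side, with $0\le h\le L$ and $L+1$ nodes at level $L$: the node at level $L$, distance $h$ is $m(l/n)^{L-1}$ for $h=0$, is $n(l/m)^{L-1}$ for $h=L$, and is $l(l/n)^{L-1-h}(l/m)^{h-1}$ for $1\le h\le L-1$ (this matches the row structure underlying Corollary~\ref{cor:lengthDelta}). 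Matching these labels against the description of the edge set of $\Omega$, I would verify that, writing $d=L-h$ for the distance from the right side, following a $t$-edge sends $(h,d)\mapsto(\max(h-1,0),\,d+1)$ and following a $t^{-1}$-edge sends $(h,d)\mapsto(h+1,\,\max(d-1,0))$. In words: $t$ moves one step left, or one step down the left boundary if already at $h=0$; $t^{-1}$ moves one step right, or one step down the right boundary if already at $h=L$. Translating the listed edges into these two uniform rules is where all the case analysis sits, and I expect it to be the main obstacle; once it is done the remainder is bookkeeping.

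Next I would prove, by induction on word length, the two extremal identities for a path that starts at $1=(0,0)$ and reads a word $v\in\{t,t^{-1}\}^{*}$. Writing $\rho(u)$ for the $t$-exponent sum of a prefix $u$ of $v$, the endpoint satisfies
$$
h \;=\; \max_{u\preceq v}\rho(u)\;-\;\rho(v),
\qquad
d \;=\; \rho(v)\;-\;\min_{u\preceq v}\rho(u),
$$
where $u\preceq v$ ranges over all prefixes. Both follow immediately from the floored-counter rules: reading a $t$ either leaves the running maximum fixed and decreases $h$, or (when $h=0$) raises the maximum in step with $\rho$, which is exactly $h\mapsto\max(h-1,0)$, and dually a $t^{-1}$ reproduces $d\mapsto\max(d-1,0)$; these are the standard reflected-walk computations, and at the empty word both sides are $0$.

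Finally I would apply this with $v=t^{R}\,p(w)$. The prefix exponent sums of $v$ run through $0,1,\dots,R$ during $t^{R}$ and then through $R+\rho(u)$ as $u$ ranges over prefixes of $p(w)$, so $\max_{u\preceq v}\rho(u)=R+t_{\max}(w)$ and $\rho(v)=R+\rho(w)$; hence $h=t_{\max}(w)-\rho(w)=|\mu(w)|$, which is the claimed distance from the left side. For the level I use $L=h+d$ together with the hypothesis $R>r$: since $p(w)$ contains only $r$ letters $t^{-1}$, every prefix $u$ of $p(w)$ has $\rho(u)\ge -r$, so every prefix of $v$ beyond $t^{R}$ has exponent sum at least $R-r>0$, and the prefixes inside $t^{R}$ have exponent sum at least $0$. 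Thus $\min_{u\preceq v}\rho(u)=0$, attained at the empty prefix, giving $d=\rho(v)-0=R+\rho(w)$ and
$$
L=h+d=|\mu(w)|+R+\rho(w)=\bigl(t_{\max}(w)-\rho(w)\bigr)+R+\rho(w)=R+t_{\max}(w),
$$
as required. The role of the hypothesis $R>r$ is precisely to keep the right-hand counter $d$ away from its floor throughout the reading of $p(w)$, which is what pins the minimum to the starting prefix.
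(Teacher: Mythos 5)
Your proof is correct, and it is organized genuinely differently from the paper's. The paper proves the lemma by a direct induction on the length of $w$, tracking the pair (level, distance from the left side) of the endpoint of $p(t^Rw)$: appending $a^{\pm1}$ does nothing, appending $t^{-1}$ moves one step right at the same level (here $R>r$ is invoked inside the induction, via Corollary~\ref{cor:lengthDelta}, to guarantee the path is never on the right boundary), and appending $t$ either descends the left boundary (when $t_{\max}$ increases) or moves one step left (when it does not). You instead isolate a hypothesis-free intermediate statement: the two coordinates $h$ (distance from left) and $d$ (distance from right) evolve as independent floored counters, so for an arbitrary word read from the apex they are given by the reflected-walk (Lindley) formulas $h=\max_{u\preceq v}\rho(u)-\rho(v)$ and $d=\rho(v)-\min_{u\preceq v}\rho(u)$; the hypothesis $R>r$ is then used exactly once, to pin $\min_{u\preceq v}\rho(u)=0$. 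Your intermediate formula is more general than the lemma (it handles paths that bounce off both walls, i.e.\ it describes $p(t^Rw)$ even when $R\le r$), it makes the role of $R>r$ completely transparent, and it is reusable; the price is the coordinate and label bookkeeping that the paper's tailored induction avoids.

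One concrete warning about the verification step you defer as "the main obstacle". Your movement rules are the correct ones, and they do follow from direct computation (the $t$-edge from node $x$ leads to $m\lcm(x,n)/n$, and dually for $t^{-1}$) and from Figure~\ref{diagram}. However, if you check them against the printed edge list for $\Omega$, you will hit an apparent contradiction in its final row: the two interior-to-interior edges there have their $t$ and $t^{-1}$ labels transposed. The $t$-edge from $l\left(\frac{l}{n}\right)^{i}\left(\frac{l}{m}\right)^{j}$ goes to $l\left(\frac{l}{n}\right)^{i+1}\left(\frac{l}{m}\right)^{j-1}$, one step toward the left side, consistent with the figure and with conjugation but not with the list as printed. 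This is a misprint in the paper, not an error in your rules; the paper's own argument silently uses the rules exactly as you state them.
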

\begin{proof}
If $w$ has length 0 then the statement is true. Suppose for induction the statement is true for all words of length $k$ and let $w$ have length $k+1$.

If $w=ua^{\pm 1}$ then $t_{\max}(w)=t_{\max}(u), \mu(w)=\mu(u)$ and  $p(t^{R}w)=p(t^{R}u) $ so the statement is true. 

If  $w=ut^{-1}$ then $t_{\max}(w)=t_{\max}(u), \mu(w)=\mu(u)-1$. The path $p(t^Ru)$ ends at level $R+t_{\max}(w)$ distance $|\mu(u)|$ from the left side of $\Omega$ by inductive assumption.
By   Corollary \ref{cor:lengthDelta} the length of level $R+t_{\max}(u)$ is $R+t_{\max}(u)$, so since
 $r<R$, the path does not reach the right side of $\Omega$, so appending $t^{-1}$ to the path does not change level, and the distance from the left side is increased by 1, so the statement is true for $w$ since $|\mu(w)|=|\mu(u)-1|=|\mu(u)|+1$.

If $w=ut$, then we have two cases. 
If $t_{\max}(w)=t_{\max}(u)+1$, then $\mu(w)=0=\mu(u)$, so the statement is true since $p(t^Ru)$ ends on the left side of $\Omega$ and appending a $t$ edge moves it down one level along a vertical edge.

Otherwise $t_{\max}(w)=t_{\max}(u)$. In this case $\mu(u)<0$, so by inductive assumption $p(t^Ru)$ ends at level $R+t_{\max}(u)$ and positive distance $|\mu(u)|$ from the 
left side of $\Omega$, so appending a $t$ edge does not change the level, and decreases the distance from the left by 1, proving the claim.
\end{proof}

\begin{prop}\label{prop:scales}
If $x\in \mathrm{BS}(m,n)$ has $t$-exponent sum $\rho$, then $s(x)= \left(\frac{\lcm(m,n)}{|n|}\right)^{\rho}$ if $\rho\geq 0$, and $s(x)=  \left(\frac{\lcm(m,n)}{|m|}\right)^{
|\rho|}$ if $\rho<0$.
\end{prop}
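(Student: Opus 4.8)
The plan is to evaluate the scale through M\"oller's asymptotic formula (Theorem~\ref{thm:Moller}), taking $V$ to be the closure of $\langle a\rangle$. Since $\mathrm{BS}(m,n)$ is dense in $G_{m,n}$, Corollary~\ref{cor:dense} reduces the computation to elements $x\in\mathrm{BS}(m,n)$, and Corollary~\ref{cor:indexG} lets the indices appearing in M\"oller's formula be computed inside $\mathrm{BS}(m,n)$ rather than in $G_{m,n}$. Using conjugation invariance (Proposition~\ref{prop:scale_properties}$(iv)$) together with Corollary~\ref{cor:wk}, I replace $x$ by a conjugate word $w$ all of whose powers $w^k$ are freely reduced and pinch-free. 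Lemma~\ref{lem:travel} then gives $w^{-k}\langle a\rangle w^k\cap\langle a\rangle=\langle a^{r_k}\rangle$, where $r_k$ is the label of the node that the path $p(w)^k$ reaches starting from node $1$ of $\Omega$; as $[\langle a\rangle:\langle a^{r_k}\rangle]=|r_k|$, the proposition reduces to computing $\lim_{k\to\infty}|r_k|^{1/k}$.

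Next I record how node labels depend on position in $\Omega$. Writing $l=\lcm(m,n)$, one reads off from the edge list that the node at level $\ell$ and at distance $d$ from the left side of $\Omega$ (in the sense of Corollary~\ref{cor:lengthDelta}) has label of absolute value $L(\ell,d)=|m|\,(l/|n|)^{\ell-1}(|n|/|m|)^{d}$. Hence labels grow like $(l/|n|)^{\ell}$ along the left side (where $d$ stays bounded) and like $(l/|m|)^{\ell}$ along the right side (where $\ell-d$ stays bounded), and the whole task becomes to locate the endpoint $(\ell_k,d_k)$ of the path $p(w)^k$.

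Assume first $\rho\geq 0$. The step rules established in the proof of Lemma~\ref{lem:newlem} state that, as long as the path stays off the right side of $\Omega$, a letter $t^{-1}$ raises $d$ by $1$, a letter $t$ with $d>0$ lowers $d$ by $1$, and a letter $t$ with $d=0$ raises $\ell$ by $1$; the endpoint is then at level $t_{\max}(p(w)^k)$ and distance $t_{\max}(p(w)^k)-k\rho$ from the left. The one hazard, and the step I expect to be the main obstacle, is that $p(w)^k$ might touch the right side of $\Omega$, where the $t^{-1}$ rule changes. I defuse it by first cyclically conjugating $w$ so that every nonempty prefix of $p(w)$ has strictly positive $t$-exponent sum, which is possible by the cycle lemma precisely because $\rho>0$; cyclic conjugation preserves the scale and, since the new $(w')^{k}$ is a factor of $w^{k+1}$, preserves the conclusion of Corollary~\ref{cor:wk}. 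Then every nonempty prefix of $p(w')^k$ also has positive $t$-exponent sum, so the path leaves node $1$ along the left side and never returns to the right side, and the rules above apply verbatim. With $T:=t_{\max}(w')$ one gets $t_{\max}(p(w')^k)=(k-1)\rho+T$, hence $\ell_k=(k-1)\rho+T$ and $d_k=T-\rho$ (bounded), so $L(\ell_k,d_k)^{1/k}\to (l/|n|)^{\rho}$, which is the asserted value. When $\rho=0$ the running $t$-exponent sum stays bounded, so $(\ell_k,d_k)$ and therefore $|r_k|$ are bounded, giving $s(x)=1=(l/|n|)^{0}$.

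For $\rho<0$ I invoke the isomorphism $\mathrm{BS}(m,n)\cong\mathrm{BS}(n,m)$ sending $t\mapsto t^{-1}$ and $a\mapsto a$: it sends $x$ to an element of $t$-exponent sum $-\rho>0$ and interchanges the roles of $m$ and $n$, so the case already proved yields $s(x)=(l/|m|)^{|\rho|}$. Equivalently, one reruns the argument with the cycle lemma arranging all prefix sums to be negative, which drives the path down the right side of $\Omega$, where $L$ grows like $(l/|m|)^{\ell}$. Apart from the right-side control just discussed, the remaining work is routine bookkeeping with Lemmas~\ref{lem:travel} and~\ref{lem:newlem} and the formula for $L$.
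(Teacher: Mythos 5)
Your proposal follows the paper's broad strategy (M\"oller's formula with $V=\overline{\langle a\rangle}$, reduction to $\mathrm{BS}(m,n)$ via Corollaries~\ref{cor:dense} and~\ref{cor:indexG}, conjugation via Corollary~\ref{cor:wk}, and path-tracking in $\Omega$ via Lemma~\ref{lem:travel}), but the mechanism you use to control the path is genuinely different. The paper keeps the path away from the right side of $\Omega$ by pre-conjugating with $t^{2N}$ (where $N$ is the number of $t^{-1}$ letters), i.e.\ by starting the path $2N$ levels down the left side; this is the content of Lemma~\ref{lem:newlem} and of the bookkeeping in the paper's proof. You instead normalise $w$ itself by the cycle lemma so that every nonempty prefix has strictly positive $t$-exponent sum, which forces the path to hug the left side from the outset. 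Your supporting observations are correct: the label formula $L(\ell,d)=|m|\,(l/|n|)^{\ell-1}(|n|/|m|)^{d}$ agrees with all three node types of $\Omega$ (leftmost, interior, rightmost); the fact that $(w')^{k}$ is a factor of $w^{k+1}$ does transport the conclusion of Corollary~\ref{cor:wk} to the cyclic shift; and handling $\rho<0$ by the isomorphism $\mathrm{BS}(n,m)\to\mathrm{BS}(m,n)$, $a\mapsto a$, $t\mapsto t^{-1}$, is tidier than the paper's mirrored rerun --- though you should say explicitly that this isomorphism carries $\langle a\rangle$ to $\langle a\rangle$ and therefore extends to a topological isomorphism $G_{n,m}\cong G_{m,n}$; scale preservation requires the isomorphism of completions, not merely of the abstract groups.

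The one genuine gap is the case $\rho=0$. There the cycle lemma is unavailable, and your step rules were established only for paths that stay off the right side of $\Omega$; but for $\rho=0$ the path can certainly touch the right side (if $w$ begins with $t^{-1}$, the very first edge descends the right side from node $1$, and contact recurs whenever the running sum returns to its minimum). So the assertion ``bounded prefix sums imply $(\ell_k,d_k)$ bounded'' is exactly the point that needs proof, and nothing you have established covers it. It is true, and here is a short repair which in fact subsumes your cycle-lemma step: reading $t$ decreases the distance $d$ from the left side by $1$ unless $d=0$, in which case it increases the level; reading $t^{-1}$ decreases the distance $e=\ell-d$ from the right side by $1$ unless $e=0$, in which case it increases the level. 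Hence $d$ and $e$ are reflected walks, and solving the recursions gives, \emph{unconditionally}, $d=M^{+}(q)-\rho(q)$ and $\ell=M^{+}(q)-M^{-}(q)$ for every prefix $q$ of $p(w)^k$, where $M^{+}(q)$ and $M^{-}(q)$ denote the maximum and minimum prefix sums of $q$. For $\rho=0$ the prefix sums of $p(w)^k$ are periodic, so $\ell$ and $d$ do not depend on $k$ and $s(x)=1$. Moreover, for $\rho>0$ this identity gives $M^{+}=(k-1)\rho+t_{\max}(w)$ and $M^{-}=M^{-}(w)$ for an arbitrary pinch-free $w$, so $d_k$ is bounded and $\ell_k=k\rho+O(1)$, yielding the limit $(l/|n|)^{\rho}$ with no normalisation of $w$ needed at all; with this identity in hand your proof becomes uniform across all three cases.
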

\begin{proof}
We first consider the case $\rho\geq 0$. Take $w$ conjugate to $x$ such that $w^k$ is freely reduced and has no pinches, given by Corollary \ref{cor:wk}.
If $w$ has no $t^{\pm 1}$ letters then clearly the closure $\langle a\rangle$ is minimising for $w$ are we are done.
If $w$ contains no $t^{-1}$ letters, then $p(w^k)$ is a path in $\Omega$ from $1$ to $m\left(\frac{\lcm(m,n)}{n}\right)^{\rho^k-1}$ down the left side $\rho^k$ edges, where $\rho$ is the number of $t$ letters in $w$. It follows (from Lemma \ref{lem:travel}) that $w^{-k}\langle a\rangle w^k\cap \langle a\rangle=\langle a^{m\left(\frac{\lcm(m,n)}{n}\right)^{\rho^k-1}}\rangle$, so by Theorem \ref{thm:Moller}
\[\begin{array}{lll}
s(w)& = & \lim_{k\rightarrow \infty} \left[{V} : {V}\cap
w^{-k}{V}w^k\right]^{\frac1{k}}\\
& = & \lim_{k\rightarrow \infty} \left(|m|\left(\frac{\lcm(m,n)}{|n|}\right)^{\rho^k-1}\right)^{\frac1{k}}\\
& = & \left(\frac{l}{|n|}\right)^{\rho}
\end{array}\]
where $V$ is the closure of $\langle a\rangle$.

Now suppose $w$ contains both $t^{-1}$ and $t$ letters. Then $w$ is conjugate to a word $u=tvt^{-1}a^{\eta}, \eta\in\mathbb Z$. Let $N$ be the number of $t^{-1}$ letters in $u$.
We will compute the scale of the words $t^{2N}u^kt^{-2N}$ which is conjugate to $x$, is freely reduced and contains no pinches.

%Note that $p(t^{2N}u)$ starts at $1$ and travels down the left side of $\Omega$ for $2N$ edges, then along the path labeled $p(u)$ which starts with $t$. Since the length of level $2N+1$ is $2N+1$ (by Corollary \ref{cor:lengthDelta}) and $u$ contains $N$ $t^{-1}$ letters, then 
%$p(t^{2N}u)$ will never reach the right side of $\Omega$. 

Let  $t_{\max}$ be the maximum $t$-exponent sum of any prefix of $u$, and  write $u=u_1u_2$ where  $u_1$ has $t$-exponent sum $t_{\max}$, and $w_2$ has $t$-exponent sum $\mu=\rho-t_{\max}\leq 0$  as in Lemma \ref{lem:newlem}.
Then  
by Lemma \ref{lem:newlem}, the path labeled $p(t^{2N}u)$ starting at 1 ends at distance $\mu$ from the left side of $\Omega$, and at level $2N+t_{\max}$.  Put $L=2N+t_{\max}$ and call the endpoint of 
 $p(t^{2N}u)$
the point $P_1$.

Now $p(t^{2N}u^2)$ travels along $p(t^{2N}u)$ to  $P_1$, then along a path labeled $p(u)$.  Since $u$ has $N$ $t^{-1}$ letters, $p(t^{2N}u^2)$ also does not reach the right side of $\Omega$.

Since $t_{\max}=\rho+|\mu|$,  the path $p(u_1)$ starting at $P$  must reach the left side and either travel along it for at least one $t$ edge (if $\rho>0$), or if $\rho=0$ it will stay on level $L$ and end on the left side.

In the case that $\rho=0$, $p(t^{2n}u^k)$ will end at the point $P_1$ for all $k\geq 1$.  Thus  $p(t^{2n}u^kt^{-2N})$ will end at the same point  in $\Omega$ for all $k\geq 1$, so
the index of $u^{-k}\langle a\rangle u^k\cap \langle a\rangle$ is constant as $k$ increases, and
by Theorem \ref{thm:Moller} the scale of $u$ is 1.

In the case that $\rho>0$ and $p(t^{2N}uu_1)$ travels down a level from the point $P_1$, it must end on the left side since $u_1$ has maximal $t$-exponent sum. Since the $t$-exponent sum of $u_2$ is nonpositive, the path $p(u_2)$ from the left side stays at this level, and ends distance $|\mu|$ from the left side once again. Let $P_2$ be the endpoint of $p(t^{2N}u^2)$. Then the geodesic path from $P_1$ to $P_2$ in $\Omega$ is $t^{\mu}t^{d}t^{-\mu}$ (since the left side is closer than the right at level $2N+t_{\max}$), where $d$ is the difference in levels, and since $u$ is also a path from $P_1$ to $P_2$ and has $t$-exponent sum $\rho$, we must have $d=\rho$.

Inductively we have that $p(t^{2N}u^k)$ ends at a point $P_k$ at level $L+\rho^{k-1}$ at distance $|\mu|$ from the left side.
For $k$ sufficiently large,  the point $P_k$  lies further than $2N$ from the right side of $\Omega$, so  $p(t^{2N}u^kt^{-2N})$ ends  at a point labeled 
\[\begin{array}{lll}
l\left(\frac{l}{n}\right)^{L+\rho^{k-1}-|\mu|-2N}\left(\frac{l}{m}\right)^{|\mu|-1+2N}
& = & l\left(\frac{l}{n}\right)^{t_{\max}+\rho^{k-1}-|\mu|}\left(\frac{l}{m}\right)^{2N+|\mu|-1}.\end{array}\]
%Let $z_k=t^{2N}u^kt^{-2N}$, so $z_k$ is conjugate to $u^k$ and $x^k$.

Applying Theorem \ref{thm:Moller} we have \[\begin{array}{lll}
s(x)& = & \lim_{k\rightarrow \infty} \left[{V} : {V}\cap
(t^{2N}u^kt^{-2N})^{-1}{V}(t^{2N}u^kt^{-2N})\right]^{\frac1{k}}\\
& = & \lim_{k\rightarrow \infty} \left( l\left(\frac{l}{|n|}\right)^{t_{\max}+\rho^{k-1}-|\mu|}\left(\frac{l}{|m|}\right)^{2N+|\mu|-1}\right)^{\frac1{k}}\\
& = & \left(\frac{l}{|n|}\right)^{\rho}
\end{array}\]
where $V$ is the closure of $\langle a\rangle$.

The case that $\rho<0$ is treated with the same argument as above, exchanging the left side of $\Omega$ for the right, and considering $t^{-2N}u^kt^{2N}$.
\end{proof}

Putting the results of this section together, and noting that scales of elements of $G_{m,\pm m}$ are 1, we have
%Corollaries \ref{cor:dense} and \ref{cor:indexG} we have

\begin{cor}\label{cor:scales}
If $x\in BS(m,n)$ for $m,n\neq 0$ has $t$-exponent sum $\rho$ then $s(x)=\left(\frac{\lcm(m,n)}{|n|}\right)^{\rho}$ if $\rho\geq 0$ and  $ \left(\frac{\lcm(m,n)}{|m|}\right)^{|\rho|}$ if $\rho<0$.
\end{cor}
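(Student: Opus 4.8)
The plan is to obtain the corollary by assembling the three situations already treated and using a symmetry to halve the work. First I would observe that the stated value depends only on $|m|$, $|n|$, $\lcm(m,n)=\lcm(|m|,|n|)$ and $\rho$, and that it is unchanged under the simultaneous substitution $(m,n,\rho)\mapsto(n,m,-\rho)$: for $\rho\geq0$ the first branch gives $(\lcm(m,n)/|n|)^{\rho}$, which agrees with the second branch evaluated at the pair $(n,m)$ and exponent $-\rho$, namely $(\lcm(m,n)/|n|)^{|-\rho|}$, and the case $\rho<0$ is symmetric. This substitution is realised by the isomorphism $\mathrm{BS}(m,n)\to\mathrm{BS}(n,m)$, $a\mapsto a$, $t\mapsto t^{-1}$, which negates the $t$-exponent sum. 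It therefore suffices to prove the formula under the assumption $|m|\leq|n|$.

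With that reduction I would split on the divisibility of $n$ by $m$. If $|m|=|n|$, then $m=\pm n$ and, as noted in Section \ref{sec:construction}, $G_{m,\pm m}$ is discrete, so $s(x)=1$ for all $x$; since $\lcm(m,n)=|m|=|n|$, both $\lcm(m,n)/|m|$ and $\lcm(m,n)/|n|$ equal $1$ and the formula also evaluates to $1$. If $|m|<|n|$ and $m\mid n$, write $n=mr$ with $|r|>1$; then $\lcm(m,n)=|n|$, so the formula reads $s(x)=1$ for $\rho\geq0$ and $s(x)=|r|^{|\rho|}$ for $\rho<0$, which is exactly Proposition \ref{prop:divisors}. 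If $|m|<|n|$ and $m\nmid n$, then neither of $m,n$ divides the other (the inequality $|m|<|n|$ already excludes $n\mid m$), and Proposition \ref{prop:scales} supplies the formula verbatim.

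The one step needing care is the symmetry. I would check that $a\mapsto a$, $t\mapsto t^{-1}$ is a genuine isomorphism $\mathrm{BS}(m,n)\cong\mathrm{BS}(n,m)$---substituting into $ta^mt^{-1}=a^n$ and setting $s=t^{-1}$, $b=a$ yields $sb^ns^{-1}=b^m$, the defining relation of $\mathrm{BS}(n,m)$---and that, because it maps $\langle a\rangle$ bijectively onto $\langle a\rangle$ and is equivariant on cosets, it induces an isomorphism of the two permutation representations and hence a topological isomorphism $G_{m,n}\cong G_{n,m}$ of the completions. Since the scale is an invariant of topological group isomorphism (such an isomorphism carries compact open subgroups to compact open subgroups preserving indices), the values transport correctly. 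Everything else is the routine bookkeeping of $\lcm$ in the divisor case, so I do not anticipate any serious obstacle beyond pinning down this symmetry.
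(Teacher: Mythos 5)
Your proof is correct and is essentially the paper's own argument: Corollary \ref{cor:scales} is obtained there by combining Proposition \ref{prop:divisors}, Proposition \ref{prop:scales}, and the observation that $G_{m,\pm m}$ is discrete (so all scales equal $1$). The one refinement you add is making explicit the isomorphism $\mathrm{BS}(m,n)\cong\mathrm{BS}(n,m)$ ($a\mapsto a$, $t\mapsto t^{-1}$) and the induced topological isomorphism $G_{m,n}\cong G_{n,m}$; since the paper proves Proposition \ref{prop:divisors} only for $n=mr$ and leaves the case where $n$ properly divides $m$ to this implicit symmetry, your reduction supplies exactly the bookkeeping needed to make the assembly airtight.
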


and so 
\begin{thm}\label{thm:main}
 The set of scales of elements in $G_{m,n}$ for $m,n\neq 0$ is $$\displaystyle \left\{  \left(\frac{\lcm(m,n)}{|n|}\right)^{\rho},  \left(\frac{\lcm(m,n)}{|m|}\right)^{\rho} \ \mid \ \rho\in\mathbb N\right\}.$$
\end{thm}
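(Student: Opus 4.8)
The plan is to combine the two corollaries established in this section, so that essentially no new work is required. First I would invoke Corollary~\ref{cor:dense}: since $\mathrm{BS}(m,n)$ embeds as a dense subgroup of $G_{m,n}$ (as established in Section~\ref{sec:construction}) whenever $|m|\neq|n|$, the set of scale values realised on $G_{m,n}$ coincides with the set of scale values realised on the dense subgroup $\mathrm{BS}(m,n)$. This reduces the problem to determining $\{s(x)\mid x\in\mathrm{BS}(m,n)\}$, a computation carried out entirely inside the abstract group. The degenerate case $|m|=|n|$ I would dispatch separately and immediately: there $G_{m,\pm m}$ is discrete, so every scale equals $1$, which matches the displayed set since then $\lcm(m,n)/|n|=\lcm(m,n)/|m|=1$ and the set collapses to $\{1\}$.

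Next I would apply Corollary~\ref{cor:scales}, which shows that $s(x)$ depends only on the $t$-exponent sum $\rho=\rho(x)$, taking the value $\left(\frac{\lcm(m,n)}{|n|}\right)^{\rho}$ when $\rho\geq 0$ and $\left(\frac{\lcm(m,n)}{|m|}\right)^{|\rho|}$ when $\rho<0$. To see that every element of the claimed set is attained, I would note that the $t$-exponent sum is a well-defined invariant of group elements by Lemma~\ref{lem:texpinvariant}, and that it is surjective onto $\mathbb{Z}$, since $t^k$ has $t$-exponent sum $k$ for every $k\in\mathbb{Z}$. Hence as $x$ ranges over $\mathrm{BS}(m,n)$, the exponent $\rho$ ranges over all of $\mathbb{Z}$, and every value produced by the two formulas is genuinely realised.

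Finally I would assemble the two families. The values arising from $\rho\geq 0$ are exactly $\left\{\left(\frac{\lcm(m,n)}{|n|}\right)^{\rho}\mid \rho\in\mathbb{N}\right\}$, and those arising from $\rho<0$ are exactly $\left\{\left(\frac{\lcm(m,n)}{|m|}\right)^{\sigma}\mid \sigma\geq 1\right\}$; their union is precisely the set in the statement once one observes that the two families already agree at the index $0$, both giving the common value $1$, so nothing is missed. There is no genuine obstacle in this final step: the substance of the theorem lies in Proposition~\ref{prop:divisors} and Proposition~\ref{prop:scales}, which together yield Corollary~\ref{cor:scales}, and in the continuity of the scale function underlying Corollary~\ref{cor:dense}. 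The only point demanding a moment's care is the bookkeeping that matches the sign convention on $\rho$ to the two ratios in the displayed set, and the observation that the case $\rho=0$ is shared between the two families rather than contributing two distinct values.
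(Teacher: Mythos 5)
Your proposal is correct and takes essentially the same route as the paper: Theorem~\ref{thm:main} there is obtained precisely by combining Corollary~\ref{cor:dense} (density of $\mathrm{BS}(m,n)$ plus continuity of the scale) with Corollary~\ref{cor:scales}, noting separately that all scales in the discrete case $|m|=|n|$ equal $1$. Your explicit remarks on the surjectivity of the $t$-exponent sum onto $\mathbb{Z}$ and on the shared value at $\rho=0$ merely spell out bookkeeping the paper leaves implicit.
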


\section{The modular function and flat rank}\label{sec:flatRank}

The
{\em modular function} of a locally compact group $G$ is the homomorphism  $\Delta:G\rightarrow \mathbb R^+$ which measures how far a left-invariant Haar integral on $G$ is from being right-invariant (see for example \cite{\HewittRoss}).

In \cite{\WillStructure,\WillFurther} the second author proves that in a totally disconnected locally compact group,  $\Delta(x)=s(x)/s(x^{-1})$.
It follows from Corollary \ref{cor:scales} that the modular function for elements of $BS(m,n)$ of $t$-exponent sum $\rho$ is $\displaystyle \left|\frac{m}{n}\right|^{\rho}$.

 \begin{lem}
 If $x\in G_{m,n}$ then there is some $w\in \mathrm{BS}(m,n)$ such that $s(x)=s(w)$ and $s(x^{-1})=s(w^{-1})$.
 \end{lem}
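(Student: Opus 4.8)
The plan is to strengthen the single-value statement of Corollary~\ref{cor:dense} so that it controls $s(x)$ and $s(x^{-1})$ simultaneously, by combining the continuity of the scale with the density of $\mathrm{BS}(m,n)$ in $G_{m,n}$. The essential observation is that \emph{both} of the functions $y\mapsto s(y)$ and $y\mapsto s(y^{-1})$ are continuous maps from $G_{m,n}$ into $\mathbb Z^+$: the first is Proposition~\ref{prop:scale_properties}(i), and the second is the composition $s\circ\iota$, where $\iota:y\mapsto y^{-1}$ is the inversion map, which is a homeomorphism of the topological group $G_{m,n}$.

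Since $\mathbb Z^+$ carries the discrete topology, any continuous function $G_{m,n}\to\mathbb Z^+$ is locally constant, so the preimage of a single integer is open. First I would therefore form the set
\[
W=\{\, y\in G_{m,n}\mid s(y)=s(x)\,\}\cap\{\, y\in G_{m,n}\mid s(y^{-1})=s(x^{-1})\,\},
\]
which is open, being the intersection of the preimages of the singletons $\{s(x)\}$ and $\{s(x^{-1})\}$ under the two continuous maps above. By construction $x\in W$, so $W$ is a nonempty open subset of $G_{m,n}$. Finally I would invoke density: because $\mathrm{BS}(m,n)$ is dense in $G_{m,n}$, the open set $W$ meets $\mathrm{BS}(m,n)$, and any $w\in W\cap\mathrm{BS}(m,n)$ satisfies $s(w)=s(x)$ and $s(w^{-1})=s(x^{-1})$, as required.

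There is no serious obstacle here; the only point that requires care is the reason Corollary~\ref{cor:dense} cannot simply be quoted. That corollary produces, for each prescribed scale value, \emph{some} dense point realizing it, but a priori a point matching $s(x)$ need not match $s(x^{-1})$. The resolution is exactly that both $s$ and $s\circ\iota$ are locally constant, so their level sets through $x$ intersect in an open neighbourhood on which \emph{both} values are pinned, and density then supplies a single $w\in\mathrm{BS}(m,n)$ in that neighbourhood.
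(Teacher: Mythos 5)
Your proposal is correct and matches the paper's own argument essentially verbatim: the paper likewise intersects the open sets $s^{-1}(c)$ and $\iota(s^{-1}(d))$ (where $\iota$ is inversion, a homeomorphism) and then applies density of $\mathrm{BS}(m,n)$ in $G_{m,n}$. Your additional remark on why Corollary~\ref{cor:dense} alone does not suffice is accurate but does not change the substance.
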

  \begin{proof}
Let $s(x)=c$ and $s(x^{-1})=d$. Let $\iota$ be the anti-automorphism of  $G_{m,n}$ which sends each group element to its inverse. 
Since $s$ and $\iota$ are continuous, the sets $s^{-1}(c)$ and $\iota(s^{-1}(d))$ are open, so their intersection $U$ is open and contains $x$. 
Since  $ \mathrm{BS}(m,n)$ is dense in $G_{m,n}$, $U$ contains an element $w\in \mathrm{BS}(m,n)$, so $s(w)=c$, and $s(w^{-1})=d$.
  \end{proof}

 It follows that $\displaystyle \Delta(x)= \left|\frac{m}{n}\right|^{\rho}$ for some $\rho\in\mathbb Z$ for all $x\in G_{m,n}$.
 
 \begin{cor}\label{cor:texpmap}
The $t$-exponent map extends continuously to $G_{m,n}$ and is a group homomorphism.\end{cor}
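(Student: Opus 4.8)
The plan is to use the structural facts just established about the modular function $\Delta$ together with the $t$-exponent sum $\rho$, which was shown to be a well-defined invariant on $\mathrm{BS}(m,n)$ in Lemma~\ref{lem:texpinvariant}. The key identity is that for $x\in\mathrm{BS}(m,n)$ of $t$-exponent sum $\rho$, the preceding computation gives $\Delta(x)=\left|\frac{m}{n}\right|^{\rho}$. Since we are in the case $|m|\neq|n|$, the base $\left|\frac{m}{n}\right|$ is not equal to $1$, so the map $\rho\mapsto\left|\frac{m}{n}\right|^{\rho}:\mathbb Z\to\mathbb R^+$ is injective. This means $\rho$ can be recovered from $\Delta$ on the dense subgroup $\mathrm{BS}(m,n)$, which is exactly the hook needed to extend $\rho$ continuously.

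First I would make precise the definition of the extension. Since $\Delta:G_{m,n}\to\mathbb R^+$ is continuous (being the ratio $s(x)/s(x^{-1})$ of continuous functions, using Proposition~\ref{prop:scale_properties}(i)), and since the immediately preceding remark shows $\Delta(x)=\left|\frac{m}{n}\right|^{\rho}$ for some $\rho\in\mathbb Z$ for \emph{all} $x\in G_{m,n}$, I would define the extended $t$-exponent map $\tilde\rho:G_{m,n}\to\mathbb Z$ by setting $\tilde\rho(x)$ to be the unique integer $\rho$ with $\Delta(x)=\left|\frac{m}{n}\right|^{\rho}$. Uniqueness follows from injectivity of $\rho\mapsto\left|\frac{m}{n}\right|^{\rho}$, which holds precisely because $|m|\neq|n|$.

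Next I would verify the two required properties. For agreement with the original $t$-exponent sum on $\mathrm{BS}(m,n)$: by construction $\Delta(w)=\left|\frac{m}{n}\right|^{\rho(w)}$, so $\tilde\rho(w)=\rho(w)$ by uniqueness. That $\tilde\rho$ is a homomorphism follows because $\Delta$ is a homomorphism into $\mathbb R^+$ and the map $\log_{|m/n|}:\left\langle\left|\frac{m}{n}\right|\right\rangle\to\mathbb Z$ onto the cyclic group generated by $\left|\frac{m}{n}\right|$ is a group isomorphism; composing, $\tilde\rho=\log_{|m/n|}\circ\,\Delta$ is a composite of homomorphisms. For continuity: $\mathbb Z$ carries the discrete topology, and $\tilde\rho$ is constant on each set $\Delta^{-1}\!\left(\left|\frac{m}{n}\right|^\rho\right)$, which is open by continuity of $\Delta$; hence $\tilde\rho$ is locally constant, therefore continuous.

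The main obstacle, such as it is, is ensuring that the hypothesis $|m|\neq|n|$ is genuinely in force so that the base $\left|\frac{m}{n}\right|\neq 1$ and injectivity holds; this is exactly the standing assumption of the section (the construction being trivial when $|m|=|n|$), so there is no real difficulty. I would simply flag that in the degenerate case $\Delta\equiv 1$ and the extension statement is vacuous or the group is discrete, so the content lies entirely in the case already under consideration. Everything else reduces to the observation that $\tilde\rho$ is, up to the isomorphism $\log_{|m/n|}$, nothing other than the modular function itself, from which both the homomorphism property and continuity are inherited immediately.
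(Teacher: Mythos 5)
Your proposal is correct and follows essentially the same route as the paper: the paper's proof likewise defines the extension as $\phi(x)=\log_{m/n}\left(\Delta(x)\right)$, inheriting the homomorphism property and continuity from the modular function. Your write-up merely makes explicit the details the paper leaves implicit (well-definedness via the lemma that $\Delta$ takes values in $\left\{\left|\frac{m}{n}\right|^{\rho}\mid\rho\in\mathbb Z\right\}$, injectivity of the base requiring $|m|\neq|n|$, and continuity via local constancy).
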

 \begin{proof}
The map $\phi:G_{m,n}\rightarrow \mathbb Z$ defined by $\displaystyle \phi(x)=\log_{m/n}\left( \Delta(x)\right)$ is a well defined group homomorphism, and $\phi\mid_{\mathrm{BS}(m,n)}$ is the $t$-exponent sum map.
 \end{proof}
 
 Now that each element of $G_{m,n}$ has a well defined notion of $t$-exponent sum, we can sharpen the main theorem to
 \begin{cor}
 The scale of $x\in G_{m,n}$ is 
 $\left(\frac{\lcm(m,n)}{c}\right)^{|\rho|}$  where $\rho$ is the $t$-exponent sum of $x$,  $c=|m|$ if $\rho\geq 0$ and $c=|n|$ if $\rho<0$.
 \end{cor}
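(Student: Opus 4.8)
The plan is to transfer the already-established formula from the dense subgroup $\mathrm{BS}(m,n)$, where scales are computed in Corollary~\ref{cor:scales}, to the whole of $G_{m,n}$, using the continuity of both the scale function and the extended $t$-exponent map. By Corollary~\ref{cor:texpmap} every $x\in G_{m,n}$ carries a well-defined $t$-exponent sum $\rho=\phi(x)\in\mathbb Z$, where $\phi=\log_{m/n}\circ\,\Delta$, and $\phi$ restricts on $\mathrm{BS}(m,n)$ to the usual $t$-exponent sum. The goal is therefore to show that $s(x)$ depends only on $\rho$, through the stated expression.

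First I would fix $x\in G_{m,n}$ and set $\rho=\phi(x)$. Since $s:G_{m,n}\to\mathbb Z^+$ is continuous (Proposition~\ref{prop:scale_properties}(i)) and $\phi:G_{m,n}\to\mathbb Z$ is continuous (Corollary~\ref{cor:texpmap}), and both codomains are discrete, the preimages $s^{-1}(\{s(x)\})$ and $\phi^{-1}(\{\rho\})$ are open and each contains $x$; hence their intersection $W$ is an open neighbourhood of $x$. Because $\mathrm{BS}(m,n)$ is dense in $G_{m,n}$, the set $W\cap\mathrm{BS}(m,n)$ is non-empty, so I may choose $w$ in it. Then $s(w)=s(x)$, and since $\phi$ agrees with the $t$-exponent sum on $\mathrm{BS}(m,n)$, the word $w$ has $t$-exponent sum $\rho$. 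Substituting $w$ into Corollary~\ref{cor:scales} gives $s(x)=s(w)=\bigl(\tfrac{\lcm(m,n)}{c}\bigr)^{|\rho|}$ with $c$ the one of $|m|,|n|$ dictated there by the sign of $\rho$. An equivalent route is to apply the lemma preceding Corollary~\ref{cor:texpmap} to get $w\in\mathrm{BS}(m,n)$ with $s(x)=s(w)$ and $s(x^{-1})=s(w^{-1})$; then $\Delta(w)=s(w)/s(w^{-1})=\Delta(x)$ forces $\phi(w)=\rho$, and one again invokes Corollary~\ref{cor:scales}.

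This is essentially a routine density-and-continuity transfer, so I do not expect a genuine obstacle; the only points needing care are that $\phi$ truly extends the abstract $t$-exponent sum (which is exactly the content of Corollary~\ref{cor:texpmap}) and the bookkeeping that pairs $\rho\geq 0$ and $\rho<0$ with the correct base. I would also record the degenerate case $|m|=|n|$: there $G_{m,\pm m}$ is discrete, so $s\equiv 1$, while $\lcm(m,n)/|m|=\lcm(m,n)/|n|=1$, whence the formula reads $1^{|\rho|}=1$ and remains valid.
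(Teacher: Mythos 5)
Your transfer argument is sound, and it is essentially the paper's own justification: the corollary is stated there without a separate proof precisely because it is immediate from the unnamed lemma preceding Corollary~\ref{cor:texpmap} (which supplies $w\in\mathrm{BS}(m,n)$ with $s(w)=s(x)$ and $s(w^{-1})=s(x^{-1})$, hence $\Delta(w)=\Delta(x)$ and $\phi(w)=\phi(x)$) together with Corollary~\ref{cor:scales}; both of the routes you describe are exactly this argument.

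There is, however, one discrepancy you glossed over with the phrase ``dictated there by the sign of $\rho$''. What your transfer proves is the assignment of Corollary~\ref{cor:scales}: $c=|n|$ when $\rho\geq 0$ and $c=|m|$ when $\rho<0$. The statement as printed asserts the opposite assignment ($c=|m|$ for $\rho\geq 0$, $c=|n|$ for $\rho<0$), and the two differ whenever $|m|\neq|n|$. The printed version is a misprint in the paper: for $x=t$ (so $\rho=1$), Theorem~\ref{thm:Moller} with $V=\overline{\langle a\rangle}$ gives
$$
s(t)=\lim_{k\to\infty}\left[\,|m|\left(\frac{\lcm(m,n)}{|n|}\right)^{k-1}\right]^{1/k}=\frac{\lcm(m,n)}{|n|},
$$
in agreement with Proposition~\ref{prop:scales} and Proposition~\ref{prop:scale_again}, but not with the printed corollary, which would give $\lcm(m,n)/|m|$. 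So your proof is correct mathematics, but as a proof of the literal statement it cannot succeed, since the literal statement is false unless $|m|=|n|$. A careful write-up should say explicitly that the two cases for $c$ in the corollary must be interchanged to be consistent with Corollary~\ref{cor:scales}, rather than deferring that bookkeeping to ``dictated there by the sign of $\rho$''. Your remark on the degenerate case $|m|=|n|$ (where both formulas collapse to $s\equiv 1$) is correct and is the only situation in which the printed pairing is harmless.
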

%So even though {\em limit points} in $G_{m,n}$ are not expressible as words over $\{a^{\pm 1}, t^{\pm 1}\}$, the topological group structure endows them with a notion of $t$-exponent sum.

A subgroup $P$ of a totally disconnected locally compact group is said to be {\em flat} if some compact open subgroup $V$ is minimizing for all $x\in P$. For example, if $x\in G$ is any element then $\left\langle x\right\rangle $ is flat by Proposition \ref{prop:scale_properties}$(iii)$.
Let $P$ be flat in $G$, with $V$ minimizing for each $x\in P$, 
consider the set $P_1=\{x\in P \ | \ s(x)=s(x^{-1})=1\}$. It follows from Proposition \ref{prop:scale_properties}$(v)$ that $P_1$ is a subgroup of $P$, and 
by Proposition \ref{prop:scale_properties}$(iv)$ that
$P_1$ is normal. This subgroup is 
 called the {\em uniscalar subgroup} of $P$. The second author showed that  
$P$ modulo its uniscalar subgroup is free abelian of some rank $r\in \N\cup \{\infty\}$ \cite{\WillTidy}. The number $r$ is called the {\em flat rank} of $P$. 
Define the {\em flat rank} of a totally disconnected locally compact group $G$  to be the supremum of the flat rank over all  flat subgroups of $G$.

%%NOT SURE IF THIS IS RIGHT 
% A topological group is said to be {\em hyperbolic} if it has a compact generating  set such that the corresponding Cayley graph is Gromov-hyperbolic.
% In \cite{\HypFlatRank} it is shown that the flat rank of a hyperbolic group is at most 1. Since $G_{m,n}$ is a closed cocompact subgroup of the automorphism group of the corresponding Bass-Serre tree (see Section \ref{sec:BassSerre}) they are hyperbolic.
 %although BS(m,n) is not hyperbolic, the relative profinite completion is and to state what I said in my previous email

\begin{prop}
For $m,n\neq 0$, the flat rank of $G_{m,n}$ is 1 for $|m|\neq |n|$, and 0 for $|m|=|n|$.
\end{prop}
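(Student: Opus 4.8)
The plan is to split into the cases $|m|=|n|$ and $|m|\neq|n|$, treating the first by a direct discreteness argument and the second by establishing the two inequalities flat rank $\geq 1$ and flat rank $\leq 1$ separately.

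For $|m|=|n|$ I would invoke the fact established in Section~\ref{sec:construction} that $G_{m,\pm m}$ is discrete. Then $V=\{\ident\}$ is a compact open subgroup with $x^{-1}Vx=V$ for every $x$, so $s(x)=s(x^{-1})=1$ for all $x$ by Proposition~\ref{prop:scale_properties}$(v)$. Every element is therefore uniscalar, so for any flat subgroup $P$ one has $P_1=P$ and $P/P_1$ is trivial; hence the flat rank is $0$.

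Now assume $|m|\neq|n|$. For the lower bound I would exhibit a single flat subgroup of rank $1$: take $P=\langle t\rangle$, which is flat by Proposition~\ref{prop:scale_properties}$(iii)$. Since $\Delta(t)=s(t)/s(t^{-1})=|m/n|\neq 1$, the scales $s(t)$ and $s(t^{-1})$ cannot both equal $1$, so $t\notin P_1$. As $P/P_1$ is free abelian by \cite{\WillTidy} and nontrivial, its rank is at least $1$, and so the flat rank of $G_{m,n}$ is at least $1$.

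The heart of the argument is the upper bound, showing no flat subgroup can have rank $\geq 2$. The key observation is that by Corollary~\ref{cor:scales} the scale of any $x\in G_{m,n}$ depends only on its $t$-exponent sum $\rho=\phi(x)$, where $\phi$ is the continuous homomorphism of Corollary~\ref{cor:texpmap}; in particular $\rho=0$ forces $s(x)=s(x^{-1})=1$. Let $P$ be any flat subgroup with uniscalar subgroup $P_1$. The restriction $\phi|_P:P\to\Z$ has kernel consisting of elements of $t$-exponent sum $0$, all of which are uniscalar by the observation, so $\ker(\phi|_P)\subseteq P_1$. Consequently the free abelian group $P/P_1$ is a homomorphic image of $P/\ker(\phi|_P)\cong\phi(P)\leq\Z$; being a quotient of a cyclic group that is also free abelian, it has rank at most $1$. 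Combining the two bounds gives flat rank exactly $1$. I do not expect a genuine obstacle here: once the scale is seen to factor through the $t$-exponent homomorphism, the rank bound is purely group-theoretic, and the only point needing care is the inclusion $\ker(\phi|_P)\subseteq P_1$, which is immediate from the $\rho=0$ case of the scale formula.
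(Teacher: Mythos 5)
Your proposal is correct and follows essentially the same route as the paper: the $|m|=|n|$ case is handled by discreteness, the lower bound comes from the flat subgroup $\langle t\rangle$, and the upper bound comes from the fact that elements of $t$-exponent sum $0$ are uniscalar, so that $P/P_1$ is controlled by the image of $P$ under the homomorphism $\phi$ of Corollary~\ref{cor:texpmap}. The only cosmetic difference is that the paper packages this as an embedding $P/P_1\hookrightarrow G_{m,n}/E\cong\Z$ where $E=\ker\phi$ (using $P_1=P\cap E$), whereas you use the inclusion $\ker(\phi|_P)\subseteq P_1$ and pass to a quotient of a subgroup of $\Z$; both are the same argument.
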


\begin{proof}
If $|m|=|n|$, then $G_{m,n}$ has the discrete topology and all elements have scale 1, and so the flat rank of any flat subgroup is 0.

Suppose that $|m|\neq |n|$ and let $P$ be a flat subgroup of 
 $G_{m,n}$.  Let $E$ be the kernel of the $t$-exponent map defined in Corollary \ref{cor:texpmap}, which is the set of elements $x$ in $G_{m,n}$ with $s(x)=s(x^{-1})=1$.
Then $P/P_1$ embeds into  $G_{m,n}/E$.
Since the $t$-exponent sum map is surjective, $G_{m,n}/E$ is isomorphic to $\mathbb Z$.
Hence $P/P_1$ embeds into $\mathbb Z$ and the rank of $P/P_1$ is at most $1$.
 Since $\langle t\rangle$ is a flat subgroup of flat rank 1, the flat rank of  $G_{m,n}$ is equal to 1.
 \end{proof}

%Another perspective that it would be worthwhile adding is that \mathrm{BS}(m,n) is the semidirect product of the integers and a normal subgroup N with the property that every element of N centralizes a finite index subgroup of $<a>$. Then N is contained in what is called the quasi-centre of \mathrm{BS}(m,n) (or its completion $G_{m,n}$), and is therefore uniscalar.

  \section{The local structure of $G_{m,n}$}
 \label{sec:local}

In this section we give a more detailed description of the closure of $\langle a\rangle$ in $G_{m,n}$.  Using this description and the machinery developed by the second author in \cite{\WillStructure, \WillFurther, \WillTidy}
 the computation of scale becomes much faster.
% (Yet another way to compute scales would be to consider the action on the Bass-Serre tree !!
  Since we wish to reprove results from the previous two sections, we only assume results from Sections  \ref{sec:scale} to  \ref{sec:construction}. We do however continue to abuse notation  and 
identify elements and subsets of 
 $\mathrm{BS}(m,n)$  with their images under the embedding $\pi$.

Recall that an  inverse (or projective) system of groups and homomorphisms is  a family  of  groups   $(A_i)_{i\in I}$  indexed by a directed poset $(I, \leq)$, and a family of homomorphisms $f_{ij}: A_j \rightarrow  A_i$ for all $i \leq j$ %(note the order)
 with the following properties: 
\begin{enumerate}
\item $f_{ii}$ is the identity on $A_i$,
\item $f_{ik} = f_{ij} \circ f_{jk}$ for all $i \leq j \leq k$.\end{enumerate}
The {\em inverse limit} of the inverse system $((A_i)_{i\in I}, (f_{ij})_{i\leq  j \in I})$ is the subgroup
\[\varprojlim_{i\in I} A_i=\left\{(a_i)_{i\in I} \in \prod_{i\in I} A_i \mid a_i=f_{ij}(a_j) \ \mathrm{for} \ \mathrm{all} \ i\leq j \ \mathrm{in} \ I\right\}\]
of the 
direct product 
$\prod_{i\in I} A_i $.

An example of an inverse limit is the additive group of  $p$-adic integers -- take $A_i=\mathbb Z/p^i\mathbb Z$ and  $f_{ij}$ the remainder map modulo $p^i$.

If  each $A_i$ is  finite, the inverse limit is called a {\em profinite group}. It is a consequence of Tychonov's theorem that profinite groups are compact. Conversely, every compact totally disconnected group is isomorphic to a profinite group,  \cite[Theorem 1.1.12]{\RibesZ}. If a topological group contains a dense cyclic subgroup, it is said to be {\em monothetic} \cite[Definition 9.2]{\HewittRoss}. The closure of $\langle a\rangle$ in $G_{m,n}$  is compact, totally disconnected and monothetic. Hence, by \cite[Theorem 25.16]{\HewittRoss}, 
\begin{equation}
\label{eq:product_descrption}
\overline{\langle a\rangle} \cong \prod_{p\in \Gamma} Z_p,
\end{equation} 
where $\Gamma$ is the set of prime numbers and $Z_p$ is isomorphic to the $p$-adic integers, $\mathbb{Z}_p$, or a (finite) quotient of this group. 

The first aim of this section is determine the groups $Z_p$ by examining the details of the construction of $G\quot H$ given in Section \ref{sec:construction} for this particular case.  As seen in  Equation~(\ref{eq:profinite}), that construction identifies $\overline{\left\langle a\right\rangle}$ with a subgroup of a product of finite permutation groups which, since $\langle a \rangle$ is cyclic, act by cyclicly permuting the cosets in $\left\langle a\right\rangle.g\left\langle a\right\rangle$ for each $g$ in $\mathrm{BS}(m,n)$. Therefore Equation \ref{eq:profinite} implies that
\[
\overline{\left\langle a\right\rangle} \leq \prod \mathbb{Z}/d\mathbb{Z},\]
where the product is indexed by the cosets $g\left\langle a\right\rangle$ in $\mathrm{BS}(m,n)/\left\langle a\right\rangle$, and $d$ is the order of the corresponding orbit $\left\langle a\right\rangle.g\left\langle a\right\rangle$.

\begin{prop}
\label{prop:local_structure}
The compact open subgroup $\overline{\left\langle a\right\rangle}$ has an open subgroup $V$ such that
$$
V \cong \prod\left\{ \mathbb{Z}_p \mid p\hbox{ is a prime divisor of }\lcm(m,n)/{m}\hbox{ or }\lcm(m,n)/{n}\right\}.
$$
The quotient $\overline{\left\langle a\right\rangle}/V$ is a finite cyclic group with order dividing $\gcd(m,n)$.
\end{prop}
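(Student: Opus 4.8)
The plan is to determine the procyclic group $\overline{\langle a\rangle}$ from the lengths of the orbits of $\langle a\rangle$ on $\mathrm{BS}(m,n)/\langle a\rangle$. As noted after Equation~(\ref{eq:profinite}), $\overline{\langle a\rangle}$ embeds in $\prod_{g\langle a\rangle}\mathbb Z/d_g\mathbb Z$, where $d_g=[\langle a\rangle:\langle a\rangle\cap g\langle a\rangle g^{-1}]$ is the length of the orbit $\langle a\rangle.(g\langle a\rangle)$, and the image of the topological generator $a$ generates each finite cyclic factor. Consequently $\overline{\langle a\rangle}$ is the closure of this diagonally embedded copy of $\langle a\rangle$, hence the procyclic group of supernatural order $\sigma=\lcm\{d_g\mid g\in\mathrm{BS}(m,n)\}$. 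Writing $\sigma=\prod_p p^{n_p}$ with $n_p\in\{0,1,2,\dots,\infty\}$, the decomposition~(\ref{eq:product_descrption}) has $Z_p\cong\mathbb Z_p$ when $n_p=\infty$ and $Z_p\cong\mathbb Z/p^{n_p}\mathbb Z$ otherwise (see \cite{\RibesZ}); so the whole proposition reduces to computing the exponents $n_p$.

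First I would identify the orbit lengths. Applying Lemma~\ref{lem:travel} to $w=g^{-1}$ gives $\langle a\rangle\cap g\langle a\rangle g^{-1}=\langle a^{r}\rangle$, where $r$ is the label of the node of $\Omega$ reached from the node $1$ along the path spelled by $p(g^{-1})$, so $d_g=|r|$; and by the connectivity in Corollary~\ref{cor:lengthDelta} every node is such an endpoint. Hence the set of orbit lengths is exactly the set of absolute values of node labels of $\Omega$, namely
\[
\left\{\,1,\ |m|\Bigl(\tfrac{l}{|n|}\Bigr)^{i},\ |n|\Bigl(\tfrac{l}{|m|}\Bigr)^{i},\ l\,\Bigl(\tfrac{l}{|m|}\Bigr)^{j}\Bigl(\tfrac{l}{|n|}\Bigr)^{i}\ \Big|\ i,j\in\N\,\right\},\qquad l=\lcm(m,n).
\]
(If an argument independent of Section~\ref{sec:compute} is preferred, each intersection can be computed directly from Britton's lemma by the push of Lemma~\ref{lem:intersection}.)

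It then remains to take the supernatural $\lcm$ prime by prime. Put $\mu=l/|m|=|n|/\gcd(m,n)$ and $\nu=l/|n|=|m|/\gcd(m,n)$, which are coprime. If $p\mid\mu$ or $p\mid\nu$ then the labels $|n|\mu^{i}$ or $|m|\nu^{i}$ have $p$-adic valuation tending to $\infty$, so $n_p=\infty$ and $Z_p\cong\mathbb Z_p$; these are precisely the primes with $v_p(m)\ne v_p(n)$. Every remaining prime dividing $l$ has $p\nmid\mu\nu$, whence each label has $p$-valuation at most $v_p(l)$ with this value attained, so $n_p=v_p(l)$; these are precisely the primes with $v_p(m)=v_p(n)\ge 1$, for which $v_p(l)=v_p(\gcd(m,n))$. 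Taking $V$ to be the product of the factors with $n_p=\infty$ gives $V\cong\prod\{\mathbb Z_p\mid p\mid \lcm(m,n)/m\text{ or }p\mid\lcm(m,n)/n\}$, which is open because the complementary factor is finite; that complementary factor is $\overline{\langle a\rangle}/V\cong\prod_{p:\,v_p(m)=v_p(n)\ge1}\mathbb Z/p^{v_p(\gcd(m,n))}\mathbb Z$, a cyclic group (its orders are pairwise coprime prime powers) whose order $\prod_{p} p^{v_p(\gcd(m,n))}$ divides $\gcd(m,n)$.

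The main obstacle is the first reduction rather than the arithmetic: one must be sure that $\overline{\langle a\rangle}$ is genuinely the full procyclic group of supernatural order $\sigma$ --- equivalently that a consistent choice of generators of the factors $\mathbb Z/d_g\mathbb Z$ exhibits $a$ as a topological generator with no relations beyond those forced by the $d_g$ --- and that one has correctly enumerated which node labels occur and to what $p$-power. The coprimality of $\mu$ and $\nu$ is what cleanly separates the infinite from the finite primes and makes the valuation bookkeeping routine.
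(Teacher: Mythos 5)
Your structure-theoretic reduction is sound, and it is a genuinely different (and in one respect more explicit) route than the paper's: you identify $\overline{\langle a\rangle}$ as the closure of the diagonal in $\prod_g \mathbb{Z}/d_g\mathbb{Z}$, hence as the procyclic group of supernatural order $\lcm\{d_g\}$, and then split prime by prime using the coprimality of $\mu=\lcm(m,n)/|m|$ and $\nu=\lcm(m,n)/|n|$; the paper instead quotes the monothetic structure theorem of Hewitt--Ross for Equation~(\ref{eq:product_descrption}) and reads off each $Z_p$ from the orbit lengths, and your supernatural-order bookkeeping makes precise a step the paper leaves somewhat implicit. The valuation arithmetic at the end is correct. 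The real divergence, however, is in how the orbit lengths are obtained, and that is where your proof has a genuine gap.

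You compute the orbit lengths by invoking Lemma~\ref{lem:travel} and Corollary~\ref{cor:lengthDelta}, but the graph $\Omega$ and those two results are established in Section~\ref{sec:compute} only under the standing hypothesis that \emph{neither $m$ nor $n$ divides the other}: the node labels of $\Omega$ are pairwise distinct only in that case (the paper says so explicitly), and when $n=mr$ they collapse --- e.g.\ $m(l/n)^i=\pm m$ for all $i$ --- so the relevant object there is the graph $\Lambda$, whose Lemma~\ref{lem:LambdaGraph} computes intersections with $\langle a^m\rangle$ rather than $\langle a\rangle$ and does not directly give orbit lengths. Thus your enumeration is unjustified exactly in the divisor case, e.g.\ $\mathrm{BS}(2,4)$ or $\mathrm{BS}(1,n)$, which satisfies $|m|\neq|n|$ and is covered by the proposition. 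The failure is not merely formal: when $|m|=1$, every subword $ta^ct^{-1}$ is a pinch, so your implicit step of realizing a path labelled $t^it^{-j}$ by a pinch-free word (say $t^iat^{-j}$, which works only when $|m|,|n|\geq 2$) is impossible, and connectivity of $\Omega$ cannot be converted into existence of the corresponding orbits. Your parenthetical fallback --- that the intersections "can be computed directly from Britton's lemma by the push of Lemma~\ref{lem:intersection}" --- is precisely what the paper's own proof does (pushing $a^d$ through a reduced, pinch-free coset representative and tracking which powers survive), and it is deliberately self-contained in Section~\ref{sec:local}, which assumes nothing from Section~\ref{sec:compute}; but in your write-up it is a gesture rather than an argument, and carrying it out is the bulk of the work. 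To repair the proof, either restrict the $\Omega$-based computation to the non-divisor case and treat the divisor case separately, or replace step one entirely by the direct Britton's-lemma computation of the $d_g$.
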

\begin{proof}
Let $w\langle a\rangle\neq \langle a \rangle$ be a coset and write $w=a^{\eta_1}t^{\epsilon_1}a^{\eta_2}t^{\epsilon_2}\dots a^{\eta_k} t^{\epsilon_k}$ with $\epsilon_i=\pm 1$ and $k>0$.
To determine the size  of the $\langle a\rangle$-orbit
of $w\langle a\rangle$, consider $d$ with $a^dw\langle a\rangle =w\langle a\rangle$. It will be useful to compute $e$ such that $a^dw=wa^e$. 
%Since iterating this identity yields $a^{dc}w=wa^{ec}$ for every $c\in \mathbb{Z}$, it may be supposed  for the following calculation that sufficiently high powers of $m$ and $n$ divide $d$.

The defining relation for $\mathrm{BS}(m,n)$ implies that if $a^dw=wa^e$ then 
$$
a^dw=a^{\eta_1}t^{\epsilon_1}a^{\eta_2}a^{d'}t^{\epsilon_2}\dots a^{\eta_k} t^{\epsilon_k},
$$ 
where $d'=d\frac{m}{n}$ or $d\frac{n}{m}$ depending on whether $\epsilon_1$ is equal to $1$ or $-1$. Continuing to push $d'$ past $t^{\epsilon_i}$ for $i=2$, \dots, $k$, we find that  
\begin{equation}
\label{eq:compute_e}
e = d\left(\frac{m}{n}\right)^\rho
 =d\left(\frac{\frac{\lcm(m,n)}{n}}{\frac{\lcm(m,n)}{m}}\right)^\rho,
 \end{equation}
 where $\rho$ is the $t$-exponent of $w$. Indeed, this calculation goes through provided that the $\gcd(m,n)$ and sufficiently high powers of $\frac{\lcm(m,n)}{n}$ and $\frac{\lcm(m,n)}{m}$ divide $d$. Hence there are $\langle a\rangle$-orbits with orders 
\begin{equation}
\label{eq:d}
 d = \gcd(m,n) \left(\frac{\lcm(m,n)}{m}\right)^r\left(\frac{\lcm(m,n)}{n}\right)^s
\end{equation}
 for some $r,s\in \mathbb{N}$. Since 
 $\frac{\lcm(m,n)}{n}$ and $\frac{\lcm(m,n)}{m}$ are relatively prime and the $t$-exponent of $w$ may take any integer value, it follows from (\ref{eq:compute_e}) that $r$ and $s$ may take arbitrarily high values. Therefore, if $p$ is a prime divisor of $\frac{\lcm(m,n)}{n}$ or $\frac{\lcm(m,n)}{m}$, then $Z_p$ in (\ref{eq:product_descrption}) is isomorphic to $\mathbb{Z}_p$. 
 
To see that all orbits have the orders given in (\ref{eq:d}), choose a coset representative $w$ that is freely reduced and has no pinches. Then the equation $w^{-1}a^dw=a^e$ implies that $t^{-\epsilon_1}a^d t^{\epsilon_1}$ is a pinch. Hence either $m$ or $n$ divides $d$, depending on whether $\epsilon_1$ equals $-1$ or $1$. Reduction of pinches continues for all $\epsilon_i$ provided that sufficient powers of 
 $\frac{\lcm(m,n)}{m}$ and $\frac{\lcm(m,n)}{n}$ divide $d$, thus showing that all orbits have orders as given in (\ref{eq:d}), as claimed. This implies that, if $p$ does not divide $\gcd(m,n)$, $\frac{\lcm(m,n)}{n}$ or $\frac{\lcm(m,n)}{m}$, then $Z_p$ in (\ref{eq:product_descrption}) is trivial, and that, if $p$ divides $\gcd(m,n)$ but not  $\frac{\lcm(m,n)}{n}$ or $\frac{\lcm(m,n)}{m}$, then $Z_p$ is finite cyclic with order dividing $\gcd(m,n)$.
\end{proof}

It follows from Proposition~\ref{prop:local_structure} that $G_{m,n}$ is a closed subgroup of a product of $p$-adic Lie groups with $p$ belonging to
$$
 \mathfrak{p} := \left\{ p\mid p\hbox{ is a prime divisor of }\frac{\lcm(m,n)}{n}\hbox{ or }\frac{\lcm(m,n)}{m}\right\}.
 $$ 
Hence $G_{m,n}$ belongs to the class ${\Bbb A}_{\mathfrak{p}}$ of groups defined in \cite{\HG}. It is shown in  \cite{\HG} that Hausdorff groups in the variety generated by $p$-adic Lie groups, for $p\in\mathfrak{p}$, may be approximated by groups in ${\Bbb A}_{\mathfrak{p}}$.

When the $t$-exponent of the coset representative  $w$ equals~$0$, the above calculations yield further information.
\begin{cor}
\label{cor:centralizer}
If the $t$-exponent of $w\in \mathrm{BS}(m,n)$ is equal to~$0$, then there is $d>0$ such that $w$ centralizes $\langle a^d\rangle$ and (the image under $\pi$ of) $w$ centralizes $\overline{\langle a^d\rangle}$.
\end{cor}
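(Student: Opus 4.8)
The plan is to leverage the computation already carried out in the proof of Proposition~\ref{prop:local_structure}, specialized to the case $\rho = 0$. The key observation is Equation~(\ref{eq:compute_e}): if $w$ has $t$-exponent $\rho$ and $a^dw = wa^e$ for a suitable $d$, then $e = d\,(m/n)^\rho$. When $\rho = 0$ this collapses to $e = d$, so the same element $a^d$ that $w$ conjugates from the left reappears on the right. Thus $w^{-1}a^dw = a^d$, i.e. $w$ centralizes $a^d$, and hence centralizes $\langle a^d\rangle$.

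First I would choose a coset representative for $w$ that is freely reduced and contains no pinches, as in the second half of the proof of Proposition~\ref{prop:local_structure}. Then I would invoke the divisibility condition established there: the equation $w^{-1}a^dw = a^e$ goes through (with successive pinch reductions $t^{-\epsilon_i}a^{(\cdot)}t^{\epsilon_i}$ legitimate at each stage) precisely when sufficiently high powers of $\lcm(m,n)/m$ and $\lcm(m,n)/n$, together with $\gcd(m,n)$, divide $d$. Concretely, taking $d$ to be the order of the $\langle a\rangle$-orbit of $w\langle a\rangle$ given by Equation~(\ref{eq:d}) guarantees all the required pinches exist, and with $\rho=0$ Equation~(\ref{eq:compute_e}) forces $e=d$. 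This establishes that $w$ centralizes $a^d$ at the level of the abstract group $\mathrm{BS}(m,n)$, hence centralizes the cyclic subgroup $\langle a^d\rangle$.

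The second claim, that (the image of) $w$ centralizes the closure $\overline{\langle a^d\rangle}$, follows by a continuity argument. Conjugation by a fixed element $w$ is a homeomorphism of $G_{m,n}$, so the set $\{x \in G_{m,n} \mid w^{-1}xw = x\}$ is closed. Since it contains $\langle a^d\rangle$, it contains the closure $\overline{\langle a^d\rangle}$. Thus $w^{-1}\overline{\langle a^d\rangle}\,w = \overline{\langle a^d\rangle}$ elementwise, which is exactly the assertion.

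The main obstacle is bookkeeping rather than conceptual: one must verify that the choice of $d$ makes every intermediate pinch reduction valid simultaneously, so that the clean identity $e = d(m/n)^\rho$ genuinely holds for this $d$. This is already implicit in the proof of Proposition~\ref{prop:local_structure}, where the orbit orders in Equation~(\ref{eq:d}) are shown to be exactly the values of $d$ for which the full chain of reductions succeeds; the corollary simply reads off the $\rho = 0$ case of that analysis. The continuity step is then routine, given that $\overline{\langle a^d\rangle}$ is by definition the closure of $\langle a^d\rangle$ and centralizing is a closed condition.
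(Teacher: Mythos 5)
Your proof is correct and is essentially the paper's own argument: the corollary is obtained there precisely by specializing the computation $e = d\,(m/n)^{\rho}$ from the proof of Proposition~\ref{prop:local_structure} to $\rho = 0$, with $d$ chosen divisible enough for the chain of pinch reductions, which is exactly what you do. The passage from $\langle a^d\rangle$ to $\overline{\langle a^d\rangle}$ via the closedness of the centralizer of $w$ is the routine step the paper leaves implicit, and you have filled it in correctly.
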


Using this description we now revisit Corollary \ref{cor:texpmap} where the notion of $t$-exponent sum was extended to $G_{m,n}$.
Let $x\in G_{m,n}$.

Recall that every locally compact group admits a (left-invariant) Haar measure, which is finitely additive,  invariant
under left translation by group elements, and unique up to rescaling. Let $\mu$ be such a measure on $G_{m,n}$, scaled so that 
$\mu(\overline{\langle a\rangle})=1$. Then $\mu(\overline{\langle a^j\rangle})=\frac1{j}$ for any positive integer $j$ since the measure is translation invariant (so all cosets have the same measure) and finitely additive.
The modular function
 $\Delta:G\rightarrow \mathbb R^+$ is defined as follows. For any $g\in G$ and compact set $A\subseteq G$,  $\mu(Ag)=\Delta(g)\mu(A)$.
In the case that $G$ is totally disconnected, as mentioned in Section \ref{sec:flatRank}, the second author showed that the modular function always takes rational values.
Since $\Delta$ is a continuous map from $G_{m,n}$ to $\mathbb Q^+$, and $\mathrm{BS}(m,n)$ is dense in $G_{m,n}$, there exists some  $\tilde{x}$ in $\mathrm{BS}(m,n)$ such that $\Delta(x)=\Delta(\tilde{x})$.
Then $\tilde{x}=t^{\rho}w$ where $w$ is a word of zero $t$-exponent sum.
Since $\Delta$ is a homomorphism, $\Delta(\tilde{x})=(\Delta(t))^{\rho}\Delta(w)$.

By Corollary \ref{cor:centralizer} the word $w$ centralizes a compact open subgroup, and so $\Delta(w)=1$. 
Since $\mu$ is left invariant,  \[\mu(t^{-1}\overline{\langle a^n\rangle})=\mu(\overline{\langle a^n\rangle})=\frac1{|n|}.\] Translating the compact open subgroup  $t^{-1}\overline{\langle a^n\rangle}$ by $t$ on the right we have 
\[\mu(t^{-1}\overline{\langle a^n\rangle}t)=\mu(\overline{\langle a^m\rangle})=\frac1{|m|}.\] It follows that \[\frac1{|n|}=\mu(t^{-1}\overline{\langle a^n\rangle})=\Delta(t)\mu(t^{-1}\overline{\langle a^n\rangle}t)=\Delta(t)\frac1{|m|}\] and so 
 $\Delta(t)=\left|\frac{m}{n}\right|$.

Putting this together we have $\Delta(x)=\Delta(\tilde{x})=(\Delta(t))^{\rho}\Delta(w)=\left|\frac{m}{n}\right|^{\rho}$, and so we may define the function $\phi:G_{m,n}\rightarrow \mathbb Z$ as in the proof of Corollary   \ref{cor:texpmap} which extends the  $t$-exponent sum to all of $G_{m,n}$. 
%The   kernel of $\Delta$ is the set of all elements of $G_{m,n}$ of zero $t$-exponent.

The \emph{quasi-centre}, $\hbox{QZ}(G)$, of the topological group $G$ is defined in \cite{\BurgerMozes} to be set of elements of $G$ that centralize an open subgroup of $G$. Since the intersection of any two open subgroups is an open subgroup and the set of open subgroups is invariant under automorphisms, $\hbox{QZ}(G)$ is a characteristic subgroup of $G$. It is typically not closed, but when $G$ has an open abelian subgroup it is open.

\begin{cor}
\label{cor:quasi-centre}
The quasi-centre of $G_{m,n}$ is equal to $\ker \Delta$. 
\end{cor}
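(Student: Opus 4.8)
The plan is to establish the two inclusions $\mathrm{QZ}(G_{m,n})\subseteq \ker\Delta$ and $\ker\Delta\subseteq \mathrm{QZ}(G_{m,n})$ separately. The starting observation is that, because $|m|\neq|n|$ we have $|m/n|\neq1$, so $\Delta(x)=\left|\tfrac{m}{n}\right|^{\rho}=1$ holds precisely when the homomorphism $\phi$ of Corollary \ref{cor:texpmap} vanishes at $x$; that is, $\ker\Delta=\ker\phi$. Since $\phi$ is continuous with discrete target $\mathbb Z$, its kernel is an \emph{open}, and therefore also closed, subgroup of $G_{m,n}$. Keeping this description of $\ker\Delta$ in mind will be what lets density arguments interact with the quasi-centre.

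For the easy inclusion $\mathrm{QZ}(G_{m,n})\subseteq\ker\Delta$, suppose $x$ centralises an open subgroup $U$. By van Dantzig's theorem $U$ contains a compact open subgroup $W$, which $x$ then also centralises, so in particular $x^{-1}Wx=W$. Proposition \ref{prop:scale_properties}$(v)$ then gives $s(x)=s(x^{-1})=1$, whence $\Delta(x)=s(x)/s(x^{-1})=1$ and $x\in\ker\Delta$.

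For the reverse inclusion the plan is to combine Corollary \ref{cor:centralizer} with density and the openness of the quasi-centre. Since $\overline{\langle a\rangle}$ is a compact, open, abelian subgroup (abelian by the product description (\ref{eq:product_descrption})), the group $G_{m,n}$ has an open abelian subgroup, so by the remark preceding the corollary $\mathrm{QZ}(G_{m,n})$ is open, and open subgroups are closed. Corollary \ref{cor:centralizer} shows that every $w\in\mathrm{BS}(m,n)$ of $t$-exponent $0$ centralises some $\overline{\langle a^d\rangle}$; as $\overline{\langle a^d\rangle}$ has index at most $d$ in the compact group $\overline{\langle a\rangle}$ it is open, so each such $w$ lies in $\mathrm{QZ}(G_{m,n})$. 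Thus $\ker\Delta\cap\mathrm{BS}(m,n)\subseteq\mathrm{QZ}(G_{m,n})$. Because $\ker\Delta$ is open and $\mathrm{BS}(m,n)$ is dense, the set $\ker\Delta\cap\mathrm{BS}(m,n)$ is dense in $\ker\Delta$; since $\mathrm{QZ}(G_{m,n})$ is closed it must contain the closure of this dense set, namely all of $\ker\Delta$. Together the inclusions give $\mathrm{QZ}(G_{m,n})=\ker\Delta$.

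The main obstacle is the passage from the combinatorial content of Corollary \ref{cor:centralizer}, which only controls honest group elements $w\in\mathrm{BS}(m,n)$ for which a normal form is available, to arbitrary $x\in\ker\Delta\subseteq G_{m,n}$, where it is not. The device that overcomes this is precisely the openness of $\mathrm{QZ}(G_{m,n})$: a closed subgroup containing a dense subset of $\ker\Delta$ must contain $\ker\Delta$ itself. This is why verifying that $G_{m,n}$ has an open abelian subgroup is a genuine preliminary rather than a convenience; the only routine point left to check along the way is that each $\overline{\langle a^d\rangle}$ is open, which follows from its having finite index in $\overline{\langle a\rangle}$.
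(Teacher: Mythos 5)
Your proof is correct and follows essentially the same route as the paper's: both directions rest on the unimodularity of quasi-centre elements (via Proposition \ref{prop:scale_properties}$(v)$) for $\mathrm{QZ}(G_{m,n})\subseteq\ker\Delta$, and on Corollary \ref{cor:centralizer} together with the open abelian subgroup $\overline{\langle a\rangle}$ for the reverse inclusion. Your density-plus-closedness argument is just a more explicit rendering of the paper's terse claim that the smallest subgroup containing $\overline{\langle a\rangle}$ and the $t$-exponent-$0$ elements of $\mathrm{BS}(m,n)$ is $\ker\Delta$.
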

\begin{proof}
Since $\overline{\langle a\rangle}$ is abelian, it is contained in $\hbox{QZ}(G_{m,n})$. The smallest subgroup of $G_{m,n}$ containing $\overline{\langle a\rangle}$ and the image under $\pi$ of all elements of  $\mathrm{BS}(m,n)$ having $t$-exponent~$0$ is  $\ker \Delta$. Hence $\hbox{QZ}(G_{m,n})$ contains this subgroup. The reverse inclusion holds because every element of the quasi-centre is unimodular. 
\end{proof}

Computation of the scale of $x\in G_{m,n}$ may now be carried out more directly with the aid of the relevant structure theory as described in \cite{\WillStructure}. Recall in particular that the compact, open subgroup, $V$ is \emph{tidy} for $x$ if, setting $V_+ = \bigcap_{k\geq0} x^{ k}Vx^{- k}$ and $V_- = \bigcap_{k\geq0} x^{-k}Vx^{k}$, we have
$$
V = V_+V_-\ \hbox{ and }\ \bigcup_{k\geq0} x^kV_+x^{-k} \hbox{ is closed}.
$$
When $V$ is tidy for $x$ we have $s(x) = [xV_+x^{-1} : V_+]$  (and so tidy subgroups are minimising).

\begin{prop}
\label{prop:scale_again}
Let $x\in G_{m,n}$ and denote the $t$-exponent of $x$ by $\rho$. Then there is a compact, open subgroup, $V$, tidy for $x$ with $V = V_+V_-$ where
\begin{eqnarray*}
V_+ &\cong& \prod\left\{ \mathbb{Z}_p \mid p\hbox{ is a prime divisor of }\lcm(m,n)/{n}\right\}\\
\hbox{ and }\  \  V_- &\cong& \prod\left\{ \mathbb{Z}_p \mid p\hbox{ is a prime divisor of }\lcm(m,n)/{m}\right\}
\end{eqnarray*}
if $\rho$ is positive and  {\it vice versa\/} if $\rho$ is negative.
The scale of $x$ is
$$
s(x) = 
\begin{cases}
\left(\frac{\lcm(m,n)}{|n|}\right)^\rho, & \hbox{ if }\rho\geq0\\
\left(\frac{\lcm(m,n)}{|m|}\right)^{-\rho}, & \hbox{ if }\rho\leq0
\end{cases}.
$$
\end{prop}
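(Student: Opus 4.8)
The plan is to exhibit the compact open subgroup $V\le\overline{\langle a\rangle}$ supplied by Proposition~\ref{prop:local_structure}, namely $V\cong\prod_{p\in\mathfrak p}\mathbb Z_p$ with $\mathfrak p=\{p\mid p\text{ divides }\lcm(m,n)/m\text{ or }\lcm(m,n)/n\}$, and to show it is tidy for $x$ by computing $V_+$ and $V_-$ factor by factor. First I would dispose of the case $\rho=0$: by Corollary~\ref{cor:quasi-centre} such an $x$ lies in the quasi-centre, so it centralizes a compact open subgroup $V$, whence $V_+=V_-=V$, the set $\bigcup_{k\ge0} x^kV_+x^{-k}=V$ is trivially closed, $V$ is tidy, and $s(x)=[xV_+x^{-1}:V_+]=1$, matching both branches of the formula. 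By Proposition~\ref{prop:scale_properties}(iii) a subgroup tidy for $x$ is tidy for $x^{-1}$ (with the roles of $V_+$ and $V_-$ interchanged), so it suffices to treat $\rho>0$ and read off the case $\rho<0$ by applying the result to $x^{-1}$.

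The heart of the argument is the action of conjugation by $x$ on the factors of $V$. Iterating the defining relation gives $t^{\rho}a^{m^{\rho}}t^{-\rho}=a^{n^{\rho}}$, so on $\overline{\langle a\rangle}$ conjugation by $t^{\rho}$ is ``multiplication by $(n/m)^{\rho}$'' in the sense that it carries $\overline{\langle a^{m^{\rho}}\rangle}$ isomorphically onto $\overline{\langle a^{n^{\rho}}\rangle}$; in the $p$-adic factor this is multiplication by a scalar of valuation $\rho(v_p(n)-v_p(m))$. Since $\lcm(m,n)/m$ and $\lcm(m,n)/n$ are coprime, each $p\in\mathfrak p$ divides exactly one of them: for $p\mid\lcm(m,n)/n$ the scalar has negative valuation (the \emph{expanding} directions) and for $p\mid\lcm(m,n)/m$ it has positive valuation (the \emph{contracting} directions). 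A direct computation of $V_+=\{v\in V:x^{-k}vx^{k}\in V\ \forall k\ge0\}$ then gives, factorwise, $V_+^{(p)}\cong\mathbb Z_p$ on the expanding primes and $V_+^{(p)}=\{0\}$ on the contracting ones, and symmetrically for $V_-$; this yields the stated isomorphisms and the equality $V=V_+V_-$, since on each factor exactly one of $V_+^{(p)},V_-^{(p)}$ is the whole factor and the other is trivial.

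It remains to verify the second tidiness condition, that $V_{++}:=\bigcup_{k\ge0}x^{k}V_+x^{-k}$ is closed. Here $V_+\subseteq xV_+x^{-1}$ because $V_+=\bigcap_{k\ge0}x^{k}Vx^{-k}$, so $V_{++}$ is an increasing union in which each $x^{k}V_+x^{-k}$ is compact and open; hence $V_{++}$ is locally compact in the subspace topology, and a locally compact subgroup of a Hausdorff topological group is closed. With both conditions checked, $V$ is tidy for $x$ and $s(x)=[xV_+x^{-1}:V_+]$. Computing this index factor by factor over the expanding primes, the factor at $p$ contributes $p^{\rho(v_p(m)-v_p(n))}=p^{\rho\,v_p(\lcm(m,n)/n)}$, so the product is $(\lcm(m,n)/|n|)^{\rho}$, as claimed, and $(\lcm(m,n)/|m|)^{-\rho}$ for $\rho<0$ after passing to $x^{-1}$.

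The step I expect to be the main obstacle is making the multiplication-by-$(n/m)^{\rho}$ picture rigorous for an \emph{arbitrary} $x\in G_{m,n}$ rather than for $x=t^{\rho}$. Conjugation by $t$ does not preserve $\overline{\langle a\rangle}$ (it sends it to the commensurable but distinct subgroup $\overline{\langle tat^{-1}\rangle}$), and a general $x$ differs from $t^{\rho}$ by a factor of $t$-exponent $0$. Controlling that factor is exactly what Corollaries~\ref{cor:centralizer} and~\ref{cor:quasi-centre} are for: the $t$-exponent-zero part of $x$ centralizes a compact open subgroup, so after shrinking $V$ (which does not change the isomorphism type $p^{c}\mathbb Z_p\cong\mathbb Z_p$ of the factors) conjugation by $x$ agrees with conjugation by $t^{\rho}$ on the range relevant to the intersections defining $V_\pm$. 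Choosing $V$ so that this holds simultaneously in the expanding and contracting directions, and tracking the valuations so that $V=V_+V_-$ comes out exactly, is the delicate part of the proof.
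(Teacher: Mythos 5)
Most of your argument is the paper's own proof in different words: the paper likewise writes $x=t^{\rho}w$ with $w$ of $t$-exponent zero, invokes Corollary \ref{cor:quasi-centre} to find $V=\overline{\langle a^{d}\rangle}$ centralised by $w$, uses Proposition \ref{prop:local_structure} to put $V$ in the form $\prod_{p\in\mathfrak{p}}\mathbb{Z}_p$, and then computes $V_{\pm}$ and $[xV_{+}x^{-1}:V_{+}]$ factor by factor from the fact that conjugation by $x$ acts as multiplication by $(n/m)^{\rho}$; your handling of $\rho=0$ and the reduction of $\rho<0$ to $\rho>0$ are also fine. The genuine gap is your verification of the second tidiness condition, that $V_{++}=\bigcup_{k\geq0}x^{k}V_{+}x^{-k}$ is closed. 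Two things go wrong. First, the subgroups $x^{k}V_{+}x^{-k}$ are compact but \emph{not} open: for $\rho>0$ the group $V_{+}$ has trivial component in every factor $\mathbb{Z}_p$ with $p\mid\lcm(m,n)/m$, so whenever $\lcm(m,n)/|m|>1$ (i.e.\ unless $|n|$ divides $|m|$) it is a closed subgroup of infinite index in the compact group $V$, and open subgroups of compact groups have finite index. Second, and more seriously, the principle you rely on --- an increasing union of compact subgroups is locally compact, hence closed --- is false, and it \emph{must} be false, because condition TA does not imply condition TB. For a counterexample inside the tidiness framework itself, take $G=F^{\mathbb{Z}}\rtimes\mathbb{Z}$ with $F$ finite and nontrivial, $x$ the shift, and $V=\{f\in F^{\mathbb{Z}} : f(0)=\ident\}$. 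Then $V_{+}=\{f : f(k)=\ident\hbox{ for all }k\geq 0\}$ and $V_{-}=\{f : f(k)=\ident\hbox{ for all }k\leq 0\}$, so $V=V_{+}V_{-}$ and TA holds; but $V_{++}=\{f : f(k)=\ident\hbox{ for all sufficiently large }k\}$ is a proper dense subgroup of $F^{\mathbb{Z}}$, hence neither closed nor locally compact, even though it is an increasing union of the compact subgroups $x^{k}V_{+}x^{-k}$. (Consistently, this $V$ is not minimising: $[xV_{+}x^{-1}:V_{+}]=|F|$ while $s(x)=1$.) So no argument of the purely formal shape you give can establish TB; the specific structure of your $V$ has to enter at exactly this point.

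The gap is repairable. The paper's route is to observe that, since $\rho\neq 0$, conjugation by $x$ has nonzero valuation on every factor of $V$, so $\bigcap_{k\in\mathbb{Z}}x^{k}Vx^{-k}$ is trivial, and then to quote \cite[Lemma~3.31(3)]{\BaWi} to conclude tidiness. Alternatively, staying within your set-up, use the coordinate picture to prove that $V_{++}\cap V=V_{+}$: each conjugate $x^{k}V_{+}x^{-k}$ differs from $V_{+}$ only in the factors with $p\mid\lcm(m,n)/n$, where it is an overgroup of $\mathbb{Z}_p$ whose intersection with $\mathbb{Z}_p$ is again $\mathbb{Z}_p$, so intersecting with $V$ returns exactly $V_{+}$. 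Since $V$ is open in $G_{m,n}$, this exhibits $V_{+}$ as a compact, relatively open neighbourhood of the identity in $V_{++}$; only then is $V_{++}$ locally compact, and only then does your (correct) closing fact that locally compact subgroups of Hausdorff groups are closed finish the proof.
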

\begin{proof}
Write $x=t^{\rho}(t^{-\rho}x)= t^{\rho} w$ where $w\in G_{m,n}$ is in $\ker\Delta$. 
%Corollary~\ref{cor:texpmap} implies that $x = t^\rho w$ where $w\in \ker\Delta$. Then, b
By Corollary~\ref{cor:quasi-centre}, there is an open subgroup $V\leq \overline{\langle a\rangle}$ that is centralised by $w$. It follows from Proposition~\ref{prop:local_structure} and \cite[Lemma~1]{\WillStructure} that we may, by passing to a subgroup if necessary, suppose that  
\begin{equation}
\label{eq:Visom}
V \cong \prod\left\{ \mathbb{Z}_p \mid p\hbox{ is a prime divisor of }\lcm(m,n)/{m}\hbox{ or }\lcm(m,n)/{n}\right\}
\end{equation}
and $V = V_+V_-$. Note that, since $V$ is an open subgroup of $\overline{\langle a \rangle}$, there is $d>0$ such that $V = \overline{\langle a^d\rangle}$ and it may also be supposed that $m$, $n$ and $(n/m)^\rho$ divide $d$.

Since $w$ centralises $V$, we have $xVx^{-1} = t^\rho Vt^{-\rho}$. Hence 
$$
 xVx^{-1} = \overline{\langle t^\rho a^d t^{-\rho}\rangle} = \overline{\langle a^e \rangle}
 \hbox{ where }e = d({\textstyle \frac{n}{m}})^\rho.
 $$ 
Then under the isomorphism (\ref{eq:Visom}) of $V$ with the product of groups of $p$-adic integers, which converts multiplicative notation to additive, conjugation by $x$ multiplies each factor $\mathbb{Z}_p$ by $({\textstyle \frac{m}{n}})^\rho$. It follows that, if $\rho$ is positive, conjugation by $x$ expands the factor $\mathbb{Z}_p$ if $p$ divides $\frac{\lcm(m,n)}{n}$ and contracts it if $p$ divides $\frac{\lcm(m,n)}{m}$, and {\it vice versa\/} if $\rho$ is negative. Therefore $V_+$ and $V_-$ are as claimed. It further follows that $\bigcap_{k\in\mathbb{Z}} x^kVx^{-k}$ is trivial whence, by \cite[Lemma~3.31(3)]{\BaWi}, $V$ is tidy for $x$. Therefore 
$s(x) = [xV_+x^{-1} : V_+]$ and has the claimed values.
\end{proof}

%\section{Acknowledgements} Thanks to Mathieu Carette and Yves de Cornulier for suggestions and corrections to earlier drafts.

  \bibliography{refs} \bibliographystyle{plain}
    
      \end{document}